\def\@tocline#1#2#3#4#5#6#7{\relax
  \ifnum #1>\c@tocdepth % then omit
  \else
    \par \addpenalty\@secpenalty\addvspace{#2}%
    \begingroup \hyphenpenalty\@M
    \@ifempty{#4}{%
      \@tempdima\csname r@tocindent\number#1\endcsname\relax
    }{%
      \@tempdima#4\relax
    }%
    \parindent\z@ \leftskip#3\relax \advance\leftskip\@tempdima\relax
    \rightskip\@pnumwidth plus4em \parfillskip-\@pnumwidth
    #5\leavevmode\hskip-\@tempdima
      \ifcase #1
       \or\or \hskip 1em \or \hskip 2em \else \hskip 3em \fi%
      #6\nobreak\relax
   % \dotfill
   \hfill \hbox to\@pnumwidth{\@tocpagenum{#7}}\par
    \nobreak
    \endgroup
  \fi}
\newtheorem{thm}{Theorem}
\newtheorem*{thm*}{Theorem}
\newtheorem{lem}{Lemma}
\newtheorem{prop}{Proposition}
\newtheorem{cor}{Corollary}
\newtheorem{thm&defn}[thm]{Theorem \& Definition}
\newtheorem{defn}{Definition}
\newenvironment{customthm}[1]
  {\innercustomthm}
  {\endinnercustomthm}
\theoremstyle{remark}
\newcommand{\Z}{\mathbb Z}
\newcommand{\N}{\mathbb N}
\newcommand{\A}{\mathbb A}
\newcommand{\p}{\mathbb P}
\newcommand{\oo}{\mathscr O}
\newcommand{\spec}{{\rm Spec\,}}
\newcommand{\SL}{{\rm SL}}
\newcommand{\ob}{{\rm ob}}
\newcommand{\gr}{{\rm gr}}
\newcommand{\grass}{{\rm Grass}}
\newcommand{\quot}{{\rm Quot}}
\newcommand{\Id}{{\rm id}}
\newcommand{\id}{{\rm id}}
\newcommand{\im}{{\rm im\,}}
\newcommand{\coker}{{\rm coker\,}}
\newcommand{\Hom}{{\rm Hom}}
\newcommand{\HOM}{\mathcal Hom}
\newcommand{\ext}{{\rm ext}}
\newcommand{\Ext}{{\rm Ext}}
\newcommand{\pprime}{\prime\prime}
\author{Yinbang Lin}
\begin{document}
\title{Moduli spaces of stable pairs}

%\thanks{Department of Mathematics, Northeastern University}

\subjclass[2010]{Primary: 14D20. Secondary: 14J60, 14N35}
%\keywords{moduli space, stable pair, deformation, obstruction, virtual fundamental class}

\maketitle

\begin{abstract}
We construct a moduli space of stable pairs over a smooth projective variety, parametrizing morphisms from 
a fixed coherent sheaf to a varying sheaf of fixed topological type, subject to a stability condition. 
This generalizes the notion used by Pandharipande and Thomas, following Le Potier,
where the fixed sheaf is the structure sheaf of the variety.  We then describe the relevant 
deformation and obstruction theories. We also show the existence of the virtual fundamental class in special cases.
\end{abstract}

\tableofcontents

\section{Introduction}
\vskip15pt
The past couple of decades of research have highlighted the importance of moduli spaces of {\it decorated} sheaves, which are sheaves with additional structure, such as one or more sections. Moduli spaces of rank two vector bundles with a section on a Riemann surface $X$, 
\[E\to X \quad\mbox{and}\quad \alpha:\oo_{X}\to E\] 
 were used in \cite{MR1273268} to deduce an important invariant of the moduli space of sheaves, the Verlinde number. More recently, Pandharipande and Thomas \cite{pandharipande-thomasi, pandharipande-thomasiii} %one more reference
studied stable pairs $(E,\alpha)$, where $E$ is a sheaf of dimension one, on a Calabi-Yau threefold. They showed that invariants of this moduli space are closely related to the Gromov-Witten invariants of the Calabi-Yau threefold. 

We would like to broaden our perspective and replace the structure sheaf by a general coherent sheaf. Subject to a stability condition, we would like to parametrize morphisms of coherent sheaves,
\[\alpha:E_0\to E,\]
where $E_0$ is a fixed coherent sheaf. We will denote such a morphism as a pair
\begin{equation*}(E,\alpha).\end{equation*}

Let us set up the problem. We will work over an algebraically closed field $k$ of characteristic $0$. We denote by $X$ a smooth projective variety of dimension $n$, with a fixed polarization $\oo_{X}(1)$. We fix a coherent sheaf $E_{0}$ on $X$. 
Let $P$ be a fixed polynomial of degree $d\leq n$. 
Let $\delta\in \mathbb Q[m]$ be another polynomial with a nonnegative leading coefficient; this will play the role of parameter for stability conditions. 

When $\delta$ is large, i.e. $\deg\delta\geq \deg P$, a pair $(E,\alpha)$, such that the Hilbert polynomial of $E$ equals $P$, is {\it stable} if $E$ is pure and the support of $\coker\alpha$ has dimension strictly smaller than $d$. 
This is the most significant case geometrically.
In this case, the moduli space of stable pairs is similar to Grothendieck's Quot scheme. 
But intersection theory on the moduli space of stable pairs is expected to be more tractable than that on the Quot scheme. 
This is because we impose the purity condition on the sheaves underlying stable pairs, which allows us to avoid some large dimensional components.

The moduli space of stable pairs in the large $\delta$ case is expected to have interesting applications to the enumerative geometry of higher rank sheaves on a surface $X$. 
In particular, a potential application is towards the strange duality conjecture. The conjecture over curves was proved \cite{MR2350055, MR2289865} by studying intersection theory on related Grassmannians and Quot schemes. It is reasonable to expect that a similar method using the moduli space of stable pairs will work for the surface case.

The study of stable pairs by Pandharipande and Thomas was built on Le Potier's work \cite{LePotier93} on coherent systems. The moduli space of coherent systems was also used to study the Donaldson numbers of the moduli space of sheaves \cite{MR1644040}. A {\it coherent system} on $X$ is a pair $(\Gamma, E)$, where  $E$ is a coherent sheaf and $\Gamma\leq H^{0}(X,E)$ is a subspace of global sections. 
 A pair $(E,\alpha: \oo_{X}\to E)$ can be viewed as a coherent system $(k\langle \alpha\rangle, E)$. However, when $\oo_{X}$ is replaced by, for example, $\oo_{X}^{\oplus 2}$, the pair can no longer be viewed as a coherent system, because the map 
 \[H^{0}(\alpha):k^{\oplus2}\to H^{0}(E)\]
  may not be injective. Aside from this issue, there is yet another difference between pairs and coherent systems: while the morphism $\alpha$ is part of the data of the pair, the coherent system only remembers the image of $H^{0}(\alpha)$. Consequently, when one tries to parametrize $\alpha:E_{0}\to E$ for general $E_{0}$, Le Potier's construction does not automatically apply.
 But the main ingredients of constructing the moduli space remain the same: Grothendieck's Quot scheme~\cite{MR1611822} and Mumford's geometric invariant theory~\cite{MR1304906}.
 
 We have
 \newpage

\begin{thm}[Existence of Moduli Spaces]
\label{MainThm}For the moduli functor $\mathcal S_{X}(P,\delta)$ of S-equivalence classes of $\delta$-semistable pairs, there exists a projective coarse moduli space $S_X(P,\delta)$. The moduli functor $\mathcal S_X^s(P,\delta)$ of equivalence classes of $\delta$-stable pairs is represented by an open subscheme $S_X^s(P,\delta)$ of $S_X(P,\delta)$.\end{thm}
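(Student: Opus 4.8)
The plan is to construct $S_X(P,\delta)$ as a GIT quotient, following the pattern of Gieseker, Maruyama and Simpson for sheaves and its adaptation to decorated sheaves by Le Potier and by Huybrechts--Lehn. First I would establish that the sheaves $E$ underlying $\delta$-semistable pairs $(E,\alpha)$ with Hilbert polynomial $P$ form a bounded family: the $\delta$-semistability inequality bounds the reduced Hilbert polynomials of saturated subsheaves of $E$, so a Grothendieck--Simpson estimate on $h^0$ of twists yields boundedness. This lets me fix $m\gg 0$ so that for every such $E$ the twist $E(m)$ is globally generated with vanishing higher cohomology and $h^0(E(m))=P(m)$. Writing $V=k^{P(m)}$ and $\mathcal H=V\otimes\oo_X(-m)$, every such $E$ then arises as a quotient $q\colon\mathcal H\to E$ inducing an isomorphism $V\cong H^0(E(m))$.

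Next I would build the parameter space. A pair is encoded by the single morphism $(\alpha,q)\colon E_0\oplus\mathcal H\to E$, which is surjective precisely because $q$ is, so pairs become points of the Quot scheme $\quot(E_0\oplus\mathcal H,P)$. Let $Q$ be the subscheme where the restriction to $\mathcal H$ is surjective and induces $V\cong H^0(E(m))$; the universal quotient restricts to a universal pair on $Q\times X$, and $\GL(V)$ acts on $Q$, trivially on the summand $E_0$ and through $V$ on $\mathcal H$, with orbits equal to isomorphism classes of pairs. Crucially the central scalars act nontrivially here, since they rescale $q$ but fix $\alpha$; this is what allows stable pairs to be rigid. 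Grothendieck's embedding of the Quot scheme into a Grassmannian for $\ell\gg 0$ supplies $\GL(V)$-linearized ample line bundles, and combining the linearization recording $E$ with one weighted by $\delta$ and recording the section produces a linearization $L_\delta$ whose GIT (semi)stability I want to identify with $\delta$-(semi)stability of pairs.

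The heart of the proof, and the step I expect to be the main obstacle, is this identification. I would run the Hilbert--Mumford criterion on $Q$: a one-parameter subgroup of $\GL(V)$ amounts to a weighted filtration of $V$ and hence, through the rigidification, to a filtration of $E$ by subsheaves $E'$. The task is to compute the limit weight $\mu(\lambda)$ as a combination of the Hilbert polynomials $P_{E'}$ together with the correction term from $\delta$ that records whether $\im\alpha\subseteq E'$, and to check that the resulting numerical inequality is \emph{exactly} the defining inequality of $\delta$-(semi)stability for pairs. The delicate point is uniform control, over the bounded family and for $\ell\gg m$, of the discrepancy between $h^0(E'(\ell))$ and $P_{E'}(\ell)$, so that the leading coefficients of the GIT weight and of the sheaf-theoretic inequality match; this is where the choices of $m$, $\ell$ and the weighting of $L_\delta$ must be calibrated against one another.

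Granting the identification, I would form the GIT quotient. Standard arguments show that every $L_\delta$-semistable point of the ambient Quot scheme already satisfies the defining conditions of $Q$, so $Q^{ss}$ is the full semistable locus and, sitting as a closed invariant subset of the semistable locus of a projective scheme, its quotient $S_X(P,\delta):=Q^{ss}/\!\!/\GL(V)$ is projective. Closed orbits in $Q^{ss}$ correspond to polystable pairs, so the points of the quotient parametrize S-equivalence classes, giving the coarse moduli space; the universal property follows from the local universal property of $Q$ together with this orbit description. Finally, on the stable locus the stabilizers are trivial, because a $\delta$-stable pair has no nontrivial automorphisms, so $\GL(V)$ acts freely, the geometric quotient $S_X^s(P,\delta)=Q^s/\GL(V)$ exists as an open subscheme, and the universal pair descends to show that it represents $\mathcal S_X^s(P,\delta)$.
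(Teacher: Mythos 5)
Your route follows the same Gieseker--Simpson--Le Potier GIT paradigm as the paper, but through a genuinely different parameter space. The paper rigidifies a pair as a point $([\sigma],[q])$ of $\p(\Hom(H^0(E_0(m)),V))\times \quot_X^P(V\otimes\oo_X(-m))$, acts by $\SL(V)$, and linearizes by $\oo_\p(n_1)\boxtimes\oo_Q(n_2)$, with the ratio $n_1/n_2$ carrying the stability data; you instead encode a pair as a single quotient $(\alpha,q)\colon E_0\oplus V\otimes\oo_X(-m)\twoheadrightarrow E$, i.e.\ a point of $\quot_X^P(E_0\oplus V\otimes\oo_X(-m))$, and act by $\GL(V)$ --- essentially Stoppa--Thomas's GIT construction of Pandharipande--Thomas spaces generalized to arbitrary $E_0$. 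Your observation that the center of $\GL(V)$ acts nontrivially (rescaling $q$ but not $\alpha$) is correct and buys something real: stabilizers of stable points are trivial, rather than the finite central stabilizers in the paper's $\SL(V)$ picture, so the universal quotient descends over the stable locus without the paper's twist by $\oo_\p(1)$; moreover your ambient space is already projective, so no closure $\bar Z$ and no analysis of boundary points of a closure is needed. What the paper's two-factor space buys in exchange is an explicit handle on the $\delta$-dependence of the linearization, which the introduction stresses is what master-space/wall-crossing applications require. One mechanism you leave implicit: on a single Quot scheme the Grassmannian embedding supplies only one $\GL(V)$-linearized ample class, so your ``$L_\delta$'' must be produced by twisting that class by a (rational multiple of the) determinant character of $\GL(V)$; without saying this, there is no $\delta$ anywhere in your linearization.

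The one genuine gap is the large-$\delta$ case, $\deg\delta\geq\deg P$, which is precisely the case this theorem is mainly asserting (for $\deg\delta<\deg P$ the statement is Wandel's). A linearization ``weighted by $\delta$'' cannot be taken literally there: in the paper's small-$\delta$ choice $n_1/n_2=P(l)\delta(m)/(P(m)+\delta(m))-\delta(l)P(m)/(P(m)+\delta(m))$, it is exactly the hypothesis $\deg\delta<\deg P$ that makes this ratio positive for $l\gg0$, and the analogous character weight in your setting degenerates in the same way once $\deg\delta\geq\deg P$. The paper's fix is to first reformulate large-$\delta$ stability so that $\delta$ disappears --- Lemma~\ref{ReIntSS}, the inequality $P_G/(2r(G)-\epsilon(\alpha^{\prime}))<P/(2r-\epsilon(\alpha))$ --- and then calibrate the linearization to this capped form, $n_1/n_2=P(l)/2r$ in (\ref{linearized-line-bundle}); you would need the corresponding capped character weight, and your Hilbert--Mumford computation would then match GIT stability against the reformulated inequality rather than against the literal $\delta$-inequality. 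Finally, be aware that your ``standard arguments'' showing that every $L_\delta$-semistable point of the ambient Quot scheme lands in $Q$ (injectivity of $V\to H^0(E(m))$, exclusion of torsion, purity) are the exact counterpart of the paper's Corollary~\ref{InjGlobal} and Proposition~\ref{gitss-semistable}, which rest on the section-counting Lemma~\ref{EquiSS2}; this is where the bulk of the labor sits, as you correctly anticipated, and it is no shorter in your packaging.
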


Deformation-obstruction theory of stable pairs is very similar to that of the Quot scheme.
For a quotient $q:E_{0}\twoheadrightarrow F$, let $G=\ker q$, then we have a short exact sequence
\[0\to G\to E_{0}\to F\to 0\]
The deformation space, respectively the obstruction space, is
\begin{equation*}\Hom(G,F),\quad\mbox{respectively}\quad \Ext^{1}(G,F).\end{equation*}
Notice that $G$ is quasi-isomorphic to the cochain complex $J^{\bullet}=\{E_{0}\to F\}$, the deformation space, respectively the obstruction space, of this quotient is isomorphic to
\[\Hom(J^{\bullet},F),\quad\mbox{respectively}\quad \Ext^{1}(J^{\bullet},F).
\] 

The deformation-obstruction problem of stable pairs has a similar answer.
Let $\mathcal Art_{k}$ be the category of local Artinian $k$-algebras with residue field $k$. Let $A,B\in {\rm Ob\,}\mathcal Art_{k}$ and 
\[0\to K\to B\stackrel{\sigma}{\to} A\to 0\]
be a {\em small} extension, i.e. $\frak m_{B}K=0$. Suppose $(E,\alpha)$ is a stable pair. Let $I^{\bullet}$ denote the following cochain complex concentrating at degree $0$ and $1$,
\[I^{\bullet} =\{E_{0}\stackrel{\alpha}{\to} E\}.\]
\begin{thm}[Deformation-Obstruction]
\label{deformation-obstruction}
Suppose we have a morphism $\alpha_{A}:E_0\otimes_k A\to E_A$ over $X_A=X\times_{\spec k}\spec A$ extending $\alpha$, where $E_A$ is a coherent sheaf flat over $A$. There is a class 
\[\ob(\alpha_{A},\sigma)\in \Ext^{1}(I^{\bullet}, E\otimes K),\]
 such that there exists an extension of $\alpha_{A}$ over $X_{B}$ if and only if $\ob(\alpha_{A},\sigma)=0$. If extensions exist, the space of extensions is a torsor under 
 \[\Hom(I^{\bullet}, E\otimes K).\]
\end{thm}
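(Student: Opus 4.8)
The plan is to reduce the deformation--obstruction problem for the pair to the two classical problems it is built from---deforming the sheaf $E$ and lifting a morphism out of the fixed sheaf $E_0$---and to package them together using the distinguished triangle attached to $I^\bullet$. The stupid truncation of $I^\bullet=\{E_0\stackrel{\alpha}{\to}E\}$ gives a distinguished triangle
\[
I^\bullet\to E_0\stackrel{\alpha}{\to}E\to I^\bullet[1]
\]
in the derived category of $X$, and applying $R\Hom(-,E\otimes K)$ produces a long exact sequence
\[
\cdots\to\Ext^i(E,E\otimes K)\stackrel{\alpha^\ast}{\to}\Ext^i(E_0,E\otimes K)\to\Ext^i(I^\bullet,E\otimes K)\to\Ext^{i+1}(E,E\otimes K)\to\cdots
\]
which is the backbone of the argument: $\Ext^1(I^\bullet,E\otimes K)$ sits between $\Ext^1(E_0,E\otimes K)$ (lifting the morphism) and $\Ext^2(E,E\otimes K)$ (deforming the sheaf), while $\Hom(I^\bullet,E\otimes K)$ is an extension of $\ker\big(\Ext^1(E,E\otimes K)\stackrel{\alpha^\ast}{\to}\Ext^1(E_0,E\otimes K)\big)$ by $\coker\big(\Hom(E,E\otimes K)\stackrel{\alpha^\ast}{\to}\Hom(E_0,E\otimes K)\big)$. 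I would also record the base-change isomorphism $\Ext^i_{X_B}(E_0\otimes_k B,E\otimes K)\cong\Ext^i_X(E_0,E\otimes K)$, and, for a flat lift, $\Ext^i_{X_B}(E_B,E\otimes K)\cong\Ext^i_X(E,E\otimes K)$, both valid because $\frak m_BK=0$ forces $E\otimes K$ to be pushed forward from the central fibre $X$.

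Next I would construct the class itself. Any flat extension $E_B$ of $E_A$ over $B$ sits in a short exact sequence $0\to E\otimes K\to E_B\to E_A\to 0$ on $X_B$, and extending the pair means producing such an $E_B$ together with a lift of the composite $\bar\alpha_A\colon E_0\otimes_k B\to E_0\otimes_k A\stackrel{\alpha_A}{\to}E_A$ along the surjection $E_B\twoheadrightarrow E_A$. Following the Quot-scheme template, I would build $\ob(\alpha_A,\sigma)$ by a local-to-global procedure: since $X$ is smooth, every coherent sheaf has a finite locally free resolution, so on affine opens $U_i$ the restricted pair $(E_A,\alpha_A)|_{U_i}$ admits a lift $(E_B^i,\alpha_B^i)$ over $U_i\times_{\spec k}\spec B$ (local deformations of the pair are unobstructed). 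Assembling the resulting ambiguities through the local-to-global spectral sequence for $R\HOM(I^\bullet,E\otimes K)$ yields a well-defined class in $\Ext^1(I^\bullet,E\otimes K)$. By construction its image in $\Ext^2(E,E\otimes K)$ under the connecting map is the classical obstruction to extending the sheaf $E_A$, and, once that vanishes, its content in $\Ext^1(E_0,E\otimes K)$ is the image of $\bar\alpha_A$ under the boundary map of the short exact sequence above, i.e. the obstruction to lifting the morphism.

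With the class in hand, the equivalence ``$\ob(\alpha_A,\sigma)=0\iff$ an extension exists'' follows by chasing the long exact sequence: vanishing forces the sheaf-obstruction in $\Ext^2(E,E\otimes K)$ to die, so some flat $E_B$ exists, and exactness then places the morphism-obstruction in $\Ext^1(E_0,E\otimes K)$ inside the image of $\alpha^\ast$, so it can be annihilated by modifying the chosen $E_B$; conversely an actual extension makes both obstructions vanish at once. For the torsor statement I would compare two extensions $(E_B,\alpha_B)$ and $(E_B',\alpha_B')$: their difference splits into the difference of the two flat extensions of $E_A$, classified by $\Ext^1(E,E\otimes K)$, and the difference of the two morphism-lifts, classified by $\Hom(E_0,E\otimes K)$, and the long exact sequence for $\Hom(I^\bullet,E\otimes K)$ identifies the set of compatible differences with $\Hom(I^\bullet,E\otimes K)$ exactly.

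The main obstacle I anticipate is producing a \emph{single}, choice-independent class in $\Ext^1(I^\bullet,E\otimes K)$ rather than two separate obstructions, since the morphism-lifting obstruction only makes sense after a flat $E_B$ has been chosen, whereas in general no such $E_B$ exists. I would handle this by carrying out the local-to-global assembly with a not-necessarily-flat lift of the pair and measuring, in one stroke, both the failure of flatness (the $\mathrm{Tor}_1^B$ defect, responsible for the $\Ext^2(E,E\otimes K)$ component) and the defect of the morphism. Verifying that the two projections of the resulting hyperext class recover the classical sheaf- and morphism-obstructions, and that the class is independent of all local choices, is the delicate but essentially routine heart of the argument.
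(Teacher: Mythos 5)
Your framework is sound as far as it goes: the triangle $I^{\bullet}\to E_{0}\to E\to I^{\bullet}[1]$ and its long exact sequence do govern the problem, your description of $\Hom(I^{\bullet},E\otimes K)$ as an extension is correct, and you have correctly located the crux, namely producing one well-defined class in $\Ext^{1}(I^{\bullet},E\otimes K)$ even when no flat sheaf extension $E_{B}$ exists. But that crux is exactly what your proposal does not supply, and the two devices you offer for it both fail. First, the claim that on affine opens ``local deformations of the pair are unobstructed'' is false in general: when $E_{0}=\oo_{X}$ and $\alpha$ is surjective onto a $0$-dimensional sheaf, pair deformations coincide with Quot-functor deformations (over an Artinian base, $\coker\alpha_{B}\otimes_{B}k=0$ and Nakayama force $\alpha_{B}$ to stay surjective), and these are obstructed on affine opens of a smooth threefold --- this is precisely why Hilbert schemes of points on threefolds are singular, and such pairs are stable pairs covered by the theorem. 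So the \v{C}ech-type assembly cannot even get started. Second, the fallback (``carry out the assembly with a not-necessarily-flat lift and measure the $\mathrm{Tor}_{1}$ defect and the morphism defect in one stroke'') is not an argument; it is a restatement of the theorem. Note also two points your diagram chase silently assumes: to know a preimage of the sheaf obstruction under $\partial:\Ext^{1}(I^{\bullet},E\otimes K)\to\Ext^{2}(E,E\otimes K)$ exists at all, you need $\alpha^{*}\bigl(\ob_{\mathrm{sheaf}}(E_{A})\bigr)=0$ in $\Ext^{2}(E_{0},E\otimes K)$, and the exact sequence then determines that preimage only up to the image of $\Ext^{1}(E_{0},E\otimes K)$; neither issue is addressed.

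Compare this with how the paper removes all local analysis and all flatness hypotheses in one stroke. One resolves $E_{0}$ and $E_{A}$ by complexes whose terms are $V^{i}\otimes\oo_{X}(-m_{i})$, respectively $W^{i}\otimes\oo_{X_{A}}(-n_{i})$, with very negative twists. Such terms lift canonically to $X_{B}$, and maps between them lift termwise because $H^{0}(X,\oo_{X}(j))\otimes B\to H^{0}(X,\oo_{X}(j))\otimes A$ is surjective; so one lifts the differentials and the chain map $\alpha_{A}^{\bullet}$ arbitrarily, with no need for any flat $E_{B}$. The failure of the lifted data to form a complex vanishes on $X_{A}$, hence factors through $K$ and defines a morphism of complexes $(\omega_{P}^{\bullet},\omega_{Q}^{\bullet}):C(\alpha^{\bullet})\to Q^{\bullet}[2]\otimes K$, i.e.\ (using the very negativity to identify homotopy classes with hyperext) a single class in $\Ext^{1}(I^{\bullet},E\otimes K)$ --- the object your proposal is missing. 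If this class is zero, a homotopy corrects the lifts so that the data becomes an honest complex, whose $H^{0}$ is then a flat extension of the pair by Lemma~\ref{complex-implies-exact}; independence of all choices and the torsor structure under $\Hom(I^{\bullet},E\otimes K)$ are verified by explicit homotopies (and the identification of equivalent extensions uses stability via Lemma~\ref{SmallLarge}, another point absent from your sketch). Your two-obstruction picture is then recoverable as a corollary of this construction --- its image under $\partial$ is the sheaf obstruction, and when that dies it is $\iota$ of the morphism obstruction --- but it cannot serve as a substitute for the construction itself.
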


In some special cases of moduli spaces of stable pairs, $\Ext^{i}(I^{\bullet},E)\not= 0$ only when $i=0,1$. In these cases, we will demonstrate the existence of the virtual fundamental class, which is important for the study of intersection theory on the moduli space.
\begin{thm}[Virtual Fundamental Class]\label{virtual-fundamental-class}
Suppose $X$ is a surface, $E_{0}$ is torsion free, $\deg P=1$, and $\deg\delta\geq1$. Then the moduli space $S_{X}^{s}(P,\delta)=S_{X}(P,\delta)$ of stable pairs admits a virtual fundamental class.\end{thm}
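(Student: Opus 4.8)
The plan is to verify the hypothesis flagged in the paragraph preceding the statement---that $\Ext^i(I^\bullet,E)=0$ for $i\neq 0,1$---and then to promote the pointwise deformation--obstruction theory of Theorem~\ref{deformation-obstruction} to a global perfect obstruction theory of amplitude $[-1,0]$ on $S_X(P,\delta)$, to which the construction of Behrend--Fantechi applies. The identification $S_X^s(P,\delta)=S_X(P,\delta)$ I would extract from the large-$\delta$ stability analysis underlying Theorem~\ref{MainThm}: when $\deg\delta\geq\deg P=1$, purity of $E$ together with the $0$-dimensionality of $\coker\alpha$ leaves no strictly semistable pairs, and a stable pair $(E,\alpha)$ has only the identity automorphism (any $f$ with $f\alpha=\alpha$ has $\im(f-\id)$ a $0$-dimensional subsheaf of the pure $E$, hence $0$), so $S_X^s(P,\delta)$ is a fine moduli space carrying a universal pair. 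The computational heart is the $\Ext$-vanishing, which I would organize around the distinguished triangle $E[-1]\to I^\bullet\to E_0\xrightarrow{\alpha}E$ coming from the stupid filtration of $I^\bullet=\{E_0\xrightarrow{\alpha}E\}$. Applying $\mathbf R\Hom(-,E)$ yields
\[\cdots\to\Ext^i(E_0,E)\to\Ext^i(I^\bullet,E)\to\Ext^{i+1}(E,E)\xrightarrow{\alpha^*}\Ext^{i+1}(E_0,E)\to\cdots.\]

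First I would bound the two pieces. Because $E_0$ is torsion free on the smooth surface $X$ its stalks have depth $\geq 1$, so by Auslander--Buchsbaum it has projective dimension $\leq 1$ and $\EXT^q(E_0,E)=0$ for $q\geq 2$; because $E$ is pure of dimension $1$ its stalks also have depth $\geq 1$, so likewise $\EXT^q(E,E)=0$ for $q\geq 2$. Since the surviving $\HOM$ and $\EXT^1$ sheaves are all supported on the $1$-dimensional $\supp E$, the local-to-global spectral sequence forces $\Ext^i(E_0,E)=\Ext^i(E,E)=0$ outside $i\in\{0,1,2\}$. Feeding this into the long exact sequence immediately gives $\Ext^i(I^\bullet,E)=0$ for $i\leq-2$ and for $i\geq 3$, leaving exactly the boundary cases $i=-1$ and $i=2$.

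These two cases are where the hypotheses do real work. For $i=-1$ the sequence identifies $\Ext^{-1}(I^\bullet,E)$ with $\ker\big(\alpha^*:\Hom(E,E)\to\Hom(E_0,E),\ f\mapsto f\circ\alpha\big)$; if $f\circ\alpha=0$ then $f$ factors through $\coker\alpha$, so $\im f$ is a quotient of the $0$-dimensional $\coker\alpha$, while $\im f\subseteq E$ is a subsheaf of the pure $1$-dimensional $E$, forcing $\im f=0$ and hence injectivity. For $i=2$ the sequence identifies $\Ext^2(I^\bullet,E)$ with $\coker\big(\alpha^*:\Ext^2(E,E)\to\Ext^2(E_0,E)\big)$, so I must show this $\alpha^*$ is surjective. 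Here I would dualize using Serre duality on the surface: $\alpha^*$ is dual to postcomposition with $\alpha\otimes\id_{\omega_X}$, namely $\Hom(E,E_0\otimes\omega_X)\to\Hom(E,E\otimes\omega_X)$, whose kernel consists of maps $E\to(\ker\alpha)\otimes\omega_X$. Since $\ker\alpha\subseteq E_0$ is torsion free while $E$ is torsion, every such map vanishes; hence $\alpha^*$ is surjective and $\Ext^2(I^\bullet,E)=0$. This Serre-duality step, which simultaneously uses that $X$ is a surface, that $E_0$ is torsion free, and that $E$ is pure of dimension one, is the crux of the pointwise argument.

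With the vanishing in hand I would globalize. Over $X\times S$, with $S=S_X(P,\delta)$ and $\pi$ the projection to $S$, form the universal complex $\mathbb I^\bullet=\{E_0\boxtimes\oo_{S}\to\mathbb E\}$ and set $E^\bullet=\big(\mathbf R\Hom_\pi(\mathbb I^\bullet,\mathbb E)\big)^\vee$, where $\mathbf R\Hom_\pi$ denotes the relative derived homomorphism complex. The pointwise vanishing together with cohomology and base change shows $\mathbf R\Hom_\pi(\mathbb I^\bullet,\mathbb E)$ is perfect of amplitude $[0,1]$, so $E^\bullet$ is perfect of amplitude $[-1,0]$; Theorem~\ref{deformation-obstruction}, read in families, furnishes a morphism $E^\bullet\to L_{S}$ to the cotangent complex inducing the isomorphism on $h^0$ and surjection on $h^{-1}$ required of a perfect obstruction theory, and Behrend--Fantechi then produce the class $[S]^{\vir}$. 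I expect the main obstacle to be this globalization---constructing the cotangent-complex morphism and checking the base-change and perfectness claims uniformly over $S$---rather than the pointwise vanishing, though this step is by now standard, following the analogous treatments for the Quot scheme and for Pandharipande--Thomas pairs.
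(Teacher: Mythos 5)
Your proof is correct, and its overall architecture matches the paper's: a pointwise vanishing statement $\Ext^{i}(I^{\bullet},E)=0$ for $i\neq 0,1$ (the paper's Lemma~\ref{no-higher-obstructions}), followed by globalization to a two-term complex of locally free sheaves and an appeal to Behrend--Fantechi/Li--Tian. Where you genuinely diverge is in the proof of the vanishing lemma. You use the stupid-filtration triangle $E[-1]\to I^{\bullet}\to E_{0}\stackrel{\alpha}{\to}E$, which obliges you to (a) bound the range of $\Ext^{i}(E_{0},E)$ and $\Ext^{i}(E,E)$ via Auslander--Buchsbaum and the local-to-global spectral sequence, and (b) analyze the kernel and cokernel of $\alpha^{*}$ in the boundary degrees $i=-1,2$. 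The paper instead uses the cohomology triangle $K\to I^{\bullet}\to Q[-1]\to K[1]$ arising from $0\to K\to E_{0}\to E\to Q\to 0$, where $K=\ker\alpha$ is torsion free and $Q=\coker\alpha$ is zero-dimensional (the latter by Lemma~\ref{GenSurj}); with that decomposition the only two groups at issue are literally $\Hom(Q,E)$ and $\Ext^{2}(K,E)\cong\Hom(E,K\otimes\omega_{X})^{\vee}$, each of which dies by a one-line torsion-versus-pure argument. Both proofs ultimately rest on the same two facts --- a torsion sheaf admits no nonzero map to a pure or torsion-free sheaf, plus Serre duality on the surface --- but the paper's triangle isolates them immediately, while yours spreads the work over more steps. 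Two simplifications you could make: the range bound is cheaper than you make it, since on a smooth projective surface $\Ext^{i}(F,G)=0$ for all coherent $F,G$ and $i\notin[0,2]$ directly by Serre duality; and your $i=2$ case is roundabout, because Serre duality already gives $\Ext^{2}(E_{0},E)\cong\Hom(E,E_{0}\otimes\omega_{X})^{\vee}=0$ outright ($E$ is torsion, $E_{0}\otimes\omega_{X}$ torsion free), so the surjectivity of $\alpha^{*}$ never needs to be argued. Your globalization step is the same in substance and comparable in rigor to the paper's; the paper is merely more explicit, building the locally free two-term complex by hand from very negative resolutions, Grothendieck--Verdier duality, and truncation, where you invoke cohomology and base change.
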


The virtual fundamental class can be used to define invariants of the surface. Kool and Thomas~\cite{kool-thomas-i,kool-thomas-ii} studied stable pairs invariants with $E_{0}\cong \oo_{X}$ on surfaces, using the reduced obstruction theory, which is necessary. We will address the intersection theory of the moduli space of stable pairs on a surface in future work.

\vskip6pt
After this paper was completed, I learned about the article~\cite{MR3360753},
where the stability condition for pairs had been defined and the small $\delta$ case of Theorem~\ref{MainThm} of this paper had been stated as the main theorem~\cite[Theorem 3.8]{MR3360753}.  
 In the large $\delta$ case, $\deg\delta\geq \deg P$, the linearized ample line bundle needs to be chosen differently~(\ref{linearized-line-bundle}) for the GIT construction. 
In this paper, a separate construction is carried out from a basic level. For example, Lemma~\ref{EquiSS2} is shown for characterizing stability in terms of global sections instead of Hilbert polynomials. 
 Theorem~\ref{MainThm} covers all cases of the construction, including the geometrically important large $\delta$ case. The large $\delta$ case is presented in the body of the paper in Sections~\ref{boundedness} and \ref{construction}. The small $\delta$ case where the construction was previously carried out by Wandel is included in the appendix for completeness.
  In view of Wandel's result, the contents of the appendix are not new.
 The body of the paper also contains in Section~\ref{deformation-theory} the deformation-obstruction theory, captured by Theorem~\ref{deformation-obstruction}. 
 Section~\ref{virtual-class} shows the existence of the virtual fundamental class in special geometries, Theorem~\ref{virtual-fundamental-class}.
 Section~\ref{examples} gives examples of smooth moduli spaces and calculate their topological Euler characteristics.
 Section~\ref{preliminary} sets the stage with preliminary results.
 
 I recently learned that the stable pair moduli space for $\deg \delta \geq \deg P$ was also previously studied in  \cite{hulls-husks}, where it appears as the moduli space of quotient husks. The author constructed it as a bounded proper separated algebraic space. The space was used in \cite{hulls-husks} to study an analogue of the flattening decomposition theorem for reflexive hulls. The current paper settles affirmatively the question raised in \cite{hulls-husks} regarding the projectivity of the space.

I finally note that once the moduli space is constructed for $\deg\delta< \deg P$, it is available in an indirect way for $\deg\delta\geq \deg P$ as well. This follows from the finiteness of the set of critical values and the fact that the largest critical polynomial $\delta_{\max}$ has $\deg\delta_{\max}< \deg P$. Then the stability polynomial $\delta^{\prime}$ can be taken to be of degree $\deg P-1$ and larger than $\delta_{\max}$. For any $\delta$ with $\deg\delta\geq \deg P$, we have $S_{X}(P,\delta)\cong S_{X}(P,\delta^{\prime})$. Although this observation is not made in \cite{MR3360753}, the author proves the set of critical $\delta$'s is finite. This is also included in the current appendix with a different proof.

This indirect argument does not however yield the linearized ample line bundle for $S_{X}(P,\delta)$ with $\deg\delta\geq \deg P$. For stability polynomials $\delta^{\prime}$ with $\deg\delta ^{\prime}<\deg P$, the linearization depends directly on $\delta^{\prime}$; the highest critical polynomial $\delta _{\max}$ cannot be determined explicitly though, since the boundedness which underlies the finiteness of the set of critical stability values is itself not explicit. 
 
For some applications, it is nevertheless important to know the line bundle explicitly. A natural problem to study next is that of wall-crossing formulas, using Thaddeus' master space~\cite{MR1333296,mochizuki}. The construction of the master space requires the linearized ample line bundle. So, it is important to construct the moduli space directly via GIT and obtain the ample line bundle. I will address the problem of wall-crossing formulas in future work.

\vskip8pt
 {\bf Acknowledgements.} The author is very grateful to Professor Alina Marian for introducing him to this problem and discussions throughout the process of studying this problem and preparing this paper. The author also wants to thank Barbara Bolognese, Prof. Daniel Huybrechts, Prof. Anthony Iarrobino, Yaping Yang, and Gufang Zhao for the discussions and correspondences.
\vskip45pt
%----------------------------------------------------------

\section{Basic properties of stable pairs}\label{preliminary}

\vskip35pt

\subsection{Preliminaries on coherent sheaves}

For a coherent sheaf $E$ on $(X,\oo_{X}(1))$, we denote by $P_{E}$ its {\it Hilbert polynomial}. Recall that, we can write the Hilbert polynomial in the following form
\begin{equation*}
P_{E}(m)=\sum^{d}_{i=0}a_{i}(E)\frac{m^{i}}{i!},\end{equation*}
where $d=\dim E$ is the dimension of the support of $E$ and $a_i(E)\in \Z$. We denote by 
\[r(E)=a_{d}(E)\]
the {\it multiplicity} of $E$. Let 
\[p_{E}=\frac{P_{E}}{r(E)}\]
be the {\it reduced} Hilbert polynomial. For a coherent sheaf $E$, the {\it slope} of $E$ is 
\[\mu(E)=\frac{a_{d-1}(E)}{a_{d}(E)}.\]
Polynomials are ordered lexicographically.

A coherent sheaf $E$ is {\it pure} if there is no subsheaf of lower dimensional support. 
It is {\it semistable} (respectively slope-semistable), if it is pure and there is no subsheaf with larger reduced Hilbert polynomial (respectively slope). 
For a pure sheaf, there is a {\it Harder-Narasimhan filtration} with respect to slope
\[0\subsetneqq E_{1}\subsetneqq E_{2}\subsetneqq\cdots \subsetneqq E_{l}=E,\]
where $E_{t+1}/E_{t}$ is slope semistable and 
\[\mu(E_{t}/E_{t-1})>\mu(E_{t+1}/E_{t}),\quad \forall t\in [1,l-1].\]
We shall denote 
\[\mu_{\max}(E)=\mu(E_{1})\quad \mbox{and} \quad\mu_{\min}(E)=\mu(E_{l}/E_{l-1}).\]

To construct the moduli space via GIT, the first step is to prove a boundedness result. For our convenience, we group a sequence of boundedness results here.
\begin{customthm}{G}[Grothendieck]\label{GrSlope}Suppose $F$ is a pure coherent $\oo_{X}$-module of dimension $d$. Then: \begin{enumerate}[(i)]
\item the slopes of nonzero coherent subsheaves are bounded above;
\item the family of subsheaves $F^{\prime}\subset F$ with slopes bounded below, such that the quotient $F/F'$ is pure and of dimension $d$, is bounded.
\end{enumerate}
\end{customthm}
We can also make a statement similar to the second assertion about the boundedness of quotients. For the proof of this basic theorem, see \cite[Lemma 2.5]{MR1611822}.

Let $Y$ be the scheme theoretic support of a pure sheaf $E$ of dimension $d$ and multiplicity $r=r(E)$. We include the following results discussed in
\cite{LePotier93}.
\begin{lem}The degree of $Y$ is no larger than $r^{2}$.\end{lem}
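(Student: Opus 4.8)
The plan is to reduce the inequality to a length computation at the generic points of the support. First I would recall that the scheme-theoretic support is $Y=V(\operatorname{Ann} E)$, the closed subscheme cut out by the annihilator ideal $\ker\big(\oo_{X}\to \HOM(E,E)\big)$; by construction $E$ is a \emph{faithful} $\oo_{Y}$-module, so there is an injection of $\oo_{X}$-algebras
\[
\oo_{Y}\hookrightarrow \HOM(E,E).
\]
Purity of $E$ forces $\supp E$, and hence $Y$, to be equidimensional of dimension $d$, and $\HOM(E,E)$ is likewise supported in dimension $d$. Since the leading coefficient $a_{d}$ of the Hilbert polynomial of a sheaf of dimension $\le d$ is additive in short exact sequences and is nonnegative, the displayed inclusion yields
\[
\deg Y=a_{d}(\oo_{Y})\le a_{d}\big(\HOM(E,E)\big).
\]

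The heart of the matter is then to bound the right-hand side. I would use the standard decomposition of the leading coefficient along generic points: if $\eta_{1},\dots,\eta_{s}$ are the generic points of the $d$-dimensional components $Y_{1},\dots,Y_{s}$ of $Y$, and $A_{j}=\oo_{X,\eta_{j}}$, $e_{j}=\deg (Y_{j})_{\mathrm{red}}$, then for any coherent sheaf $F$ of dimension $d$ one has $a_{d}(F)=\sum_{j}\operatorname{length}_{A_{j}}(F_{\eta_{j}})\,e_{j}$. Applying this to $E$ gives $r=a_{d}(E)=\sum_{j}\ell_{j}e_{j}$ with $\ell_{j}=\operatorname{length}_{A_{j}}E_{\eta_{j}}\ge 1$. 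Since $E$ is coherent, $\HOM(E,E)_{\eta_{j}}=\Hom_{A_{j}}(E_{\eta_{j}},E_{\eta_{j}})$, and a routine induction on the length of the first argument (using left-exactness of $\Hom$ and the fact that $\Hom_{A_{j}}$ out of the residue field computes a socle of length at most $\operatorname{length} N$) gives $\operatorname{length}_{A_{j}}\Hom_{A_{j}}(M,N)\le \operatorname{length}_{A_{j}}M\cdot\operatorname{length}_{A_{j}}N$ for finite-length modules. In particular $\operatorname{length}_{A_{j}}\HOM(E,E)_{\eta_{j}}\le \ell_{j}^{2}$.

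Finally I would combine everything, using $e_{j}\ge 1$:
\[
\deg Y\le a_{d}\big(\HOM(E,E)\big)=\sum_{j}\operatorname{length}_{A_{j}}\big(\HOM(E,E)_{\eta_{j}}\big)\,e_{j}\le \sum_{j}\ell_{j}^{2}e_{j}\le \sum_{j}\ell_{j}^{2}e_{j}^{2}\le\Big(\sum_{j}\ell_{j}e_{j}\Big)^{2}=r^{2},
\]
which is the assertion. I expect the difficulty to be bookkeeping rather than depth: one must justify the generic-point decomposition of $a_{d}$ for both $\oo_{Y}$ and $\HOM(E,E)$ and check that faithfulness localizes to the stalks $E_{\eta_{j}}$. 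The one genuinely essential input is the quadratic length estimate $\operatorname{length}\Hom_{A_{j}}(E_{\eta_{j}},E_{\eta_{j}})\le \ell_{j}^{2}$, which is precisely where the exponent in $r^{2}$ comes from; note that when $Y$ is generically reduced this degenerates to the sharper bound $\deg Y\le r$.
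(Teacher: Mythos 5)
Your proposal is correct and follows essentially the route the paper intends: the paper's one-line proof defers to Le Potier's ``equivalent definition of multiplicity,'' which is exactly the generic-point (cycle) decomposition $r=\sum_{j}\ell_{j}e_{j}$ that you use, and your faithful embedding $\oo_{Y}\hookrightarrow\HOM(E,E)$ together with the quadratic length bound $\operatorname{length}\Hom_{A_{j}}(E_{\eta_{j}},E_{\eta_{j}})\le\ell_{j}^{2}$ is precisely the bookkeeping that makes the cited ``clear'' step rigorous. Your closing observation that generically reduced $Y$ gives the sharper bound $\deg Y\le r$ is also correct.
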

\begin{proof}This is clear from an equivalent definition of multiplicity \cite[Definition 2.1]{LePotier93}.
\end{proof}

\begin{lem}\label{MuMin}The minimum slope $\mu_{\min}(\oo_{Y})$ is bounded below by a constant determined by $n$, 
$r$ and $d$.\end{lem}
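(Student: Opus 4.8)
The plan is to translate the lower bound on $\mu_{\min}(\oo_Y)$ into a boundedness statement for subschemes of bounded degree. I would start from the standard description of the minimal slope through the Harder--Narasimhan filtration: writing the last graded piece as a quotient, one has
\[\mu_{\min}(\oo_Y)=\min\{\,\mu(Q)\ :\ \oo_Y\twoheadrightarrow Q\neq 0,\ Q\text{ pure of dimension }d\,\}.\]
(Only the $d$-dimensional quotients matter, and these factor through $\oo_Y$ modulo its lower-dimensional torsion, so there is no harm in assuming $\oo_Y$ pure.) It therefore suffices to bound $\mu(Q)$ from below uniformly over all pure $d$-dimensional quotients $Q$ of $\oo_Y$.

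The first key step is to identify these quotients. Since $\oo_Y=\oo_X/I_Y$ is cyclic, so is every quotient $Q$, and purity forces $Q\cong\oo_Z$ for a pure $d$-dimensional closed subscheme $Z\subseteq Y$. As $\oo_Z$ is a quotient of $\oo_Y$ of the same dimension, its multiplicity obeys $r(\oo_Z)\le r(\oo_Y)=\deg Y$, and the previous lemma gives $\deg Y\le r^2$. Hence the question reduces to bounding
\[\mu(\oo_Z)=\frac{a_{d-1}(\oo_Z)}{a_d(\oo_Z)}\]
from below over all pure $d$-dimensional subschemes $Z\subseteq X$ of degree at most $r^2$.

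The second key step is a boundedness estimate for such $Z$. The family of $d$-dimensional closed subschemes of $X$ of degree $\le r^2$ is bounded, so only finitely many Hilbert polynomials occur and the slopes $\mu(\oo_Z)$ are bounded below. To see that the bound depends only on $n$, $r$, $d$ (and the fixed polarization), I would make this explicit by iterated general hyperplane sections: cutting $Z$ with $d-1$ general hyperplanes yields a pure one-dimensional scheme of the same degree $\le r^2$, the coefficient $a_{d-1}(\oo_Z)$ descends to the Euler characteristic of this curve, and Castelnuovo's bound on the arithmetic genus of a curve of degree $\le r^2$ controls it from above. Since $a_d(\oo_Z)\ge 1$, dividing then produces the desired constant.

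The hard part is exactly this last estimate. Bounded degree alone does not bound the Hilbert polynomial; one must use purity to exclude embedded points and lower-dimensional components, and, to keep the constant numerical rather than merely abstract, replace the soft finiteness of the Hilbert scheme by a uniform Castelnuovo-type genus bound that behaves well under hyperplane section. A minor preliminary to confirm is that passing to $\oo_Y$ modulo lower-dimensional torsion does not change the set of $d$-dimensional quotients, so that the reduction to pure $\oo_Y$ in the first step is legitimate.
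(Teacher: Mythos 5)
Your opening reductions are sound: since $\oo_Y$ is cyclic, every pure $d$-dimensional quotient computing $\mu_{\min}(\oo_Y)$ is of the form $\oo_Z$ for a pure $d$-dimensional closed subscheme $Z\subseteq Y$ of degree at most $\deg Y\le r^2$, so the problem is indeed to bound $\mu(\oo_Z)=a_{d-1}(\oo_Z)/a_d(\oo_Z)$ from below in terms of $n$, $r$, $d$. (The paper itself gives no argument at this point --- it cites Le Potier's Lemma 2.12 --- so your attempt has to stand on its own.) The genuine gap is in your second key step. The claim that pure $d$-dimensional subschemes of degree $\le r^2$ form a bounded family, hence realize only finitely many Hilbert polynomials, is false. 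Ribbons give a counterexample already in degree $2$: for every $b\ge -1$ the ideal $I_b=(x^2,\,xy,\,y^2,\,z^{b+1}x-w^{b+1}y)$ defines a double structure $C_b$ on the line $L=\{x=y=0\}\subset \p^3$, sitting in an extension $0\to \oo_L(b)\to \oo_{C_b}\to \oo_L\to 0$; each $C_b$ is pure (indeed Cohen--Macaulay, locally a planar double line), of degree $2$, and $\chi(\oo_{C_b})=b+2$ is unbounded. So purity plus bounded degree bounds neither the family nor the Hilbert polynomial; what survives is only a one-sided estimate (slope bounded below, equivalently arithmetic genus bounded above), and that one-sided estimate is precisely the content of the lemma --- it cannot be extracted from finiteness of a Hilbert scheme.

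Consequently all the weight falls on your Castelnuovo argument, and there lies the second gap: the classical Castelnuovo bound concerns integral curves, whereas the one-dimensional schemes produced by your hyperplane sections are in general reducible and non-reduced (non-reducedness is unavoidable, as the $C_b$ show), and for such schemes a genus upper bound in terms of the degree is exactly the nontrivial assertion to be proven, not a quotable classic. Filling it requires a separate argument, e.g.\ a d\'evissage of $\oo_C$ along the multiplicity filtration of $C_{\rm red}$, bounding each graded piece as a quotient of symmetric powers of conormal sheaves (this is how one sees $p_a(C_b)=-1-b\le 0$ above), combined with Castelnuovo plus a refined B\'ezout count of pairwise intersections in the reduced, reducible case. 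Two smaller inaccuracies: a general hyperplane section of a pure scheme need not be pure (pure does not imply Cohen--Macaulay), though this is harmless since embedded points only increase $\chi$; and $a_{d-1}(\oo_Z)$ does not equal $\chi$ of the curve section but differs from it by the explicit correction $(d-1)\,a_d(\oo_Z)/2$, again harmless since $a_d(\oo_Z)\le r^2$. In short, the reduction to curves is a good idea and consistent with what Le Potier's cited lemma accomplishes, but as written the proof assumes the key estimate rather than proving it.
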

\begin{proof}See \cite[Lemma 2.12]{LePotier93}.\end{proof}

The following statement \cite[Theorem 1.1]{SimpsonI} is crucial to our proof of boundedness.
\begin{customthm}{S}[Simpson]\label{BddMuMax}Let $C$ be a rational 
 constant. The family of pure coherent sheaves $E$ with Hilbert polynomial $P_{E}=P$, such that 
\[\mu_{\max}(E)\leq C,\] is bounded.\end{customthm}

Bounding $\mu_{\max}$ from above is equivalent to bounding $\mu_{\min}$ from below, because Hilbert polynomial is additive in a short exact sequence.

We will also need the following statement~\cite[Corollary 1.7]{SimpsonI}.
\begin{lem}[Simpson]\label{SimpsonH0}Suppose $F$ is a slope semistable sheaf of dimension $d$, multiplicity $r$ and slope $\mu$. There is a constant $C$ depending on $r$ and $d$ 
 such that\footnote{$[x]_+=\max\{0,x\}.$}
\[\frac{h^{0}(F)}{r}\leq\frac{1}{d!}([\mu+C]_{+})^{d}.\]\end{lem}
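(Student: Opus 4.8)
The statement is the Le Potier--Simpson estimate specialized to the semistable case, so the plan is to prove the more flexible bound
\[
h^{0}(F)\leq \frac{r(F)}{d!}\bigl([\mu_{\max}(F)+C]_{+}\bigr)^{d}
\]
for an \emph{arbitrary} pure sheaf $F$ of dimension $d$, with $C=C(d)$, and then read off the lemma from the facts that a slope semistable sheaf has $\mu_{\max}(F)=\mu(F)$ and $r(F)=r$. Working with $\mu_{\max}$ rather than $\mu$ is what makes an induction possible, since the restrictions and subquotients that appear will not stay semistable. I would argue by induction on $d$. For $d=0$ the sheaf $F$ has finite length, $h^{0}(F)=r(F)$, and the right-hand side is $r(F)$ under the convention $(\,\cdot\,)^{0}=1$, so the base case is immediate; on a curve ($d=1$) it is Riemann--Roch together with Clifford's inequality applied to the Harder--Narasimhan factors.

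For the inductive step I replace the polarization by a large enough power and choose a general hyperplane $H\in|\oo_{X}(1)|$ so that multiplication by the section cutting out $H$ is injective on the pure sheaf $F$, giving
\[
0\to F(-1)\to F\to F_{H}\to 0,
\]
with $F_{H}=F|_{H}$ pure of dimension $d-1$ and, by comparison of Hilbert polynomials, of the same multiplicity $r(F)$. Twisting by $\oo_{X}(-j)$, taking cohomology, and telescoping the inequalities $h^{0}(F(-j))-h^{0}(F(-j-1))\leq h^{0}(F_{H}(-j))$ (using $h^{0}(F(-j))=0$ for $j\gg 0$) yields
\[
h^{0}(F)\leq\sum_{j\geq 0}h^{0}\bigl(F_{H}(-j)\bigr).
\]
Applying the inductive hypothesis to $F_{H}$ through its Harder--Narasimhan factors, whose multiplicities sum to $r(F)$ and whose slopes are at most $\mu_{\max}(F_{H})$, and noting that each twist lowers the slope by exactly one, bounds the $j$-th term by $\tfrac{r(F)}{(d-1)!}([\mu_{\max}(F_{H})-j+C_{d-1}]_{+})^{d-1}$. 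Comparing the resulting sum of $(d-1)$-st powers with an integral turns it into a $d$-th power, converting the factor $\tfrac{1}{(d-1)!}$ into $\tfrac{1}{d!}$ and leaving a bound in terms of $\mu_{\max}(F_{H})$.

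The main obstacle is to close the induction: I must bound $\mu_{\max}(F_{H})$ up to an additive constant depending only on $d$ in terms of $\mu_{\max}(F)$, even though the restriction of a semistable sheaf to a hyperplane need not be semistable. I do not need the sharp restriction theorems, only an estimate of the form $\mu_{\max}(F_{H})\leq \mu_{\max}(F)+C''(d)$, and this is exactly the kind of control the preceding results are designed to supply: the bound $\deg Y\leq r^{2}$ on the support, the lower bound on $\mu_{\min}(\oo_{Y})$ in Lemma~\ref{MuMin}, and Grothendieck's Theorem~\ref{GrSlope} on slopes of subsheaves. The plan is to confront a maximal destabilizing subsheaf of $F_{H}$ with the semistability of $F$ (via its preimage in $F$), absorbing the discrepancy into the bounded geometry of $Y$. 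As a sanity check that calibrates the constant $C$ and the truncation $[\,\cdot\,]_{+}$, note that a slope semistable $F$ of sufficiently negative slope has $h^{0}(F)=0$: any section generates a pure subsheaf which is a quotient of $\oo_{X}$, hence has slope at least $\mu_{\min}(\oo_{Y})$ by Lemma~\ref{MuMin}, contradicting semistability once $\mu(F)$ drops below that threshold. I expect the additive restriction estimate for $\mu_{\max}(F_{H})$, and the bookkeeping of the accumulated constant $C(d)$, to be the genuinely delicate part; the telescoping and the final specialization to semistable $F$ are then formal.
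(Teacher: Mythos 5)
The paper offers no proof of this lemma at all---it is quoted directly from Simpson (Corollary 1.7 of \cite{SimpsonI})---so the only question is whether your argument stands on its own. Its skeleton (induction on $d$, restriction to a general $F$-regular hyperplane, the telescoping bound $h^{0}(F)\leq\sum_{j\geq 0}h^{0}(F_{H}(-j))$, and the sum-versus-integral comparison) is indeed the standard shape of the Le Potier--Simpson estimate. But the step you defer, namely $\mu_{\max}(F_{H})\leq \mu_{\max}(F)+C$, is not a technicality: it is the core of the theorem, and the plan you sketch for it fails. Let $E\subset F_{H}$ be a maximal destabilizing subsheaf and $G\subset F$ its preimage, so that $0\to F(-1)\to G\to E\to 0$. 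Then $a_{d}(G)=a_{d}(F)=r$ and $a_{d-1}(G)=a_{d-1}(F)-r+r(E)$, i.e.\ $\mu(G)=\mu(F)-1+r(E)/r$. Confronting this with Theorem~\ref{GrSlope}(i) (or with slope-semistability of $F$) yields only $r(E)\leq r\,\bigl(\mu_{\max}(F)-\mu(F)+1\bigr)$: a bound on the \emph{multiplicity} of $E$, not on its slope. The slope $\mu(E)=a_{d-2}(E)/a_{d-1}(E)$ is governed by the next coefficient $a_{d-2}(G)$, which no statement about slopes of subsheaves of $F$ can detect; and since $E$ is a quotient of $G$ of \emph{lower} dimension, the quotient-side analogue of Theorem~\ref{GrSlope} is silent as well. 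The bounded geometry of $Y$ (the bound $\deg Y\leq r^{2}$ and Lemma~\ref{MuMin}) controls subsheaves generated by sections of $\oo_{Y}$, and is likewise of no help against an arbitrary destabilizing subsheaf of a restriction.

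What your induction actually requires is a restriction theorem of Grauert--M\"ulich type: in characteristic $0$, for a slope-semistable sheaf and a general hyperplane, the consecutive Harder--Narasimhan slopes of the restriction differ by a bounded amount, whence $\mu_{\max}(F_{H})\leq\mu_{\max}(F)+C(r,d)$ after applying this to the Harder--Narasimhan factors of $F$. That is a genuine theorem, proved via the standard construction (relative Harder--Narasimhan filtrations over the incidence variety of hyperplanes), and for pure sheaves of dimension $d<\dim X$, whose slopes are defined through Hilbert coefficients on a possibly very singular support, one must first reduce to that situation, e.g.\ by Simpson's finite projection of the support onto $\p^{d}$. None of this follows formally from Theorems~\ref{GrSlope}, \ref{BddMuMax} or Lemma~\ref{MuMin}. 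Your base case is also glossed: for $d=1$ the support is an arbitrary, possibly non-reduced and singular, projective curve, so ``Riemann--Roch plus Clifford'' must itself be established for semistable pure sheaves on such curves, with constants absorbing $\deg Y\leq r^{2}$; this is fixable, but it is not the classical smooth-curve statement. As it stands, the proposal has a genuine gap precisely at its central step.
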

\vskip20pt
%----------------------------------------------------------
\subsection{Stable pairs}

Let $E_0$ be a coherent sheaf on $X$. Let $P$ be a polynomial of degree $d$, and $\delta$ a polynomial with a nonnegative leading coefficient.

\begin{defn}A {\rm pair} 
\[(E, \alpha)\]
(of type $P$) is a morphism $\alpha:E_{0}\to E$
 of coherent sheaves on $X$, where $P_{E}=P$.
 A {\rm sub-pair} 
 \[(E^{\prime},\alpha^{\prime})\subset(E,\alpha),\]
  is a morphism $\alpha^{\prime}:E_{0}\to E'$, such that $E' \subset E$ and 
  \[\Big\{\begin{tabular}{ll}
$\iota \circ \alpha^{\prime}= \alpha$ & if $E'\supset\im\alpha$,\\
$\alpha^{\prime}=0$ &  otherwise.
\end{tabular}\]
Here, $\iota$ denotes the inclusion $E^{\prime}\hookrightarrow E$. A {\rm quotient pair} 
\[(E'',\alpha^{\pprime})\]
 is a coherent quotient sheaf $q:E\to E''$ with 
\[\alpha^{\pprime}=q\circ \alpha:E_{0}\to E''.\]
\end{defn}
We say a pair $(E, \alpha)$ has dimension $d$ if $\dim E=d$.

A {\it morphism} $\phi:(E,\alpha)\to(F,\beta)$ of pairs is a morphism of sheaves $\phi:E\to F$ such that there is a constant $b\in k$
\[\phi\circ\alpha= b\beta.\] 
By this definition, sub-pairs and quotient pairs can be viewed as morphisms. For simplicity, we shall use the notation $\phi$ for both the morphism of pairs and that of their underlying sheaves.

A {\it short exact sequence of pairs}
\[0\to (E',\alpha^{\prime})\stackrel{\iota}{\to} (E,\alpha)\stackrel{q}{\to} (E'',\alpha^{\pprime})\to0\]
consists of a short exact sequence of sheaves $0\to E^{\prime}\to E\to E''\to 0$, 
such that $(E',\alpha^{\prime})$ is a sub-pair and $(E'',\alpha^{\pprime})$ the corresponding quotient pair. More precisely,
\[\alpha^{\pprime}=q\circ \alpha \mbox{ if }\alpha^{\prime}=0,\quad \mbox{and}\quad \alpha^{\pprime}=0 \mbox{ if }\iota\circ\alpha^{\prime}=\alpha.\]

The {\it Hilbert polynomial} ({\it reduced Hilbert polynomial} resp.) of a pair $(E,\alpha)$ is 
\[P_{(E,\alpha)}=P_{E}+\epsilon(\alpha)\delta\quad (p_{(E,\alpha)}=p_{E}+\frac{\epsilon(\alpha)\delta}{r(E)} \mbox{ resp.}).\]
Here, 
\[\epsilon(\alpha)=\Big\{\begin{tabular}{ll}
1 & if $\alpha\not=0$,\\
0 & otherwise.
\end{tabular}\]
Clearly, Hilbert polynomials are additive in a short exact sequence of pairs.

\begin{defn}\label{Stability}A pair $(E,\alpha)$ is $\delta$-{\rm stable} if
\begin{enumerate}[(i)]
\item $E$ is pure;
\item for every proper sub-pair $(E',\alpha^{\prime})$, 
\[p_{(E',\alpha^{\prime})}< p_{(E,\alpha)}\] 
\end{enumerate}
Semistability is defined similarly, replacing the strong inequality by the corresponding weak inequality.
\end{defn}
The second condition is equivalent to that for every proper quotient pair $(E'',\alpha^{\pprime})$ of dimension $d$, 
\[p_{(E'',\alpha^{\pprime})}> p_{(E,\alpha)}.\]

\vskip6pt
{\bf Convention.} In the rest of this paper, if stability is characterized by a strong inequality, semistability can be characterized by the corresponding weak inequality. So, in such a case, we will only make the statement for stability.
\vskip6pt
When the context is clear, we will omit $\delta$ and only say a pair is stable or semistable.

Clearly, a pair $(E,0)$ is (semi-)stable if and only if $E$ is (semi-)stable as a coherent sheaf. We will call a pair $(E,\alpha)$ {\it non-degenerate} if $\alpha\not=0$. We are primarily interested in non-degenerate semistable pairs, which we are going to parametrize.

A family of pairs parametrized by a scheme $T$ is a morphism of sheaves
\[\alpha_{T}:\pi_{2}^{*}E_{0}\to \mathscr E\]
over $T\times X$, such that $\mathscr E$ is flat over $T$. Here, $\pi_{2}$ is the projection 
\[T\times X\rightarrow X.\]
Two families $\alpha_{T}:\pi_2^*E_0\to \mathscr{E}$, $\beta_{T}:\pi_2^*E_0\to \mathscr{F}$ are equivalent, if there is an isomorphism 
\[\psi:\mathscr{E}\to\mathscr {F},\quad\mbox{such that }\psi\circ\alpha_{T}=\beta_{T}.\]

In the large $\delta$ regime, semistable pairs have some special features:
\begin{lem}When $\deg \delta\geq \deg P$, there is no non-degenerate strictly semistable pair, that is, every non-degenerate semistable pair is stable.\end{lem}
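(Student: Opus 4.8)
The plan is to analyze the equality case of the semistability inequality and show that, for a non-degenerate pair, it can never be attained by a proper sub-pair; semistability then forces stability. The key observation is that for $(E,\alpha)$ with $\alpha\neq 0$ we have $p_{(E,\alpha)}=p_{E}+\delta/r(E)$, and since every reduced Hilbert polynomial of a $d$-dimensional sheaf has leading term $m^{d}/d!$ while $\delta$ has degree $\geq d$ and positive leading coefficient, the added term $\delta/r(E)$ makes $p_{(E,\alpha)}$ strictly dominate, in the lexicographic order, any reduced Hilbert polynomial $p_{E'}$ of a $d$-dimensional sheaf: either $\deg\delta>d$ and $p_{(E,\alpha)}$ has strictly larger degree, or $\deg\delta=d$ and its degree-$d$ coefficient is strictly larger than $1/d!$.

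First I would use that a semistable pair has $E$ pure of dimension $d$ (condition (i)), so any nonzero subsheaf $E'\subsetneq E$ is again of dimension $d$, with $r(E')>0$ and $\deg p_{E'}=d$. I would then split into the two types of sub-pairs dictated by the definition. If $E'\not\supset\im\alpha$, then $\alpha'=0$, so $p_{(E',\alpha')}=p_{E'}$; by the domination just noted this is strictly smaller than $p_{(E,\alpha)}$, so the semistability inequality is automatically strict and equality is impossible.

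In the remaining case $E'\supset\im\alpha$ we have $\alpha'=\alpha\neq 0$, hence $p_{(E',\alpha')}=p_{E'}+\delta/r(E')$. Suppose for contradiction that $p_{(E',\alpha')}=p_{(E,\alpha)}$. Rearranging gives $p_{E'}-p_{E}=\delta\,(1/r(E)-1/r(E'))$; the left side has degree $\leq d-1$ because the leading terms cancel, while the right side has degree $\deg\delta\geq d$ unless its coefficient vanishes. Hence $r(E')=r(E)$, which forces $p_{E'}=p_{E}$ and therefore $P_{E'}=P_{E}$; then $E/E'$ has vanishing Hilbert polynomial, so $E'=E$, contradicting properness. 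Thus no proper sub-pair attains equality, and a non-degenerate semistable pair is stable.

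The only genuinely delicate point is the bookkeeping with the lexicographic order when $\deg\delta$ exceeds $d$: one must verify that a degree-$d$ polynomial with leading coefficient $1/d!$ is strictly smaller than one whose degree is raised by the $\delta$-term, and that the cancellation of leading terms in $p_{E'}-p_{E}$ really does drop the degree below $d$. Both become routine once purity guarantees that all relevant subsheaves are genuinely $d$-dimensional, which is why invoking condition (i) at the outset is essential.
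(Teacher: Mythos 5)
Your proof is correct and follows essentially the same route as the paper: assume equality for a proper sub-pair, compare leading coefficients (using $\deg\delta\geq d$) to force $\epsilon(\alpha')=1$ and $r(E')=r(E)$, then conclude $P_{E'}=P_{E}$ and hence $E'=E$, a contradiction. The only cosmetic difference is that you split into cases according to whether $E'\supset\im\alpha$, whereas the paper extracts $\epsilon(\alpha')=1$ directly from the same leading-coefficient comparison.
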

\begin{proof}Suppose $(G,\alpha^{\prime})$ is a sub-pair of a semistable $(E,\alpha)$, such that $p_{(G,\alpha^{\prime})}=p_{(E,\alpha)}$, i.e. 
\[p_{G}+\frac{\epsilon(\alpha^{\prime})\delta}{r(G)}= p_{E}+\frac{\delta}{r( E)}.\] 
Consider the leading coefficients. Because  
$\deg \delta\geq d$, $\epsilon(\alpha^{\prime})=1$ and $r(G)= r(F)$. Thus, $p_{E}=p_{G}$. Therefore, $P_{E}=P_{G}$, which implies that $G=E$. Hence, $(G,\alpha^{\prime})=(E,\alpha)$. We have shown that $(E,\alpha)$ is not strictly semistable.\end{proof}
We also have a reinterpretation of the stability condition. 
\begin{lem}\label{ReIntSS}Suppose $E$ is a pure coherent sheaf with Hilbert polynomial $P_{E}=P$ and multiplicity $r(E)=r$. If $\deg\delta\geq d=\deg P$, then a pair $(E,\alpha)$ is stable if and only if for every proper sub-pair $(G,\alpha^{\prime})$,
\[\frac{P_{G}}{2r(G)-\epsilon(\alpha^{\prime})}<\frac{P}{2r-\epsilon(\alpha)}.\]\end{lem}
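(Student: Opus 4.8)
The plan is to unwind both conditions into inequalities of reduced Hilbert polynomials and then verify that they agree \emph{sub-pair by sub-pair}. By Definition~\ref{Stability}, since purity of $E$ is part of the hypothesis, $(E,\alpha)$ is stable exactly when every proper sub-pair $(G,\alpha')$ satisfies
\[\frac{P_G+\epsilon(\alpha')\delta}{r(G)}<\frac{P+\epsilon(\alpha)\delta}{r}.\]
I would prove that, under the standing hypotheses, for each fixed proper sub-pair this inequality is equivalent to $\frac{P_G}{2r(G)-\epsilon(\alpha')}<\frac{P}{2r-\epsilon(\alpha)}$; the lemma then follows immediately. The degenerate case $\alpha=0$ is trivial, since then every sub-pair has $\alpha'=0$ and both inequalities collapse to $p_G<p_E$, i.e. to sheaf stability of $E$; so I would assume $\alpha\neq0$ and $\epsilon(\alpha)=1$. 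A useful first observation is that purity of $E$ forces every nonzero subsheaf $G\subseteq E$ to have dimension $d$, so its multiplicity $s:=r(G)$ is well defined and satisfies $1\leq s\leq r$.

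Next I would split on the value of $\epsilon(\alpha')$. When $\epsilon(\alpha')=0$ (that is, $G\not\supseteq\im\alpha$), I claim both inequalities hold automatically. For the first this is because $\deg\delta\geq d$ makes the right-hand side $\frac{P+\delta}{r}$ strictly dominate $\frac{P_G}{s}$ in the lexicographic order, either by strictly larger degree (if $\deg\delta>d$) or by strictly larger leading coefficient (if $\deg\delta=d$, where the top coefficients are $1+\delta_d/r>1$ against $1$). For the second it is because the leading coefficients are $\frac{r}{2r-1}$ against $\frac{s}{2s}=\frac12$, and $\frac{r}{2r-1}>\frac12$. So these sub-pairs never obstruct either formulation.

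The crux is the case $\epsilon(\alpha')=1$, i.e. $\alpha'=\alpha$ and $\im\alpha\subseteq G$. If $s=r$, both inequalities collapse to the \emph{same} condition $P_G<P$: on the left the $\delta$-terms cancel because the denominators coincide, and on the right $2r-1$ is a common denominator. If $s<r$, I would show both inequalities \emph{fail} together. On the left, comparing leading coefficients of $\frac{P_G+\delta}{s}$ and $\frac{P+\delta}{r}$, the relevant quantity is a decreasing function of the denominator (either $\frac{\delta_{\mathrm{top}}}{s}>\frac{\delta_{\mathrm{top}}}{r}$ when $\deg\delta>d$, or $1+\frac{\delta_d}{s}>1+\frac{\delta_d}{r}$ when $\deg\delta=d$). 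On the right, the leading coefficients are $\frac{s}{2s-1}$ and $\frac{r}{2r-1}$, and since $x\mapsto\frac{x}{2x-1}$ is strictly decreasing for $x\geq1$, again $\frac{s}{2s-1}>\frac{r}{2r-1}$. Hence both fail precisely when $s<r$ and both hold precisely when $s=r$ (and $P_G<P$).

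I expect the main difficulty to be bookkeeping rather than conceptual: one must handle the two regimes $\deg\delta>d$ and $\deg\delta=d$ uniformly, remember that polynomials are compared lexicographically (degree first, then leading coefficient), and invoke purity at the right moment to exclude a lower-dimensional $G$ carrying a nonzero $\alpha'$. The conceptual point that makes everything work is that $\frac{\delta}{r(G)}$ and the $\delta$-free expression $\frac{P_G}{2r(G)-\epsilon(\alpha')}$ encode the \emph{same} monotonicity in the multiplicity $r(G)$ --- each is destabilizing exactly when $r(G)<r$ --- which is precisely why the large-$\delta$ hypothesis permits trading one for the other.
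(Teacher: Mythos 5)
Your proof is correct and takes essentially the same route as the paper: both arguments work sub-pair by sub-pair, using the fact that $\deg\delta\geq d$ makes the $\delta$-terms dominate the lexicographic comparison, so that stability reduces to the condition $\epsilon(\alpha^{\prime})/r(G)\leq\epsilon(\alpha)/r$ (equivalently $\frac{r(G)}{2r(G)-\epsilon(\alpha^{\prime})}\leq\frac{r}{2r-\epsilon(\alpha)}$, by the monotonicity you note) together with $p_{G}<p_{E}$ in case of equality. Your case analysis on $\epsilon(\alpha^{\prime})$ and on $r(G)$ versus $r$ is just an explicit unpacking of the paper's compact chain of equivalences.
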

\begin{proof}When $\deg\delta\geq d$, for any proper sub-pair $(G,\alpha^{\prime})$, the inequality 
\[p_{G}+\epsilon(\alpha^{\prime})\frac{\delta}{r(G)}< p_{E}+\epsilon(\alpha)\frac{\delta}{r}\]
is equivalent to
\begin{equation}\label{InWords}
\frac{\epsilon(\alpha^{\prime})}{r(G)}\leq\frac{\epsilon(\alpha)}{r},\mbox{\quad and in case of equality, \quad}p_{G}< p_E.
\end{equation}
The latter can be easily seen to be equivalent to\begin{equation*}
\frac{r(G)}{2r(G)-\epsilon(\alpha^{\prime})}\leq\frac{r}{2r-\epsilon(\alpha)},\mbox{\quad and in case of equality, \quad}p_{G}< p_E.
\end{equation*}This last condition is equivalent to the inequality in the statement.
\end{proof}
Moreover, there is a geometric characterization of stability.
\begin{lem}\label{GenSurj}If $\deg \delta\geq \deg P$, then $(E,\alpha)$ is stable if and only if $E$ is pure and $\dim \coker\alpha<\deg P$.
\end{lem}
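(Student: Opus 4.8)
The plan is to prove both implications via the numerical reformulation in Lemma~\ref{ReIntSS}, using two consequences of purity: every nonzero subsheaf of the pure sheaf $E$ has dimension exactly $d$ (hence positive multiplicity), and the multiplicity $r(\cdot)=a_{d}(\cdot)$ is additive in short exact sequences. Since $\dim\coker\alpha<d$ forces $\alpha\neq0$ (otherwise $\coker\alpha=E$ has dimension $d$), the equivalence concerns non-degenerate pairs, in line with the paper's stated focus; I therefore take $\epsilon(\alpha)=1$ throughout, so that the characterization \eqref{InWords} underlying Lemma~\ref{ReIntSS} is available.

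For the forward direction, suppose $(E,\alpha)$ is stable. Then $E$ is pure by definition, so only the bound on $\dim\coker\alpha$ requires work. I would feed the distinguished sub-pair $(\im\alpha,\alpha)$, which has $\epsilon(\alpha')=\epsilon(\alpha)=1$, into the stability criterion. Since $E$ is pure and $\alpha\neq0$, the image $\im\alpha$ is a nonzero subsheaf, hence of dimension $d$ with $r(\im\alpha)>0$. The condition \eqref{InWords} gives $\tfrac{1}{r(\im\alpha)}\le\tfrac1r$, i.e. $r(\im\alpha)\ge r$; together with $r(\im\alpha)\le r(E)=r$ from $\im\alpha\subset E$ this forces $r(\im\alpha)=r$. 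Additivity of multiplicity in $0\to\im\alpha\to E\to\coker\alpha\to0$ then yields $r(\coker\alpha)=0$, so $\dim\coker\alpha<d$.

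For the converse, assume $E$ is pure and $\dim\coker\alpha<d$; then $\alpha\neq0$ and additivity again gives $r(\im\alpha)=r$. I would verify condition~(ii) against an arbitrary proper sub-pair $(G,\alpha')$, splitting into two cases according to whether $\im\alpha\subset G$. If $\im\alpha\subset G$, then $\alpha'=\alpha$, and the squeeze $r=r(\im\alpha)\le r(G)\le r(E)=r$ gives $r(G)=r$; since $G\subsetneq E$ with equal multiplicity, $E/G$ is a nonzero sheaf of dimension $<d$, whence $P_{G}=P_{E}-P_{E/G}<P_{E}$ lexicographically and so $p_{G}<p_{E}$, which is exactly the equality-case requirement of \eqref{InWords}. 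If instead $\im\alpha\not\subset G$, then $\alpha'=0$, so $\epsilon(\alpha')=0<1=\epsilon(\alpha)$ and the strict inequality $\tfrac{\epsilon(\alpha')}{r(G)}<\tfrac{\epsilon(\alpha)}{r}$ holds outright; equivalently, because $\deg\delta\ge d$, the $\delta$-term makes $p_{(E,\alpha)}$ strictly dominate $p_{(G,0)}=p_{G}$. In both cases the stability inequality holds, so $(E,\alpha)$ is stable.

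The main obstacle I anticipate is bookkeeping rather than a deep idea: one must consistently track the effect of $\epsilon$ across the two flavors of sub-pair and compare reduced Hilbert polynomials in the correct lexicographic order, invoking $\deg\delta\ge\deg P$ precisely at the degenerate case $\epsilon(\alpha')=0$. The cleanest route is to reduce both directions to the numerical characterization \eqref{InWords}, after which everything collapses to the single computation $r(\im\alpha)=r\Leftrightarrow\dim\coker\alpha<d$ via additivity of $a_{d}$ together with the purity of $E$, which is what guarantees that all the subsheaves entering the argument have full dimension $d$.
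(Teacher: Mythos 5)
Your proof is correct and takes essentially the same route as the paper's: the forward direction feeds the sub-pair $(\im\alpha,\alpha)$ into the numerical characterization (\ref{InWords}) to force $r(\im\alpha)=r(E)$, and the converse splits an arbitrary proper sub-pair $(G,\alpha^{\prime})$ into the two cases $\im\alpha\subset G$ and $\im\alpha\nsubset G$, exactly as in the paper. The only differences are cosmetic: you verify the strict inequalities of stability directly and make the non-degeneracy hypothesis explicit, whereas the paper records weak (semistable) inequalities and leaves non-degeneracy implicit.
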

\begin{proof}Let $G$ be the image of $\alpha$. A priori, $r(G)\leq r(E)$. The stability implies that $r( G)\geq r( E)$, as in (\ref{InWords}). 
Thus, $r(G)=r(E)$, which implies that $E/G$ has a Hilbert polynomial of degree strictly less than $d$. Hence, $\dim \coker\alpha<d$.

To prove the other direction, suppose $\dim \coker\alpha<d$. For an arbitrary subsheaf $G\subset E$, if $\im \alpha\subset G$, then $r( G)= r( E)$. We also have $P_G\leq P_E$, thus \[p_{G}+\frac{\delta}{r( G)}\leq p_{E}+\frac{\delta}{r(E)}.\]
If $\im \alpha\nsubset G$, then 
\[p_G\leq p_{E}+\frac{\delta}{r(E)}\]
 Therefore, the converse is also true.\end{proof}
Pairs share some similar properties of sheaves.
\begin{lem}\label{SmallLarge}Suppose $\phi:(E,\alpha)\to(F,\beta)$ is a nonzero morphism of pairs.
\begin{enumerate}[(i)]
\item Suppose $(E,\alpha)$ and $(F,\beta)$ are $\delta$-semistable of dimension $d$. Then 
\[p_{(E,\alpha)}\leq p_{(F,\beta)}.\]
    \item If $(E,\alpha)$ and $(F,\beta)$ are $\delta$-stable with the same reduced Hilbert polynomial, then $\phi$ induces an isomorphism between $E$ and $F$. In particular, for a stable pair 
    $(E,\alpha)$, \[{\rm End}((E,\alpha))\cong k.\]
\end{enumerate}\end{lem}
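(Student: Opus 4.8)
The plan is to run the classical image-factorization argument for a nonzero map between semistable objects, adapted to the bookkeeping of the maps from $E_0$. I would factor $\phi$ as the composite $E\xrightarrow{\,q\,}I\xrightarrow{\,\iota\,}F$, where $I=\im\phi$, $q$ is the induced surjection and $\iota$ the inclusion. Since $F$ is pure of dimension $d$ (semistability forces purity) and $I\neq0$, the subsheaf $I$ has dimension exactly $d$; likewise $\ker\phi$, if nonzero, has dimension $d$. I would then endow $I$ with two pair structures: as a quotient pair $(I,\alpha^{\pprime})$ of $(E,\alpha)$ with $\alpha^{\pprime}=q\circ\alpha$, and as a sub-pair $(I,\beta^{\prime})$ of $(F,\beta)$ with $\beta^{\prime}=\beta$ if $\im\beta\subseteq I$ and $\beta^{\prime}=0$ otherwise. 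Semistability of the two pairs then gives $p_{(E,\alpha)}\le p_{(I,\alpha^{\pprime})}$ (viewing $I$ as a quotient pair of $(E,\alpha)$, with equality when $q$ is an isomorphism) and $p_{(I,\beta^{\prime})}\le p_{(F,\beta)}$ (viewing $I$ as a sub-pair of $(F,\beta)$, with equality when $I=F$).

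To chain these into $p_{(E,\alpha)}\le p_{(F,\beta)}$ it suffices to establish the middle inequality $p_{(I,\alpha^{\pprime})}\le p_{(I,\beta^{\prime})}$. Since both pairs share the underlying sheaf $I$, and since $\delta$ has nonnegative leading coefficient while $r(I)>0$, this reduces to the combinatorial claim $\epsilon(\alpha^{\pprime})\le\epsilon(\beta^{\prime})$, i.e. that $\alpha^{\pprime}\neq0$ forces $\beta^{\prime}\neq0$. This is where the defining constant $b\in k$ of a morphism of pairs does the work: one has $\iota\circ\alpha^{\pprime}=\phi\circ\alpha=b\beta$, so if $\alpha^{\pprime}\neq0$ then, $\iota$ being injective, $b\beta\neq0$, whence $\im\beta=\im(b\beta)\subseteq\im\iota=I$ and therefore $\beta^{\prime}=\beta\neq0$. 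I expect this identification of $\epsilon(\alpha^{\pprime})$ with $\epsilon(\beta^{\prime})$ through $b$ to be the one genuinely nonformal step; everything else is routine bookkeeping with the definitions of sub- and quotient pairs. This completes part (i).

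For part (ii) I would assume in addition that $p_{(E,\alpha)}=p_{(F,\beta)}$, so that the chain of inequalities from part (i) collapses to equalities. The equality $p_{(E,\alpha)}=p_{(I,\alpha^{\pprime})}$ together with \emph{stability} of $(E,\alpha)$ forbids $(I,\alpha^{\pprime})$ from being a proper quotient pair of dimension $d$, forcing $\ker\phi=0$, so $\phi$ is injective; dually, $p_{(I,\beta^{\prime})}=p_{(F,\beta)}$ and stability of $(F,\beta)$ forbid $(I,\beta^{\prime})$ from being a proper sub-pair, forcing $I=F$, so $\phi$ is surjective. Hence $\phi$ is an isomorphism of sheaves.

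Finally, for the endomorphism statement I would apply the isomorphism result with $(F,\beta)=(E,\alpha)$: every nonzero $\phi\in{\rm End}((E,\alpha))$ is then invertible. For any $\lambda\in k$ the difference $\phi-\lambda\,\id$ is again an endomorphism of the pair, since $(\phi-\lambda\,\id)\circ\alpha=(b-\lambda)\alpha$. Because $\Hom(E,E)$ is a finite-dimensional $k$-algebra and $k$ is algebraically closed, the minimal polynomial of $\phi$ has a root $\lambda\in k$, and then $\phi-\lambda\,\id$ is a zero divisor in $\Hom(E,E)$, hence not invertible. By the isomorphism statement it must therefore vanish, i.e. $\phi=\lambda\,\id$, giving ${\rm End}((E,\alpha))=k\cdot\id\cong k$. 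The only point to keep in mind here is that $E$ itself need not be a stable \emph{sheaf}, so one cannot quote simplicity of $E$ directly; the eigenvalue argument must be routed through the pair-stability just proved.
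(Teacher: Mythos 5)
Your proof is correct and takes essentially the same route as the paper: factor $\phi$ through $I=\im\phi$, regard $I$ as a quotient pair of $(E,\alpha)$ and a sub-pair of $(F,\beta)$, extract the two inequalities from (semi)stability, force equalities in (ii), and obtain ${\rm End}((E,\alpha))\cong k$ by the finite-dimensional division-algebra (eigenvalue) argument over the algebraically closed field $k$. The only difference is that you make explicit the scalar-$b$ bookkeeping — proving the middle inequality $\epsilon(\alpha^{\pprime})\leq\epsilon(\beta^{\prime})$ relating the quotient-pair and sub-pair structures on $I$ — which the paper compresses into the single assertion that $(\im\phi,\alpha^{\pprime})$ is simultaneously a quotient pair of $(E,\alpha)$ and a sub-pair of $(F,\beta)$; this is a sound refinement of the same argument, not a different one.
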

\begin{proof}
(i) Let $\alpha^{\prime\prime}$ be $\phi\circ\alpha:E_{0}\to \im\phi$. Then $(\im\phi,\alpha^{\pprime})$ is a quotient pair of $(E,\alpha)$ and a sub-pair of $(F,\beta)$. Thus, 
\begin{equation}\label{nondecreasing-hilbert-poly}p_{(E,\alpha)}\leq p_{(\im\phi,\alpha^{\pprime})}\leq p_{(F,\beta)}.
\end{equation}

    (ii) Suppose not, then $\ker\phi\not=0$ or $\im\phi\not= E$. We also have the inequalities (\ref{nondecreasing-hilbert-poly}).
    But two equalities do not hold simultaneously, which contradicts the fact that the two stable pairs have the same reduced Hilbert polynomial. Therefore, $\ker\phi=0$ and $\im\phi= E$. Thus, $\phi$ is an isomorphism of coherent sheaves. Clearly, the inverse also provides an inverse of pairs. In particular, ${\rm End}((E,\alpha))$ is a finite dimensional associative division algebra over the algebraically closed field $k$, hence $k$.\end{proof}

\begin{prop}[Harder-Narasimhan Filtration]\label{HN} Let $(E,\alpha)$ be a pair where $E$ is pure of dimension $d$. Then there is a unique filtration by sub-pairs
\[0\subsetneqq (G_{1},\alpha_{1})\subsetneqq (G_2,\alpha_{2})\subsetneqq\cdots\subsetneqq (G_l,\alpha_{l})=(E,\alpha)\]
with 
\[\gr_i=
(G_i,\alpha_{i})/(G_{i-1},\alpha_{i-1})\] satisfying
\begin{enumerate}[(i)]
\item $\gr_i$ is $\delta$-semistable of dimension d for all $i$;
\item $p_{\gr_i}>p_{\gr_{i+1}}$, for all $i$.\end{enumerate}
We call this filtration the {\rm Harder-Narasimhan filtration} of the pair.\end{prop}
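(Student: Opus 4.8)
The plan is to follow the classical construction of the Harder--Narasimhan filtration for pure sheaves (as in Huybrechts--Lehn), adapting each step to carry the pair-theoretic correction $\epsilon(\alpha')\delta/r(G)$. The whole statement reduces to a single existence-and-uniqueness lemma: \emph{among the nonzero sub-pairs of $(E,\alpha)$ there is a unique maximal destabilizing sub-pair} $(G_{1},\alpha_{1})$, that is, a semistable sub-pair whose reduced Hilbert polynomial $p_{(G_{1},\alpha_{1})}$ is maximal and which contains every sub-pair attaining that maximal value. Granting this, I would take $(G_{1},\alpha_{1})$ to be the first step and induct on the multiplicity of the quotient: since $(G_{1},\alpha_{1})$ is maximal destabilizing, the quotient pair $(E/G_{1},\bar\alpha)$ has strictly smaller maximal reduced Hilbert polynomial, so it carries its own Harder--Narasimhan filtration; pulling this back to $E$ produces the filtration, and condition (ii), $p_{\gr_{i}}>p_{\gr_{i+1}}$, is precisely the statement that each step is maximal destabilizing in the corresponding quotient.

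To produce the maximal destabilizing sub-pair I would first establish boundedness. A sub-pair with reduced Hilbert polynomial above a fixed bound has leading slope bounded below, so, after replacing its underlying subsheaf by the saturation (which does not decrease $p$), it is saturated with pure quotient of dimension $d$; by Grothendieck's Theorem~\ref{GrSlope} these form a bounded family, and since the correction $\epsilon(\alpha')\delta/r(G)$ is bounded (as $\epsilon(\alpha')\in\{0,1\}$, $r(G)\geq 1$, and $\delta$ is fixed), only finitely many values $p_{(G,\alpha')}$ occur above the bound. Hence a maximum $p_{\max}$ is attained. The heart of the matter is uniqueness of the largest sub-pair realizing $p_{\max}$, which I would prove by a see-saw argument applied to the exact sequence $0\to G'\cap G''\to G'\oplus G''\to G'+G''\to 0$ for two sub-pairs with $p_{(G',\alpha')},p_{(G'',\alpha'')}= p_{\max}$. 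The new ingredient beyond the sheaf case is the behaviour of the $\epsilon$-correction: one checks the supermodularity
\[\epsilon\big(G'+G''\big)+\epsilon\big(G'\cap G''\big)\geq \epsilon(\alpha')+\epsilon(\alpha''),\]
because $\im\alpha\subseteq G'\cap G''$ forces $\im\alpha\subseteq G',G''$, while $\im\alpha\subseteq G'$ or $\im\alpha\subseteq G''$ forces $\im\alpha\subseteq G'+G''$. Combined with the usual convexity (mediant) property of reduced Hilbert polynomials in a short exact sequence, this shows $(G'+G'',\cdot)$ again has reduced Hilbert polynomial $\geq p_{\max}$; thus the sub-pairs achieving $p_{\max}$ are closed under sum, their sum $(G_{1},\alpha_{1})$ is the unique maximal one, and its maximality makes it semistable.

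One point to clear before the induction runs is that $G_{1}$ is saturated in $E$, so that $E/G_{1}$ is again pure of dimension $d$. This follows from maximality: passing from $G_{1}$ to its saturation $\bar G_{1}$ leaves $r$ unchanged, does not decrease $P$, and can only enlarge $\im\alpha\cap G_{1}$ (hence only increase $\epsilon$), so $p_{(\bar G_{1},\cdot)}\geq p_{(G_{1},\alpha_{1})}=p_{\max}$; uniqueness then forces $\bar G_{1}=G_{1}$. Finally, uniqueness of the whole filtration is the standard comparison: for any competing filtration with semistable quotients and strictly decreasing reduced Hilbert polynomials, the composite of the inclusion of its first step with the projection onto a graded piece of the first filtration, together with Lemma~\ref{SmallLarge}(i) — a nonzero morphism of semistable pairs of dimension $d$ never decreases the reduced Hilbert polynomial — forces that first step to be contained in, hence equal to, $(G_{1},\alpha_{1})$; induction on the quotient finishes the argument.

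I expect the main obstacle to be precisely the see-saw step in the pair setting: controlling how the non-additive correction $\epsilon(\alpha')\delta/r(G)$ interacts with sums and intersections of subsheaves, and combining the supermodularity inequality above with the convexity of reduced Hilbert polynomials (where division by the varying multiplicity $r(G)$ must be handled carefully) so that the class of sub-pairs attaining $p_{\max}$ is genuinely closed under addition. Everything else is a routine transcription of the pure-sheaf argument.
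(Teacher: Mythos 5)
Your overall route --- produce a unique maximal destabilizing sub-pair, show the sub-pairs attaining $p_{\max}$ are closed under sums via a see-saw argument, then induct on the saturated quotient --- is the right one, and it is essentially the argument the paper compresses into a single line (the paper observes that Hilbert polynomials of pairs are additive in short exact sequences of pairs and cites Shatz's proof for pure sheaves). Your supermodularity inequality $\epsilon(G'+G'')+\epsilon(G'\cap G'')\geq \epsilon(\alpha')+\epsilon(\alpha'')$ is correct and is exactly what replaces additivity in the see-saw step, where $G'+G''$ and $G'\cap G''$ do not form an exact sequence of sub-pairs; the saturation and uniqueness steps are also fine. The genuine gap is in the existence of $p_{\max}$. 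Your claim that ``a sub-pair with reduced Hilbert polynomial above a fixed bound has leading slope bounded below'' fails whenever $\deg\delta\geq\deg P$, which is precisely the case the paper is mainly concerned with (and the Proposition covers arbitrary $\delta$). When $\deg\delta\geq d$, the correction $\epsilon(\alpha')\delta/r(G)$ dominates the lexicographic order, so \emph{every} sub-pair $(G,\alpha')$ with $\im\alpha\subseteq G$ and $r(G)<r(E)$ satisfies $p_{(G,\alpha')}>p_{(E,\alpha)}$, with no constraint at all on $\mu(G)$. Concretely: take $X=\p^{2}$, $E_{0}=\oo_{X}$, $E=\oo_{X}^{\oplus 3}$, $\alpha$ the inclusion of the first summand, $\deg\delta\geq 2$, and $G_{k}=\oo_{X}\oplus\oo_{X}(-k)\oplus 0$. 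Then $\im\alpha\subseteq G_{k}$ and $r(G_{k})=2<3$, so
\[p_{(G_{k},\alpha')}=p_{G_{k}}+\tfrac{\delta}{2}>p_{E}+\tfrac{\delta}{3}=p_{(E,\alpha)}\quad\mbox{for every }k,\]
since the difference is $\delta/6$ plus terms of degree $\leq 1$; meanwhile $\mu(G_{k})\to-\infty$ and the values $p_{(G_{k},\alpha')}$ are pairwise distinct. So the family of sub-pairs above the bound $p_{(E,\alpha)}$ is unbounded and realizes infinitely many values of $p_{(G,\alpha')}$: both your slope claim and the finiteness conclusion drawn from it are false in this regime, and with them the deduction that the maximum is attained.

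The repair requires one idea beyond transcribing the sheaf case. First reduce to saturated sub-pairs: saturation preserves $r(G)$, can only increase $P_{G}$ and $\epsilon(\alpha')$, hence only increases $p_{(G,\alpha')}$. Then stratify saturated sub-pairs by the finitely many values of $(\epsilon(\alpha'),r(G))\in\{0,1\}\times\{1,\dots,r(E)\}$. Within one stratum the correction $\epsilon\delta/r$ is a single fixed polynomial, so maximizing $p_{(G,\alpha')}$ amounts to maximizing $p_{G}$; and comparisons of the $p_{G}$ do control slopes, because these reduced polynomials all have the same leading coefficient $1/d!$, so $p_{G}\geq p_{G_{0}}$ forces $\mu(G)\geq \mu(G_{0})$ for any anchor member $G_{0}$ of the stratum. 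Theorem~\ref{GrSlope}(ii) then bounds each stratum above its anchor, a bounded family realizes only finitely many Hilbert polynomials, so each stratum has an attained maximum, and $p_{\max}$ is the largest of these finitely many maxima. (Alternatively, for $\deg\delta\geq\deg P$ one can check directly that the maximal destabilizing sub-pair is the saturation of $\im\alpha$ in $E$.) With $p_{\max}$ in hand, the remainder of your argument goes through as written.
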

\begin{proof}Because Hilbert polynomials are additive in a short exact sequence of pairs, the proof is the same as the proof of the existence and uniqueness of the Harder-Narasimhan filtration of a pure sheaf \cite[Theorem 1]{MR0498573}.
\end{proof}

Evidently, in the filtration, there is only one nonzero $\alpha_i$. In the case where $\deg \delta\geq d$, only $\alpha_{1}$ is nonzero.

\begin{prop}[Jordan-H\"older Filtration] Let $(E,\alpha)$ be a semistable pair. There is a filtration
\[0\subsetneqq (F_{1},\alpha_{1})\subsetneqq (F_2,\alpha_{2})\subsetneqq\cdots\subsetneqq (F_l,\alpha_{l})=(E,\alpha),\]
such that each factor 
\[\gr_{i}=(F_i,\alpha_{i})/(F_{i-1},\alpha_{i-1})\]
 is stable with reduced Hilbert polynomial $p_{(E,\alpha)}$. Moreover, $\gr(E,\alpha)=\oplus \gr_{i}$ does not depend on the filtration.
\end{prop}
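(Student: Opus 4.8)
The plan is to follow the template of the Jordan--H\"older theorem for semistable sheaves, replacing Schur's lemma by Lemma~\ref{SmallLarge} and using that Hilbert polynomials of pairs are additive in short exact sequences. Throughout write $p=p_{(E,\alpha)}$, and let $\mathcal C_p$ denote the class of $\delta$-semistable pairs of dimension $d$ with reduced Hilbert polynomial $p$. The statement then amounts to showing that every object of $\mathcal C_p$ admits a finite filtration with stable factors in $\mathcal C_p$, and that the associated graded object is independent of the filtration.

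\emph{Existence.} I would argue by induction on the multiplicity $r(E)$. If $(E,\alpha)$ is stable there is nothing to prove. Otherwise semistability supplies a proper sub-pair with reduced Hilbert polynomial $p$; among all nonzero such sub-pairs I choose one, $(F_1,\alpha_1)$, that is minimal for inclusion. Such a minimal element exists by the descending chain condition, which holds here because a proper sub-pair sharing the reduced Hilbert polynomial $p$ must have strictly smaller multiplicity (equal multiplicity together with equal reduced Hilbert polynomial forces equal Hilbert polynomial, hence equality). Any proper nonzero sub-pair of $(F_1,\alpha_1)$ is again a sub-pair of $(E,\alpha)$, so has reduced Hilbert polynomial $\leq p$ by semistability and $<p$ by minimality; hence $(F_1,\alpha_1)$ is stable. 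Additivity gives $p_{(E/F_1,\bar\alpha)}=p$, and the standard argument---a destabilizing quotient pair of $(E/F_1,\bar\alpha)$ is also a quotient pair of $(E,\alpha)$---shows $(E/F_1,\bar\alpha)\in\mathcal C_p$. Since $r(E/F_1)<r(E)$, induction supplies a filtration of the quotient whose preimage in $(E,\alpha)$ extends $(F_1,\alpha_1)$ to the desired filtration.

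\emph{Independence of $\gr$.} Here I would verify that $\mathcal C_p$ is an abelian category whose simple objects are exactly the stable pairs, and then invoke the Jordan--H\"older theorem for Artinian abelian categories. Lemma~\ref{SmallLarge}(i) shows, via the chain~(\ref{nondecreasing-hilbert-poly}), that for a nonzero morphism $\phi$ in $\mathcal C_p$ the image, and hence the kernel and cokernel, again have reduced Hilbert polynomial $p$; the induced pair structures are the correct sub- and quotient-pair structures because the scalar $b$ in the definition of a morphism of pairs vanishes precisely when $\phi\circ\alpha=0$, i.e. exactly when $\im\alpha$ lies in $\ker\phi$. Granting that these subquotients remain pure and semistable, $\mathcal C_p$ is abelian; it is Artinian and Noetherian since multiplicity is a positive integer that is additive in short exact sequences. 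Lemma~\ref{SmallLarge}(ii) then identifies the simple objects of $\mathcal C_p$ with the stable pairs and plays the role of Schur's lemma, and the Jordan--H\"older theorem yields that both the length $l$ and the isomorphism classes of the factors $\gr_i$ are independent of the filtration, so $\gr(E,\alpha)=\bigoplus_i\gr_i$ is well defined.

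The main obstacle is precisely the verification that $\mathcal C_p$ is closed under kernels, images, and cokernels while preserving purity---equivalently, that every subquotient of a pair in $\mathcal C_p$ again lies in $\mathcal C_p$. This is the exact analogue of the corresponding statement for sheaves, and the only genuinely new bookkeeping is tracking the degeneracy of the induced map through kernels and cokernels; by the sub-pair convention at most one factor $\gr_i$ is non-degenerate, exactly as in the remark following Proposition~\ref{HN}. Once this closure property is settled, both existence and independence follow formally as above.
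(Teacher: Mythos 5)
Your proposal is correct and follows essentially the same route as the paper, whose entire proof is the observation that, given Lemma~\ref{SmallLarge} (the Schur-type lemma) and additivity of pair Hilbert polynomials, the standard Jordan--H\"older argument for semistable sheaves \cite[Proposition 1.5.2]{huybrechts-lehn-moduli-of-sheaves} carries over; your existence induction via a minimal destabilizing sub-pair and your identification of stable pairs as the simple objects are exactly that transfer. The closure property you flag as the remaining obstacle---that kernels, images, and cokernels of morphisms in $\mathcal C_p$ stay in $\mathcal C_p$---is the same routine verification the paper delegates to the citation: the image is sandwiched as in~(\ref{nondecreasing-hilbert-poly}), the kernel and cokernel then have reduced Hilbert polynomial $p$ by additivity, and purity of the cokernel follows by pulling back its torsion to a sub-pair of the target that would violate semistability.
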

\begin{proof}Since we have Lemma~\ref{SmallLarge}, 
the proof goes the same as the argument for Jordan-H\"older filtrations of a semistable sheaf, see e.g. \cite[Proposition 1.5.2]{huybrechts-lehn-moduli-of-sheaves}.
\end{proof}
Two semistable pairs are {\it S-equivalent}, if they have isomorphic Jordan-H\"older factors.
Let 
\[\mathcal S_{X}(P,\delta):\mathcal Sch_{/k}\to \mathcal Set\]
denote the moduli functor of S-equivalent non-degenerate semistable pairs of type $P$.
Let 
\[\mathcal S^{s}_{X}(P,\delta)\]
 denote the moduli functor of equivalence classes of non-degenerate stable pairs.
\vskip45pt

%------------------------------------
\section{Boundedness when $\deg \delta\geq\deg P$}\label{boundedness}
\vskip35pt
In order to construct the moduli space via GIT, we first need to prove that the family of semistable pairs is bounded.

In this and the next section, many statements are true either $\deg \delta \geq \deg P$ or $\deg\delta <\deg P$, but require different proofs. In these two sections, we will only treat the case where 
\[\deg \delta \geq \deg P=d.\]
 In Appendix, we will point out modifications needed for the proofs in the cases where $\deg\delta <d$.
 
We will show boundedness using Theorem~\ref{BddMuMax}, by studying $\mu_{\min}$ of sheaves underlying semistable pairs.

\begin{lem}\label{mu-min}
Fix the Hilbert polynomial $P$. Suppose $(E,\alpha)$ be a pair, which is semistable for some $\delta$, with $P_{E}=P$. Then, $\mu_{\min}(E)$ is bounded below by a constant depending on $P$ and $X$.
\end{lem}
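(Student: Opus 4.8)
\emph{Proposal.} The plan is to exploit non-degeneracy of the pair to manufacture a nonzero map from the fixed sheaf $E_0$ into the \emph{minimal-slope} graded piece of the slope Harder--Narasimhan filtration of $E$, and then to bound the slope of that piece from below using Simpson's global-section estimate, Lemma~\ref{SimpsonH0}. The point to keep in mind is that $E$ itself need not be semistable as a sheaf, so a priori $\mu_{\min}(E)$ is completely uncontrolled; everything hinges on coupling $E$ to $E_0$ through $\alpha$.

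First I would dispose of the degenerate case: if $\alpha=0$, then semistability of $(E,\alpha)$ simply says that $E$ is a semistable sheaf with $P_E=P$, whence $\mu_{\min}(E)=\mu(E)$ is determined by $P$. So assume $\alpha\neq0$. Since we are in the regime $\deg\delta\geq\deg P$, Lemma~\ref{GenSurj} gives that $E$ is pure and $\dim\coker\alpha<d$; consequently the image $G=\im\alpha$ has full multiplicity $r(G)=r(E)=r$. Next I take the slope Harder--Narasimhan filtration $0\subsetneq E_1\subsetneq\cdots\subsetneq E_l=E$ and set $E''=E_l/E_{l-1}$, the minimal graded piece, which is slope-semistable of dimension $d$ with $\mu(E'')=\mu_{\min}(E)$. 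Let $\bar\alpha\colon E_0\to E''$ be the composite of $\alpha$ with the projection $E\to E''$. The key claim is that $\bar\alpha\neq0$: if instead $\im\alpha\subseteq E_{l-1}$, then, comparing multiplicities of dimension-$d$ sheaves, $r=r(\im\alpha)\leq r(E_{l-1})<r$, a contradiction (and for $l=1$ one has $E_{l-1}=0$, so $\bar\alpha=\alpha\neq0$ directly).

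Now I fix, once and for all, an integer $m$ for which $E_0(m)$ is globally generated, i.e.\ a surjection $\oo_X(-m)^{\oplus N}\twoheadrightarrow E_0$. Composing with the nonzero $\bar\alpha$ yields a nonzero map $\oo_X(-m)^{\oplus N}\to E''$, so at least one summand gives $H^0(E''(m))=\Hom(\oo_X(-m),E'')\neq0$. Since $E''(m)$ is slope-semistable of dimension $d$, multiplicity $r''=r(E'')\leq r$, and slope $\mu(E'')+m$, Lemma~\ref{SimpsonH0} yields
\[
0<\frac{h^0(E''(m))}{r''}\leq\frac{1}{d!}\big([\mu(E'')+m+C]_{+}\big)^{d},
\]
where $C$ may be taken uniform over all multiplicities $r''\leq r$ and depends only on $d$ and $r$. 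Positivity of the left-hand side forces $\mu(E'')+m+C>0$, that is $\mu_{\min}(E)=\mu(E'')>-m-C$, a bound depending only on $E_0$, $X$, and $P$.

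The main obstacle is precisely the step that makes the argument work, namely producing the nonvanishing of $\bar\alpha$: this is where the pair-theoretic input (non-degeneracy together with $\dim\coker\alpha<d$, via Lemma~\ref{GenSurj}) is converted into a statement that pins down the otherwise-free minimal slope. Once that nonvanishing is secured, the remaining ingredients---the invariance of slope-semistability under twisting, the shift $\mu(E''(m))=\mu(E'')+m$, and the uniform choice of the Simpson constant over multiplicities $\leq r$---are routine, and the lower bound on $\mu_{\min}(E)$ follows immediately.
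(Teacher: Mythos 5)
Your proof is correct, and its skeleton coincides with the paper's: dispose of the degenerate case, use Lemma~\ref{GenSurj} to see that $\im\alpha$ has full multiplicity, deduce that the induced map from $E_0$ to the minimal slope Harder--Narasimhan factor $\gr_s E = E''$ is nonzero, and feed in global generation of $E_0(m)$ for an $m$ depending only on $E_0$ and $X$. (Your restriction to $\deg\delta\geq\deg P$ matches the scope of the proof given in Section~\ref{boundedness}; the case $\deg\delta<\deg P$ is treated separately in the appendix.) Where you genuinely diverge is in the final step, namely how the nonvanishing is converted into a numerical bound. The paper restricts to the scheme-theoretic support $Y$ of $E$, notes that $\alpha$ factors through $E_0|_Y$, so there is a nonzero map $H^0(E_0(m))\otimes\oo_Y(-m)\to \gr_s E$, whence $\mu_{\min}(E)\geq\mu_{\min}(\oo_Y)-m$ by slope-semistability of $\gr_s E$; it then quotes Le Potier's bound (Lemma~\ref{MuMin}) for $\mu_{\min}(\oo_Y)$ in terms of $n$, $r$, $d$. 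You instead extract a nonzero section of $E''(m)$ and apply Simpson's estimate (Lemma~\ref{SimpsonH0}) to the slope-semistable sheaf $E''(m)$: positivity of $h^0$ forces $\mu(E'')+m+C>0$, with $C$ taken uniform over the finitely many multiplicities $r''\leq r$. Both routes produce a constant independent of $\delta$, depending only on $E_0$, $X$, and $P$. Yours avoids the support $Y$ and Le Potier's structure-sheaf lemma entirely, at the price of invoking Simpson's $h^0$ bound --- a tool the paper needs anyway in the proof of Lemma~\ref{EquiSS2} --- so it is a legitimate and slightly more self-contained alternative; the paper's version, by bounding $\mu_{\min}(\oo_Y)$, stays purely slope-theoretic and never counts sections of $E''$.
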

We would like to emphasize that the constant is independent of $\delta$. 
\begin{proof}
Let $(E,\alpha)$ be a semistable pair. By Lemma~\ref{GenSurj},
\begin{equation}\label{gensurj}\dim \coker\alpha<d.\end{equation}
 Choose an $m$ large enough such that 
 \[H^{0}(E_{0}(m))\otimes\oo_{X}(-m)\twoheadrightarrow E_{0}.\]
 Let $Y$ be the scheme theoretic support of $E$.
 The morphism $\alpha$ factors through $E_{0}|_{Y}$. 
 We have the following sequence of morphisms
 \[H^{0}(E_{0}(m))\otimes\oo_{Y}(-m)\twoheadrightarrow E_{0}|_{Y}\to E\twoheadrightarrow \gr_{s}E,\]
 where the last morphism is the surjection from $E$ onto its last factor of Harder-Narasimhan filtration with respect to slope. 
 By (\ref{gensurj}), the composition is nonzero. 
 Therefore,
 \[\mu_{\min}(E)=\mu(\gr_{s}E)\geq\mu_{\min}(H^{0}(E_{0}(m))\otimes\oo_{Y}(-m))=\mu_{\min}(\oo_{Y}(-m))=\mu_{\min}(\oo_{Y})-m,\]
 where the last term is bounded below by Lemma~\ref{MuMin}. 
 Thus, $\mu_{\min}(E)$ is bounded below by a constant, which depends on $X$ and $P$.
\end{proof}
Combining Lemma~\ref{mu-min} and Theorem~\ref{BddMuMax}, we obtain the following boundedness result.

\begin{prop}\label{Bdd}Fix the Hilbert polynomial $P$. The family 
\[\{E|(E,\alpha)\mbox{ is a semistable pair of type }P\ \mbox{w.r.t. some }\delta.\}\]
 of coherent sheaves on $X$ is bounded.
\end{prop}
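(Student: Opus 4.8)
The plan is to obtain the result by feeding the preceding Lemma~\ref{mu-min} into Simpson's boundedness theorem (Theorem~\ref{BddMuMax}); essentially all of the work has already been done in Lemma~\ref{mu-min}, and this proposition is the packaging step. First I would record that every sheaf $E$ occurring in the family is pure with $P_E=P$, purity being part of the definition of a semistable pair, and that Lemma~\ref{mu-min} furnishes a \emph{single} constant $c$, depending only on $P$ and $X$ and in particular not on $\delta$, such that $\mu_{\min}(E)\geq c$ for every such $E$.

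Next I would convert this uniform lower bound on $\mu_{\min}$ into a uniform upper bound on $\mu_{\max}$, using that $P$ is fixed. Since $P_E=P$ for all members, the multiplicity $r=r(E)=a_d(E)$ and the coefficient $a_{d-1}(E)$ are the same across the family, and hence so is the total slope $\mu(E)=a_{d-1}(E)/a_d(E)$. Writing the slope-Harder--Narasimhan factors of $E$ with multiplicities $r_i\geq 1$ and slopes $\mu_1=\mu_{\max}(E)>\cdots>\mu_l=\mu_{\min}(E)\geq c$, additivity of $a_{d-1}$ in short exact sequences gives $\sum_i r_i\mu_i=a_{d-1}(E)$, whence $r_1\mu_1=a_{d-1}(E)-\sum_{i\geq 2}r_i\mu_i\leq a_{d-1}(E)-(r-r_1)c$. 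Since $1\leq r_1\leq r$, this bounds $\mu_{\max}(E)$ above by a constant $C$ again depending only on $P$ and $X$. This is precisely the equivalence between bounding $\mu_{\max}$ above and $\mu_{\min}$ below already noted after Theorem~\ref{BddMuMax}, so I could alternatively just invoke that remark.

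Finally I would apply Theorem~\ref{BddMuMax} with this constant $C$ to conclude that the family of such $E$ is bounded. I expect no genuine obstacle in this proposition itself: the substantive input is the $\delta$-independent lower bound on $\mu_{\min}$ supplied by Lemma~\ref{mu-min} (which in turn rests on Lemma~\ref{GenSurj} and the control of $\mu_{\min}(\oo_Y)$ from Lemma~\ref{MuMin}). The one point deserving emphasis is the uniformity in $\delta$: because $c$, and therefore $C$, does not depend on the stability parameter, a single application of Simpson's theorem covers all semistable pairs of type $P$ simultaneously, regardless of which $\delta$ witnesses their semistability.
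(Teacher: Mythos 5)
Your proposal is correct and is essentially the paper's own proof: the paper obtains Proposition~\ref{Bdd} precisely by combining Lemma~\ref{mu-min} with Theorem~\ref{BddMuMax}, with the passage from a lower bound on $\mu_{\min}$ to an upper bound on $\mu_{\max}$ justified by the remark following Theorem~\ref{BddMuMax} (additivity of Hilbert polynomials). Your explicit Harder--Narasimhan computation $\sum_i r_i\mu_i = a_{d-1}(E)$ just spells out that remark in detail, and your emphasis on uniformity in $\delta$ matches the paper's.
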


Next, we shall prove that, for a bounded family of pure pairs, the family of factors of their Harder-Narasimhan filtrations is bounded:
\begin{lem}\label{HNGrBdd}Suppose $\Phi:\pi_2^*E_0\to\mathscr E$ over $T\times X$ is a flat family of pure pairs over $X$ parametrized by a finite type scheme $T$. 
For each closed point $t\in T$, let $\{(\gr_i^t,\alpha_i^t)\}_{i\in I_t}$ be the Harder-Narasimhan factors of $(\mathscr E(t),\Phi(t))$, where $\mathscr E(t)=\mathscr E|_{\spec k(t)\times X}$ and $\Phi(t)$ is the corresponding morphism. 
Then, the family $\{\gr_i^t\}_{t\in T,i\in I_t}$ is bounded.
\end{lem}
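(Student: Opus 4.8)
The plan is to bound the section-carrying Harder--Narasimhan factor separately from the rest, the latter reducing to ordinary boundedness of Harder--Narasimhan factors of a quotient sheaf. First I would record that $\{\mathscr E(t)\}_{t\in T}$ is bounded, being the family of fibres of a coherent sheaf flat over the finite type base $T$; hence the Hilbert polynomials $P_{\mathscr E(t)}$ take only finitely many values, the multiplicities satisfy $r(\mathscr E(t))\le R$ for a fixed $R$ (so every filtration has length at most $R$), and, as for any bounded family, there is a uniform constant $c$ with $\mu_{\min}(\mathscr E(t))\ge c$. Next I would invoke the structure of the filtration in the large $\delta$ regime: by the remark following Proposition~\ref{HN}, in $0\subsetneqq(G_1,\alpha_1)\subsetneqq\cdots\subsetneqq(G_l,\alpha_l)=(\mathscr E(t),\Phi(t))$ only $\alpha_1$ is nonzero. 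Consequently $\gr_2^t,\dots,\gr_l^t$ all have vanishing section, hence are semistable sheaves with strictly decreasing reduced Hilbert polynomial, and thus constitute precisely the Harder--Narasimhan filtration of the quotient sheaf $\mathscr E(t)/G_1$; in particular $\mathscr E(t)/G_1$ is pure of dimension $d$.

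The crux is to bound the family $\{G_1\}$. Here I would use that $(G_1,\alpha_1)$, being the first Harder--Narasimhan factor, is itself a $\delta$-semistable pair of dimension $d$, so that the argument of Lemma~\ref{mu-min} applies to it: writing $Y_1$ for the support of $G_1$, it gives $\mu_{\min}(G_1)\ge\mu_{\min}(\oo_{Y_1})-m$, where $m$ depends only on the fixed sheaf $E_0$, and by Lemma~\ref{MuMin} the term $\mu_{\min}(\oo_{Y_1})$ is bounded below by a constant depending only on $n$, $d$ and the multiplicity $r(G_1)$. Since $r(G_1)$ ranges over the finite set $\{1,\dots,R\}$, this produces a single constant $c_1$ with $\mu_{\min}(G_1)\ge c_1$ for all $t$. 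Combined with the purity of $\mathscr E(t)/G_1$ from the first step, Grothendieck's Theorem~\ref{GrSlope}(ii), in its relative form over the bounded family $\{\mathscr E(t)\}$, then shows that $\{G_1\}$ is bounded.

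Finally I would descend to the remaining factors. As $\{\mathscr E(t)\}$ and $\{G_1\}$ are both bounded, so is the family of quotients $\{\mathscr E(t)/G_1\}$, and $\gr_2^t,\dots,\gr_l^t$ are its Harder--Narasimhan factors. I would bound these by peeling them off one at a time: $\gr_i^t$ is the maximal destabilizing subsheaf of $\mathscr E(t)/G_{i-1}$, it is semistable so that $\mu_{\min}(\gr_i^t)=\mu(\gr_i^t)\ge\mu_{\min}(\mathscr E(t)/G_{i-1})\ge\mu_{\min}(\mathscr E(t))\ge c$, and $\mathscr E(t)/G_i$ is again pure of dimension $d$; hence Theorem~\ref{GrSlope}(ii) bounds $\{\gr_i^t\}$ together with $\{\mathscr E(t)/G_i\}$, and the recursion terminates after at most $R$ steps. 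Assembling the finitely many bounded families $\{G_1\}$ and $\{\gr_i^t\}_{i\ge2}$ yields the asserted boundedness of $\{\gr_i^t\}_{t\in T,\,i\in I_t}$. I expect the middle step to be the real obstacle: in contrast to $\gr_2^t,\dots,\gr_l^t$, the factor $G_1$ is only a subsheaf of a bounded family and carries no lower bound on $\mu_{\min}$ visible from the sheaf data alone, so that bound must be manufactured from the pair semistability through Lemma~\ref{mu-min}, the essential point being that the resulting constant depends only on the bounded multiplicity of $G_1$ and not on its full Hilbert polynomial, making it uniform in $t$.
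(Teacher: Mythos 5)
Your proof is correct, but it takes a genuinely different route from the paper's. The paper's proof imitates the sheaf-theoretic argument of \cite[Theorem 2.3.2]{huybrechts-lehn-moduli-of-sheaves}: it performs noetherian induction on $T$, defines the set $A$ of invariants $(P^{\prime\prime},\epsilon^{\prime\prime})$ of destabilizing pure quotient pairs (finite by Theorem~\ref{GrSlope}), picks the $\preceq$-minimal element realized at the generic point, parametrizes the corresponding minimal destabilizing quotients over an open $U\subset T$ by a subscheme of $\quot^{P_{-}}(\mathscr E)$, and iterates on the kernel of the universal quotient and on the components of $T\setminus U$; as the paper remarks, that argument is independent of $\deg\delta$. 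You instead exploit the structure special to $\deg\delta\geq\deg P$: only $\gr_1$ carries the nonzero morphism, so $\gr_2^t,\dots,\gr_l^t$ are precisely the sheaf-theoretic Harder--Narasimhan factors of the pure quotient $\mathscr E(t)/G_1$, and everything reduces to bounding $\{G_1\}$. Your key observation --- that the lower bound on $\mu_{\min}(G_1)$ produced by the argument of Lemma~\ref{mu-min} (via Lemma~\ref{GenSurj} and Lemma~\ref{MuMin}) depends only on $E_0$, $n$, $d$ and the multiplicity $r(G_1)\leq R$, not on the unknown Hilbert polynomial $P_{G_1}$ --- is exactly right, and combined with the purity of $\mathscr E(t)/G_1$ and the relative form of Theorem~\ref{GrSlope}(ii) it does bound $\{G_1\}$; the subsequent peeling of the remaining factors is standard (it is the sheaf case of the present lemma). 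What your route buys is a shorter, more elementary argument with transparent uniform constants and no Quot-scheme stratification; what it gives up is generality: for $\deg\delta<\deg P$ the nonzero $\alpha_i$ can sit anywhere in the filtration, there is no single distinguished sub-pair to bound first, and your reduction breaks down, whereas the paper's proof covers all $\delta$ and moreover yields the stronger output that the factors are parametrized by finitely many Quot schemes. Since the lemma is stated and invoked only in the regime $\deg\delta\geq\deg P$, your restriction is harmless for the paper's purposes, but it should be stated explicitly (and you should note in passing that fibres with $\Phi(t)=0$ have no section-carrying factor and are handled by the sheaf-theoretic peeling alone).
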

The following proof is very similar to the proof of the corresponding statement about the boundedness of Harder-Narasimhan factors of pure sheaves \cite[Theorem 2.3.2]{huybrechts-lehn-moduli-of-sheaves}.
The proof is independent of $\deg\delta$.
\begin{proof}
We can assume $T$ to be integral. 
Define $A$ as the set of $2$-tuples $(P'',\epsilon'')$, such that there is a $t\in T$ and a pure quotient $q:\mathscr E(t)\twoheadrightarrow E''$ with Hilbert polynomial $P_{E^{\prime\prime}}=P^{\prime\prime}$ and $\epsilon''=\epsilon(q\circ\Phi(t))$, which destabilizes $(\mathscr E(t),\Phi(t))$:
\[p''+\frac{\epsilon''\delta}{r''}<p+\frac{\epsilon(\Phi(s))\delta}{r}.\]
Here, $p$ and $p''$ denote the corresponding reduced Hilbert polynomials, $r$ and $r''$ denote the multiplicities. 
From this inequality, we know
 that $\mu(E'')$ is bounded above by a constant determined by $P$ and $\delta$. Therefore, $A$ is a finite set by Theorem~\ref{GrSlope}.
 
If this set is empty, then all pairs are semistable. Then, we are done. Otherwise, let's consider whether there is a $(P_{-},\epsilon_{-})$, which is minimal with respect to the total order $\preceq$ and satisfies the condition that for a generic point $t\in T$, there is a pure quotient $q:\mathscr E(t)\to F$ with 
\begin{equation}\label{minimum-reduced-hilbert-polynomial}P_{F}=P_{-}\quad \mbox{and} \quad\epsilon(q\circ \Phi(t))=\epsilon_{-}.\end{equation}
 The order $\preceq$ is defined as follows: $(P_{1},\epsilon_{1})\preceq(P_{2},\epsilon_{2})$
 \[\mbox{if }\ p_{1}+\epsilon_{1}\delta/r_{1}\leq p_{2}+\epsilon_{2}\delta/r_{2},\mbox{ and in the case of }=, \,\,P_{1}\geq P_{2}.\]
 This is to pick out the maximal semistable quotient pair with the minimum reduced Hilbert polynomial.
 \vskip6pt
(i) If there is no such a $(P_{-},\epsilon_{-})$, then generically, say over the open subscheme $U\subset T$, pairs are already semistable. 
 
(ii) If there is such a $(P_{-},\epsilon_{-})$, let $U\subset T$ be the open family having quotients satisfying the condition (\ref{minimum-reduced-hilbert-polynomial}). The minimal Harder-Narasimhan factors of pairs in $U$ are parametrized by a subscheme of $\quot^{P_{-}}(\mathscr E)$. To parametrize all the Harder-Narasimhan factors of pairs parametrized by $U$, we can iterate the above process for the kernel, which is flat, of the universal quotient over $\quot^{P_{-}}(\mathscr E)$. This process will terminate due the multiplicity reason.
\vskip6pt

Then, we can run the same algorithm for pairs parametrized by irreducible components of the complement $T\setminus U$. Because $T$ is notherian, the process will terminate.

We have thus parametrized the Harder-Narasimhan factors by a finite sequence of Quot schemes.
\end{proof}

The following statement enables us to handle the semistability condition via spaces of global sections, instead of Hilbert polynomials.

\begin{lem}\label{EquiSS2}Fix $P$ and $\delta$ with $\deg\delta\geq\deg P$. Then there is an $m_{0}\in \Z_{>0}$, such that for any integer $m\geq m_{0}$ and any pair $(E,\alpha)$, where $E$ is pure with $P_{E}=P$ and multiplicity $r(E)=r$, the following assertions are equivalent.
\begin{enumerate}[i)]
\item The pair $(E,\alpha)$ is stable.
\item $P_{E}(m)\leq h^{0}(E(m))$, and for any proper sub-pair $(G,\alpha^{\prime})$ with $G$ of multiplicity $r(G)$,
\[\frac{h^{0}(G(m))}{2r(G)-\epsilon(\alpha^{\prime})}< \frac{h^{0}(E(m))}{2r-\epsilon(\alpha)}.\]
\item For any proper quotient pair $(F,\alpha^{\prime\prime})$ with $F$ of dimension $d$ and multiplicity $r(F)$,
\[\frac{h^{0}(F(m))}{2r(F)-\epsilon(\alpha^{\pprime})}>  \frac{P(m)}{2r-\epsilon(\alpha)}.\]
\end{enumerate}
\end{lem}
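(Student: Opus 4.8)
The plan is to reduce everything to the Hilbert-polynomial reformulation of stability already recorded in Lemma~\ref{ReIntSS}, and then to convert the comparison of reduced Hilbert polynomials into a comparison of dimensions of spaces of global sections by choosing $m$ so large that, on the one hand, $h^0(E(m))$ computes $P(m)$, and on the other hand the normalized quantity $h^0(G(m))/(2r(G)-\epsilon(\alpha'))$ is controlled uniformly over \emph{all} sub-pairs. The content of the lemma is precisely that a single threshold $m_0$ can be found that works for every pair $(E,\alpha)$ of type $P$ simultaneously, and this uniformity is exactly what the boundedness results of this section supply.

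First I would pin down the behaviour of $E$ itself. By Proposition~\ref{Bdd} the sheaves underlying semistable pairs of type $P$ form a bounded family, so Serre vanishing produces an $m_1$ with $h^i(E(m))=0$ for $i>0$, hence $h^0(E(m))=P_E(m)=P(m)$, for all $m\geq m_1$ and all such $E$. This already accounts for the clause $P_E(m)\leq h^0(E(m))$ of (ii) — in fact with equality — and it fixes the common denominators, since the right-hand sides of (ii) and (iii) are $h^0(E(m))/(2r-\epsilon(\alpha))=P(m)/(2r-\epsilon(\alpha))$.

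The heart of the matter is the direction (i)$\Rightarrow$(ii), and I expect it to be the main obstacle, because an \emph{arbitrary} saturated subsheaf $G\subseteq E$ need not lie in any bounded family: its $\mu_{\min}(G)$ can be arbitrarily negative, so $h^0(G(m))$ need not equal $P_G(m)$ for the fixed $m$. The remedy is the Le Potier--Simpson estimate recorded in Lemma~\ref{SimpsonH0}. Passing to the saturation of $G$ (which fixes the multiplicity and only increases $h^0$) and decomposing into slope-semistable Harder--Narasimhan factors, I would bound $h^0(G(m))$ from above by summing the contributions of the factors through Lemma~\ref{SimpsonH0}; since $G\subseteq E$ gives $\mu_{\max}(G)\leq\mu_{\max}(E)$, and $\mu_{\max}(E)$ is uniformly bounded over the bounded family of Proposition~\ref{Bdd}, this estimate stays close enough to $P_G(m)$ that, after enlarging $m_1$ to an $m_2$ absorbing the finitely many error terms uniformly, the strict polynomial inequality of Lemma~\ref{ReIntSS} — tested at the single value $m$, where a strict inequality of lexicographically ordered polynomials becomes a strict numerical inequality for $m$ large — upgrades to the normalized global-section inequality of (ii). This comparison, valid across the unbounded range of test subsheaves, is the one genuinely delicate point.

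For the converse (ii)$\Rightarrow$(i) I would argue by contraposition: if $(E,\alpha)$ is not stable, then by Proposition~\ref{HN} its maximal destabilizing sub-pair $(G_1,\alpha_1)$ already violates the inequality of Lemma~\ref{ReIntSS}. By Lemma~\ref{HNGrBdd} the Harder--Narasimhan factors, hence $G_1$, range over a bounded family, so past a further threshold $m_3$ one has $h^0(G_1(m))=P_{G_1}(m)$ exactly, and the violated polynomial inequality transfers verbatim to a violation of the global-section inequality, contradicting (ii). Finally, (ii)$\Leftrightarrow$(iii) is formal once $m\geq m_0:=\max\{m_1,m_2,m_3\}$: for a sub-pair $(G,\alpha')$ with corresponding quotient pair $(F,\alpha'')$ the vanishing $H^1(G(m))=0$ makes $0\to H^0(G(m))\to H^0(E(m))\to H^0(F(m))\to 0$ exact, so $h^0(E(m))=h^0(G(m))+h^0(F(m))$, and together with $r=r(G)+r(F)$ and the evident relations among the $\epsilon$'s this identifies the sub-pair inequalities with the quotient-pair inequalities — exactly as the reformulation following Definition~\ref{Stability} identifies the two forms of stability. (For the quotient formulation one may also note directly that a destabilizing pure quotient $F$ has $\mu_{\min}(F)\geq\mu_{\min}(E)$ bounded below by Lemma~\ref{mu-min} and bounded average slope, forcing $\mu_{\max}(F)$ bounded and hence $F$ bounded, which gives the converse cleanly on that side as well.)
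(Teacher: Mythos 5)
Your toolkit is the right one (Proposition~\ref{Bdd} plus Serre vanishing, Lemma~\ref{SimpsonH0}, saturation, Lemma~\ref{ReIntSS}, Lemma~\ref{HNGrBdd}, the mediant inequality), but at both places you yourself flag as delicate, the argument as written fails. In i)$\Rightarrow$ii) you cannot ``absorb finitely many error terms'' nor test the polynomial inequality of Lemma~\ref{ReIntSS} ``at the single value $m$'': a strict inequality of polynomials becomes a numerical inequality only beyond a threshold depending on the two polynomials being compared, and the sub-pairs $(G,\alpha')$ of all stable pairs of type $P$ realize infinitely many Hilbert polynomials $P_G$, so no single $m_0$ arises this way. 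Nor does the Simpson estimate stay ``close to $P_G(m)$'': when $\mu_{\min}(G)\ll 0$ it is nowhere near $P_G(m)$, and in the borderline case where the leading coefficients in ii) agree (e.g.\ $r(G)=r$, $\epsilon(\alpha')=\epsilon(\alpha)=1$, which occurs for saturated subsheaves with lower-dimensional quotient) a bound through $\mu_{\max}(E)$ alone is too coarse at subleading order. The missing idea is a dichotomy in $\nu=\mu_{\min}(G)$. Choose $\nu_0$ with $B+\mu(1-\frac1r)+\frac{\nu_0}{r}<-A$, where $\mu$ is a uniform bound for $\mu_{\max}(E)$ and $A$ bounds the roots of $P$; the refined estimate $h^0(G(m))/r(G)\leq\frac{1}{d!}\big((1-\frac1r)([\mu+m+B]_+)^d+\frac1r([\nu+m+B]_+)^d\big)$ then gives, for $\nu\leq\nu_0$ and $m$ uniformly large, the inequality $h^0(G(m))<\frac{r(G)}{r}P(m)$ outright, which suffices by (\ref{InWords}); whereas for $\nu>\nu_0$ the saturated $G$'s form a bounded family by Theorem~\ref{GrSlope}, so only finitely many $P_G$ occur, and only then does the polynomial inequality of Lemma~\ref{ReIntSS} transfer to the fixed value $m$.

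Your converse direction is circular. Lemma~\ref{HNGrBdd} concerns Harder--Narasimhan factors of a flat family over a base of finite type; the pure pairs of type $P$ that fail to be stable do not form a bounded family, so you may not invoke it for the destabilizing sub-pair $G_1$, and your parenthetical alternative via Lemma~\ref{mu-min} presupposes semistability of $(E,\alpha)$, which is exactly what is to be proven. This is why the paper proves iii)$\Rightarrow$i) rather than ii)$\Rightarrow$i): condition iii), applied to the quotient onto the last slope-HN factor of $E$ and combined with Lemma~\ref{SimpsonH0} and $P(m)/r\geq (m-A)^d/d!$, first forces $\mu_{\min}(E)\geq -A-C$, i.e.\ boundedness of the sheaves satisfying iii); only inside this bounded family is Lemma~\ref{HNGrBdd} legitimately applied to transfer the polynomial comparison and force the pair-HN filtration to have length one. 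Relatedly, your claim that ii)$\Leftrightarrow$iii) is ``formal'' requires $H^1(G(m))=0$ for an arbitrary sub-pair, which is not available (and in the iii)$\Rightarrow$ii) direction is genuinely needed for surjectivity on sections); the paper uses only left exactness of $H^0$ for ii)$\Rightarrow$iii) (a mediant inequality) and closes the equivalence cyclically as i)$\Rightarrow$ii)$\Rightarrow$iii)$\Rightarrow$i), never proving iii)$\Rightarrow$ii) directly.
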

The proof is modified from that of a similar statement in \cite{LePotier93}.
\begin{proof} The proof will proceed as follows: $i)\Rightarrow ii)\Rightarrow iii)\Rightarrow i)$. The integer $m_{0}$ will be determined in the course of the proof, non-explicitly.
\vskip6pt
i) $\Rightarrow$ ii): The family of sheaves underlying semistable pairs with a fixed Hilbert polynomial is bounded. Thus, there is $m_{0}\in \N$
 such that for any integer $m\geq m_{0}$, $H^{i}(E(m))=0$, $\forall i>0$. In particular, 
\[P(m)=h^{0}(E(m)).\]
 In the course of proving the boundedness, we also prove that $\mu_{\max}(E)$ is bounded above, say
\[\mu_{\max}(E)\leq \mu.\] 
For a proper sub-pair $(G,\alpha^{\prime})$ of multiplicity $r(G)$, consider the Harder-Narasimhan filtration of $G$ with respect to slope. 
Let 
\[\nu=\mu_{\min}(G).\] 
By Lemma~\ref{SimpsonH0}, we can find a constant $B$ depending on $r$ and $d$, 
such that
\begin{eqnarray}\label{1to2BySimpson}
\frac{h^{0}(G(m))}{r(G)}
\leq\frac{1}{d!}\big((1-\frac{1}{r})([\mu+m+B]_{+})^{d}+\frac{1}{r}([\nu+m+B]_{+})^{d}\big)
\end{eqnarray}
Choose a constant $A>0$,
 which is larger than all roots of $P$. Replace $m_{0}$ by $\max\{m_{0},A\}$. Then \[h^{0}(E(m))=P(m)\geq\frac{r}{d!}(m-A)^{d},\ \forall m\geq m_{0}.\]
Suppose $\nu_{0}$ is an integer such that 
\[B+\mu(1-\frac{1}{r})+\frac{\nu_{0}}{r}<-A.\]

 Enlarge $m_{0}$ if necessary, we have 
\begin{equation}\label{1to2}
\frac{1}{d!}\big((1-\frac{1}{r})([\mu+m+B]_{+})^{d}+\frac{1}{r}([\nu_{0}+m+B]_{+})^{d}\big)<\frac{P(m)}{r},\ 
\forall m\geq m_{0},\end{equation}
by considering the first and the second leading coefficients. 
Thus, when $m\geq m_{0}$ and $\nu\leq\nu_{0}$, combining (\ref{1to2BySimpson}) and (\ref{1to2}), we get
 \begin{equation}\label{nu<nu0}h^{0}(G(m))<\frac{r(G)}{r}h^{0}(E(m))\leq\frac{2r(G)-\epsilon(\alpha^{\prime})}{2r-\epsilon(\alpha)}h^{0}(E(m)).\end{equation}
  The last weak inequality is a consequence of (\ref{InWords}).

We are left to consider the case where $\nu>\nu_{0}$. First, notice that we can assume $E/G$ to be pure. If not, consider the saturation of $G$ in $E$, namely, the smallest $\bar G\supset G$, such that $E/\bar G$ is pure. If we can prove the inequality in ii) for $\bar G$, then it's also true for $G$, since 
\[r(G)=r(\bar G)\quad \mbox{and} \quad h^{0}(G(m))\leq h^{0}(\bar G(m)).\]
Notice that $\mu(G)\geq\nu>\nu_{0}$, the family of such $G$ is bounded, by Theorem~\ref{GrSlope}. So, there are only finitely many Hilbert polynomials of the form $P_{G}$ for such $G$. Moreover, we can enlarge $m_{0}$ again, if necessary, such that for $m\geq m_{0}$, 
\[P_{G}(m)=h^{0}(G(m))\quad \mbox{and}\]
\[\frac{P_{G}}{2r(G)-\epsilon(\alpha^{\prime})}< \frac{P}{2r-\epsilon(\alpha)} \iff \frac{P_{G}(m)}{2r(G)-\epsilon(\alpha^{\prime})}< \frac{P(m)}{2r-\epsilon(\alpha)}.\]
Therefore, by Lemma~\ref{ReIntSS} and (\ref{nu<nu0}), 
\[\frac{h^{0}(G(m))}{2r(G)-\epsilon(\alpha^{\prime})}< \frac{h^{0}(E(m))}{2r-\epsilon(\alpha)}.\]
\vskip6pt
ii) $ \Rightarrow$ iii): From a proper quotient pair $(F,\alpha^{\pprime})$, we can get a short exact sequence
\[0\to(G,\alpha^{\prime})\to(E,\alpha)\to(F,\alpha^{\pprime})\to 0.\]We thus obtain an exact sequence
\begin{equation}\label{2to3}0\to H^{0}(G(m))\to H^{0}(E(m))\to H^{0}(F(m)).\end{equation}
Therefore, 
\[h^{0}(F(m))\geq h^{0}(E(m))-h^{0}(G(m)).\]
 Notice that 
 \[r(E)=r(G)+r(F)\quad \mbox{and} \quad \epsilon(\alpha)=\epsilon(\alpha^{\prime})+\epsilon(\alpha^{\pprime}).\]
Thus, 
\[\frac{h^{0}(F(m))}{2r(F)-\epsilon(\alpha^{\pprime})}\geq \frac{h^{0}(E(m))-h^{0}(G(m))}{(2r-\epsilon(\alpha))-(2r(G)-\epsilon(\alpha^{\prime}))}> \frac{h^{0}(E(m))}{2r-\epsilon(\alpha)}\geq\frac{P(m)}{2r-\epsilon(\alpha)}.\]

\vskip6pt
iii) $\Rightarrow$ i): Take the Harder-Narasimhan filtration of $E$ with respect to slope. Suppose $F$ is the last factor, then $\mu(F)=\mu_{\min}(E)$, denoted as $\mu''$. 
By Lemma~\ref{SimpsonH0},
\begin{equation}\label{3to1}
 \frac{h^0(F(m))}{r(F)}\leq\frac{1}{d!}([\mu''+m+C]_{+})^{d}.\end{equation}
Let $(F,\alpha^{\pprime})$ be the induced quotient pair. If $\epsilon(\alpha^{\pprime})\not=0$, then $(E,\alpha)$ is stable, since in the Harder-Narasimhan filtration, only the first morphism is nonzero. So, assume 
 $\epsilon(\alpha^{\pprime})=0$. Then
\[\frac{P(m)}{r}<\frac{2P(m)}{2r-\epsilon(\alpha)}<\frac{h^{0}(F(m))}{2r(F)}\leq\frac{1}{d!}([\mu''+m+C]_{+})^{d}.\]
 If $m\geq m_{0}$, the preceding inequality with $P(m)/r \geq (m-A)^{d}/d!$ implies that $m-A\leq \mu''+m+C$. Therefore, 
 \[\mu_{\min}(E)=\mu''\geq-A-C.\]
   Thus, 
   the family of coherent sheaves satisfying the third condition for an $m\geq m_{0}$ is bounded.

Let $\gr_{s}=(\gr_{s}E,\gr_{s}\alpha)$ denote the last Harder-Narasimhan factor of the pair $(E,\alpha)$.
Then
\[\frac{h^{0}(\gr_{s}E(m))}{2r(\gr_{s}E)-\epsilon(\gr_{s}\alpha)}>  \frac{P(m)}{2r-1}.\]
By Lemma~\ref{HNGrBdd}, enlarge $m_{0}$ if necessary, we can assume that, $\forall m\geq m_{0}$,
\begin{enumerate}[(i)]
\item $h^{0}(\gr_{s}E(m))=P_{\gr_{s}E}(m)$;
\item \[\frac {P_{\gr_{s}E}(m)}{2r(\gr_{s}E)-\epsilon(\gr_{s}\alpha)}> \frac{P(m)}{2r-1}\iff \frac {P_{\gr_{s}}}{2r(\gr_{s}E)-\epsilon(\gr_{s}\alpha)}>  \frac{P}{2r-1}.\]
\end{enumerate}
Therefore, 
\[\frac{\epsilon(\gr_{i}\alpha)}{r(\gr_{s}E)}\geq \frac{1}{r},\] 
which implies $\epsilon(\gr_{s}\alpha)=1$. Thus, $s=1$, which means $(E,\alpha)$ is semistable, thus stable.
\end{proof}
Replacing the strong inequalities by weak inequalities, the lemma is also true.
\vskip45pt
%------------------------------------

\section{Construction of the moduli space when $\deg \delta\geq\deg P$}\label{construction}
\vskip35pt

Fix the smooth projective variety $(X,\oo_{X}(1))$, the coherent sheaf $E_{0}$, the Hilbert polynomial $P$, and the stability condition $\delta$.

By boundedness results proven in the last section,  
there is an $N\in \Z$ such that for any integer \[m>N,\]
 the following conditions are satisfied:\begin{enumerate}[(i)]
\item $E_{0}(m)$ is globally generated. 
\item $E(m)$ is globally generated and has no higher cohomology, for every $E$ appearing in a $\delta$-semistable pair (Proposition~\ref{Bdd}). Similar results hold for their Harder-Narasimhan factors (Lemma~\ref{HNGrBdd}).
\item The three assertions in 
Lemma~\ref{EquiSS2}  
  are equivalent.
\end{enumerate}
Fix such an $m$ and let $V$ be a vector space such that
\[\dim V=P(m).\]
 
Suppose $(E,\alpha)$ is a semistable pair, then $E$ can be viewed as a quotient 
\[q:V\otimes \oo_{X}(-m)\twoheadrightarrow E.\]
Another datum of the pair is the morphism $\alpha$. It gives rise to a linear map 
\[\sigma: H^{0}(E_{0}(m))\to H^{0}(E(m))\cong V.\]
Thus, a semistable pair gives rise to the following diagram 
 \[\begin{tikzcd}K_{0} \arrow[hookrightarrow]{r}{\iota} & H^{0}(E_{0}(m))\otimes\oo_{X}(-m)\arrow[two heads]{r}{{\rm ev}}\arrow{d}{\sigma}& E_{0}\arrow{d}{\alpha}\\
 \, & V\otimes\oo_{X}(-m)\arrow[two heads]{r}{q}& E\end{tikzcd}.\]
 Here, $\iota$ is the kernel of the evaluation map ${\rm ev}$.
Conversely, we can obtain a pair from a quotient $q$ and a linear map $\sigma$ as long as 
\[q\circ \sigma\circ \iota=0.\] 
  Also notice that, $\sigma=0$ if and only if $\alpha=0$.

We will study in the following spaces:
\begin{eqnarray*}
\p= \p(\Hom(H^{0}(E_{0}(m)),V))={\rm Proj}(H^{0}(E_{0}(m))\otimes V^{\vee})\\
\mbox{and}\quad Q=\quot_{X}^{P}(V\otimes\oo_{X}(-m)).\quad\quad\quad\quad\quad
\end{eqnarray*}
This is motivated by a similar construction in~\cite{MR1299005,MR1316305}.
Spaces $\p$ and $Q$ are fine moduli spaces, with universal families:
\begin{eqnarray}
\label{universal-family-p}H^{0}(E_{0}(m))\otimes \oo_{\p}\to V\otimes \oo_{\p}(1)\\
\label{universal-family-q}\mbox{and}\quad V\otimes\oo_{X}(-m)\to \mathscr E.
\end{eqnarray}
Let 
\[Z\subset \p\times Q\]
be the locally closed subscheme of points $\xi=([\sigma],[q])$ such that 
\begin{enumerate}[(i)]
\item $q\circ \sigma\circ \iota=0$;
\item $E$ is pure;
\item the quotient $q$ induces an isomorphism of vector spaces 
\[V\stackrel{\sim}{\to} H^{0}(E(m)).\]
\end{enumerate}

There is a natural $\SL(V)$-action on $\p\times Q$, given as follows,
\[([\sigma],[q]).g=([g^{-1}\circ\sigma],[q\circ g])\]
for $g\in\SL(V)$ and $([\sigma],[q])\in \p\times Q$. It can be easily checked that this indeed defines a right action. It is clear that $Z$ is invariant under this action. The closure $\bar Z$ of $Z\subset \p\times Q$ is invariant as well.

We are going to construct the moduli space by taking the GIT quotient of $\bar Z$, eliminating the extra information coming from identifying $V$ and $H^{0}(E(m))$. 
A key step is to relate the $\delta$-stability condition to a GIT-stability condition, which will occupy a large part of this section.
The central theorem we will need is the Hilbert-Mumford Criterion, which we shall recall.

Let $G$ be an algebraic group. A {\it 1-parameter subgroup} (1-PS) of $G$ is a nontrivial homomorphism
\[\lambda: \mathbb G_{m}\to G.\]
Let $X$ be a proper algebraic scheme with a $G$-action, and $L$ be a $G$-linearized line bundle. Let $x\in X$ be a closed points, and denote the orbit map as
\[\psi_{x}:G\times \{x\}\to X.\]
Let $\lambda$ be a 1-PS of $G$. Then $\psi_{x}\circ \lambda$ 
extends to a morphism
\[f:\A^{1}\to X.\]
The point $f(0)$ is fixed under the action of $\mathbb G_{m}$. Thus, there is an induced action of $\mathbb G_{m}$ on the fiber of $L$ at $f(0)$, which is nothing but a character of $\mathbb G_{m}$:
\[\mathbb G_{m}\to \mathbb G_{m}, \quad z\mapsto z^{r}. \]
Then we define 
\[\mu^{L}(x,\lambda)=-r.\]
Although we use the terminologies invertible sheaves and line bundles interchangeably, we do mean the line bundle associated to the invertible sheaf in defining $\mu^{L}(x,\lambda)$.

GIT semistability can be determined by numbers $\mu^{L}(x,\lambda)$ via the following practical criterion:
\begin{customthm}{HM}[Hilbert-Mumford Criterion]\label{hilbert-mumford} Let a reductive group $G$ act on a proper scheme $X$. Let $L$ be an ample $G$-linearized line bundle, and $x\in X$ a closed point. Then
\begin{eqnarray*}
x\mbox{ is semistable w.r.t. }L&\iff& \mu^{L}(x,\lambda)\geq 0,\,\,\forall\mbox{1-PS }\lambda, \\
x\mbox{ is stable w.r.t. }L&\iff& \mu^{L}(x,\lambda)> 0,\,\,\forall\mbox{1-PS }\lambda.
\end{eqnarray*}

\end{customthm}

We next define a $\SL(V)$-linearized ample line bundle on $\p\times Q$, with respect to which, the GIT-stability condition will agree with the $\delta$-stability condition.

For an $l\gg 0$, we obtain an embedding, which is $\SL(V)$-equivariant,
\begin{eqnarray*}
Q=\quot_{X}^{P}(V\otimes\oo_{X}(-m))&\hookrightarrow& \grass(V\otimes H^{0}(\oo_{X}(l-m)),P(l)),\\
\lbrack q:V\otimes\oo_{X}(-m)\twoheadrightarrow E\rbrack&\mapsto& [H^{0}(q(l)):V\otimes H^{0}(\oo_{X}(l-m))\twoheadrightarrow H^{0}(E(l))].\end{eqnarray*}
A priori, we may just let $l=m$. But in order to make the calculation easier, we want $l$ to be large. The reason will be clear later.

The standard very ample line bundle on the Grassmannian is $\SL(V)$-linearized. 
Let $\oo_{Q}(1)$ be its pullback to $Q$. The line bundle $\oo_{\p}(1)$ is also $\SL(V)$-linearized. For positive integers $n_{1}$ and $n_{2}$, consider the $\SL(V)$-linearized line bundle 
\[L=\oo_{\p}(n_{1})\boxtimes\oo_{Q}(n_{2}).\]
Let $\lambda:\mathbb C^{*}\to \SL(V)$ be a 1-PS of $\SL(V)$. For a point 
\[\xi=([\sigma],[q])\in \p\times Q,\]
 we will compute $\mu^{L}(\xi,\lambda)$. Recall that
 \[\mu^{L_{1}\otimes L_{2}}=\mu^{L_{1}}+\mu^{L_{2}}.\]
 So, we can calculate $\mu^{\oo_{\p}(n_{1})}(\xi,\lambda)$ and $\mu^{\oo_{Q}(n_{2})}(\xi,\lambda)$ separately.
 
 From the $\mathbb C^{*}$-action on $V$, we have a weight decomposition of $V$ as 
 \[V=\bigoplus_{1\leq i\leq s}V_{i}\]
  where $V_{i}$ is the isotypic component of weight $\gamma_{i}\in \Z$. Arrange $\gamma_{i}$'s such that $\gamma_{1}< \gamma_{2}<\cdots<\gamma_{s}$. We get a filtration
\[W_{1}\subsetneqq W_{2}\subsetneqq\cdots\subsetneqq W_{s}=V,\]
where $W_{j}=\oplus_{i=1}^{j}V_{i}$.

Let $\{e^{i}_{u}\}_{u}$
 be a basis of $V_{i}$. Then $\sigma:H^{0}(E_{0}(m))\to V$, considered as an element in $H^{0}(E_{0}(m))^{\vee}\otimes V$, can be written as 
 \[\sigma=\bigoplus_{i,u}f_{u}^{i}\otimes e_{u}^{i}\] 
 where 
 $0\not=f_{u}^{i}\in H^{0}(E_{0}(m))^{\vee}.$
  Denote the largest $i$ appearing in the direct sum by $i(\sigma)$ and define 
 $\gamma(\sigma)=\gamma_{i(\sigma)}$. Then, the contribution to $\mu^{L}(x,\lambda)$ from $\oo_{\p}(n_{1})$ is 
 \[n_{1}\gamma(\sigma).\]

The filtration on $V$ induces a filtration on $E$
\[F_{1}\subsetneqq F_{2}\subsetneqq\cdots\subsetneqq F_{s}=E.\]
Here $F_{j}= q(W_{j}\otimes \oo_{X}(-m))$.
 %We can twist the filtration by $\oo(l)$ to get a filtration of $E(l)$. 
 Let $\gr_{j}=F_{j}/F_{j-1}$.
 The family of subsheaves $F\subset E$ of the form $q(W\otimes\oo_{X}(-m))$ for some subspace $W\leq V$, is bounded. So is the family of factors. Thus, $\exists l_{0}\in \N$, such that for all such $F_{j}$'s and factors,
 \begin{equation}\label{choice-l-plucker-embedding}H^{i}(F_{j}(l))=0 \mbox{ and }H^{i}(\gr_{j}(l))=0,\quad \forall l>l_{0},\, i>0.\end{equation}
 %We will fix such an $l$. 
 Twist the filtration of $E$ by $\oo_{X}(l)$, we get a filtration of $E(l)$, whose factors are $\gr_{i}(l)$. 
 %When $z\in\mathbb G_{m}$ tends to $0$, 
 
 View $q$ as a point in the Grassmannian, the limit
 \[\lim_{z\to 0}q.\lambda(z)= \bigoplus_{i=1}^{s}H^{0}(\gr_{i}(l)).\]
For a proof of this claim, see \cite[Lemma 4.4.3]{huybrechts-lehn-moduli-of-sheaves}. The line bundle associated to the invertible sheaf $\oo_{Q}(1)$ at this limit has fiber 
 \[\bigwedge ^{P(l)}\bigoplus_{i=1}^{s}H^{0}(\gr_{i}(l)).\]
 This has weight
 \[\sum_{i=1}^{s}\gamma_{i}h^{0}(\gr_{i}(l)).\]
 Thus, the contribution to $\mu^{L}(\xi,\lambda)$ from $\oo_{Q}(n_{2})$ is
\begin{equation*}
-n_{2}\sum_{i}\gamma_{i}h^{0}(\gr_{i}(l))=-n_{2}\sum_{i=1}^{s}\gamma_{i}(P_{F_{i}}(l)-P_{F_{i-1}}(l))
\end{equation*}
 We would like to point out that, when calculating the contribution from $\oo_{\p}(n_{1})$, we look at sub-line bundles. But this time, we study quotient bundles, which is more convenient here. 

Thus, we have
\begin{lem}\label{mu}With notations as above, then
\[\mu^{L}(\xi,\lambda)=n_{1}\gamma(\sigma)-n_{2}\sum_{i=1}^{s}\gamma_{i}(P_{F_{i}}(l)-P_{F_{i-1}}(l)).\]\end{lem}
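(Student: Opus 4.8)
The plan is to use the additivity of the Mumford weight, $\mu^{L_1 \otimes L_2}(\xi,\lambda) = \mu^{L_1}(\xi,\lambda) + \mu^{L_2}(\xi,\lambda)$, recalled above, in order to split the computation of $\mu^L(\xi,\lambda)$ into the two separate contributions coming from $\oo_\p(n_1)$ and $\oo_Q(n_2)$. In each case the recipe is the same: extend the orbit map $z \mapsto \xi.\lambda(z)$ across $z=0$ by identifying the limiting fixed point $f(0)$, and then read off the weight $r$ of the residual $\mathbb{G}_m$-action on the fiber of the line bundle at $f(0)$, so that the contribution to $\mu^L$ is $-r$ by definition. Both limits are taken with respect to the same weight data, namely the decomposition $V = \bigoplus_{i=1}^s V_i$ into isotypic components of weight $\gamma_i$ and the induced filtrations $W_1 \subsetneq \cdots \subsetneq W_s = V$ on $V$ and $F_1 \subsetneq \cdots \subsetneq F_s = E$ on $E$.

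For the $\oo_\p(n_1)$-factor I would write $\sigma = \bigoplus_{i,u} f_u^i \otimes e_u^i$ and track the $\mathbb{G}_m$-action through the identification $\p = \p(\Hom(H^0(E_0(m)),V))$. Since $\mathbb{G}_m$ acts on $V_i$ with weight $\gamma_i$, factoring out the extremal power of $z$ shows that the projective limit $f(0)$ is the class of the top-weight part $\sum_u f_u^{i(\sigma)} \otimes e_u^{i(\sigma)}$, and that the induced character on the fiber of $\oo_\p(1)$ there is governed by $\gamma(\sigma) = \gamma_{i(\sigma)}$. Collecting the constant $n_1$, this factor contributes $n_1 \gamma(\sigma)$.

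For the $\oo_Q(n_2)$-factor I would pass to the $\SL(V)$-equivariant Pl\"ucker embedding $Q \hookrightarrow \grass(V \otimes H^0(\oo_X(l-m)), P(l))$, under which $\oo_Q(1)$ is pulled back from the Pl\"ucker line bundle. The weight filtration on $V$ pushes forward under $q$ to $F_j = q(W_j \otimes \oo_X(-m))$, and by \cite[Lemma 4.4.3]{huybrechts-lehn-moduli-of-sheaves} the limit $\lim_{z\to 0} q.\lambda(z)$ is the associated graded $\bigoplus_{i} H^0(\gr_i(l))$. The Pl\"ucker fiber there is $\bigwedge^{P(l)} \bigoplus_i H^0(\gr_i(l))$, on which $\mathbb{G}_m$ acts with weight $\sum_i \gamma_i h^0(\gr_i(l))$; the cohomology vanishing (\ref{choice-l-plucker-embedding}) secured by the choice of $l$ then lets me replace $h^0(\gr_i(l))$ by $P_{F_i}(l) - P_{F_{i-1}}(l)$. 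Hence this factor contributes $-n_2 \sum_{i=1}^s \gamma_i (P_{F_i}(l) - P_{F_{i-1}}(l))$, and summing the two contributions yields the asserted formula.

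I expect the main obstacle to be the sign bookkeeping rather than any conceptual difficulty: one must track the inverse appearing in the right action $\xi.g$, the direction in which the projective and Grassmannian limits select the extremal weight, and the overall minus sign in $\mu^L = -r$, all consistently across the two factors. A secondary point is to justify that the limit on the Grassmannian side genuinely lands on the associated-graded configuration rather than on some degenerate quotient, which is precisely the content of the cited lemma; granting that, the numerical identity $h^0(\gr_i(l)) = P_{F_i}(l) - P_{F_{i-1}}(l)$ is immediate from the vanishing that defines the admissible range of $l$.
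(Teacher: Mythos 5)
Your proposal is correct and follows essentially the same route as the paper: the paper's own argument (laid out in the discussion immediately preceding the lemma) likewise splits $\mu^{L}$ by additivity, computes the $\oo_{\p}(n_{1})$-weight from the top isotypic component of $\sigma$, and computes the $\oo_{Q}(n_{2})$-weight via the Grassmannian embedding, the limit $\lim_{z\to0}q.\lambda(z)=\bigoplus_{i}H^{0}(\gr_{i}(l))$ from \cite[Lemma 4.4.3]{huybrechts-lehn-moduli-of-sheaves}, and the vanishing (\ref{choice-l-plucker-embedding}) to identify $h^{0}(\gr_{i}(l))$ with $P_{F_{i}}(l)-P_{F_{i-1}}(l)$. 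The sign bookkeeping you flag (sub-line bundle on the $\p$ side versus quotient description on the $Q$ side, plus the inverse in the right action) is exactly the point the paper also remarks on, and you handle it consistently.
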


\begin{lem}\label{ReIntGIT}For $l$ large as in (\ref{choice-l-plucker-embedding}), let 
\[\xi=([\sigma],[q])\in \bar Z,\] 
be a point with associated morphism $\alpha:E_{0}\to E$.
Then the following two conditions are equivalent:
\begin{enumerate}[(i)]
\item $\xi$ is GIT-stable with respect to $ L$;
\item For any nontrivial proper subspace $W< V$, let  
\[G=q(W\otimes \oo_{X}(-m)).\]
Then
\begin{equation}\label{WG}P_{G}(l)>\frac{n_{1}}{n_{2}}\Big(\epsilon_{W}(\sigma)-\frac{\dim W}{\dim V}\Big)+P(l)\frac{\dim W}{\dim V}.\end{equation}
Here, $\epsilon_{W}(\sigma)$ is either $1$ of $0$ depending on whether $W$ contains $\im \sigma$ or not.
\end{enumerate}\end{lem}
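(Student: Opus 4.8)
The plan is to apply the Hilbert--Mumford criterion (Theorem~\ref{hilbert-mumford}) together with the explicit formula for $\mu^{L}(\xi,\lambda)$ from Lemma~\ref{mu}. By the criterion, $\xi$ is GIT-stable with respect to $L$ if and only if $\mu^{L}(\xi,\lambda)>0$ for every $1$-PS $\lambda$ of $\SL(V)$. So I would first show that condition (ii) is \emph{necessary}: given a nontrivial proper subspace $W<V$, I construct a specific $1$-PS $\lambda_{W}$ adapted to the two-step filtration $W\subsetneqq V$, namely the one acting with weights $(\dim V-\dim W)$ on $W$ and $(-\dim W)$ on a complement (so that the weights sum to zero, keeping $\lambda_{W}$ inside $\SL(V)$). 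Feeding $\lambda_{W}$ into Lemma~\ref{mu}, the term $n_{1}\gamma(\sigma)$ becomes a multiple of $\epsilon_{W}(\sigma)$ (according to whether $\im\sigma\subset W$), and the Grassmannian term $-n_{2}\sum\gamma_{i}(P_{F_{i}}(l)-P_{F_{i-1}}(l))$ collapses, since there are only two weights, into an expression involving $P_{G}(l)$ and $P(l)$, where $G=q(W\otimes\oo_{X}(-m))$. Rearranging $\mu^{L}(\xi,\lambda_{W})>0$ then yields precisely the inequality~(\ref{WG}).

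\textbf{Sufficiency via reduction to special $1$-PS.}
For the converse, the key structural input is that an arbitrary $1$-PS contributes through the filtration $W_{1}\subsetneqq\cdots\subsetneqq W_{s}=V$ it induces on $V$, together with its weights $\gamma_{1}<\cdots<\gamma_{s}$. I would use the standard Abel-summation rewriting of the formula in Lemma~\ref{mu}: writing $\mu^{L}(\xi,\lambda)$ as a sum over the consecutive steps $W_{j}$, one expresses it as a nonnegative combination (with coefficients $\gamma_{j+1}-\gamma_{j}>0$ and the top weight) of the elementary quantities attached to each $W_{j}$. Concretely, for each step one gets a term of the shape $n_{1}\epsilon_{W_{j}}(\sigma)-n_{1}\tfrac{\dim W_{j}}{\dim V}-n_{2}\big(P_{F_{j}}(l)-P(l)\tfrac{\dim W_{j}}{\dim V}\big)$, so that if inequality~(\ref{WG}) holds for every proper $W$, each such term is negative and the whole positive combination makes $\mu^{L}(\xi,\lambda)>0$. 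This reduces stability against all $1$-PS to stability against the two-step $1$-PS already used in the necessity direction.

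\textbf{Matching $P_{F_{j}}$ with $P_{G}$, and the role of $\bar Z$.}
The point requiring care is the identification of $P_{G}(l)$, where $G=q(W\otimes\oo_{X}(-m))$, with the Hilbert polynomial $P_{F_{j}}$ appearing in Lemma~\ref{mu}, and the handling of the $\epsilon$-term. Here is where the choice of $l$ large as in~(\ref{choice-l-plucker-embedding}) enters: for such $l$ the bounded family of subsheaves of the form $q(W\otimes\oo_{X}(-m))$ has $H^{i}(F_{j}(l))=0$ for $i>0$, so $h^{0}(F_{j}(l))=P_{F_{j}}(l)=P_{G}(l)$ and the Grassmannian weights faithfully record Hilbert polynomials rather than mere dimensions of section spaces. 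The $\gamma(\sigma)$ term must be matched with $\epsilon_{W}(\sigma)$: tracing the definition of $\gamma(\sigma)=\gamma_{i(\sigma)}$ through the two-weight $1$-PS shows that $\gamma(\sigma)$ detects exactly whether $\im\sigma$ lies in $W$, which is the meaning of $\epsilon_{W}(\sigma)$.

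\textbf{The main obstacle.}
I expect the principal difficulty to be the sufficiency direction, specifically justifying that it is enough to test the two-step $1$-PS. The subtlety is that $\xi$ lies in the closure $\bar Z$ rather than in $Z$ itself, so the subsheaf $G=q(W\otimes\oo_{X}(-m))$ need not a priori have the good cohomological or purity properties that hold on $Z$; one must ensure that the polynomial inequality~(\ref{WG}) is genuinely the right book-keeping even at boundary points where $V\to H^{0}(E(m))$ may fail to be an isomorphism. The Abel-summation step itself is a routine though slightly delicate manipulation, but confirming that the per-step quantities assemble correctly—and that strictness of the inequality is preserved rather than degenerating to equality at some step—is the crux of the argument.
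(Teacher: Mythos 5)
Your proposal follows essentially the same route as the paper's proof: necessity by feeding the two-step $1$-PS adapted to $W\subsetneqq V$ into Lemma~\ref{mu}, and sufficiency by the Abel-summation rewriting of $\mu^{L}(\xi,\lambda)$ for an arbitrary $1$-PS as a strictly positive combination (coefficients $\gamma_{i+1}-\gamma_{i}>0$) of the per-step quantities that (\ref{WG}) makes strictly positive, exactly as in the paper. One small correction: with your weight normalization ($\dim V-\dim W$ on $W$, $-\dim W$ on a complement), $W$ sits at the \emph{top} of the induced filtration, so rearranging $\mu^{L}(\xi,\lambda_{W})>0$ yields (\ref{WG}) for the complementary subspace rather than for $W$ itself; the paper instead uses weights $\dim W-\dim V$ on $W$ and $\dim W$ on the complement, though the discrepancy is immaterial since condition (ii) quantifies over all nontrivial proper subspaces.
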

\begin{proof}Suppose $\xi$ is GIT-stable. Let $W$ and $G$ be as in the statement. %\footnote{The family of such subsheaves is bounded, so there are only finitely many polynomials in the picture.} 
Let 
\[k=\dim W.\] Consider the one parameter subgroup given as follows
\begin{eqnarray*}
\lambda(t)=\left( \begin{array}{cc}
  t^{k-P(m)}\Id_{k}&  \\
 &  t^{k}\Id_{P(m)-k}
\end{array} \right),
\end{eqnarray*}where $t$ acts on $W$ be by multiplying $t^{k-P(m)}$ and its complementary space by multiplying $t^{k}$.
If $\im \sigma\subset W$, then by Theorem~\ref{hilbert-mumford} and Lemma~\ref{mu}, \begin{eqnarray*}
0<\mu^{L}(\xi,\lambda)=n_{1}\big(k-P(m)\big)-n_{2}kP(l)+n_{2}P(m)P_{G}(l).
\end{eqnarray*}
Recall that $\dim V=P(m)$. Thus, in this case,
\[P_{G}(l)>\frac{n_{1}}{n_{2}}\Big(1-\frac{\dim W}{\dim V}\Big)+P(l)\frac{\dim W}{\dim V}.\]
If $\im \sigma \nsubset W$, %$\im\alpha\nsubset G$, 
then 
\[0<\mu^{L} (\xi,\lambda)=n_{1}k-n_{2}kP(l)+n_{2}P(m)P_{G}(l).\] In this case,
\[P_{G}(l)>-\frac{n_{1}}{n_{2}}\cdot\frac{\dim W}{\dim V}+P(l)\frac{\dim W}{\dim V}.\]
We thus have proven one direction of the statement.

Suppose we have the inequality~(\ref{WG}). We use the notations in the discussion right before Lemma~\ref{mu}. 
The inequality, combined with Lemma~\ref{mu}, implies
\begin{eqnarray*}
\mu^{L}(\xi,\lambda)
> n_{1}\gamma_{s}-n_{2}\gamma_{s}P(l)+\Big(-\frac{n_{1}}{P(m)}+\frac{n_{2}P(l)}{P(m)}\Big)\sum_{i=1}^{s-1}(\gamma_{i+1}-\gamma_{i})\dim W_{i}
\end{eqnarray*}
Moreover,
\begin{eqnarray*}
\sum_{i=1}^{s-1}(\gamma_{i+1}-\gamma_{i})\dim W_{i}=\gamma_{s}P(m).
\end{eqnarray*}
Thus, $\mu^{L}(\xi,\lambda)> 0$. Therefore, $\xi$ is GIT-stable.
\end{proof}
GIT-semistability can also be characterized by the corresponding weak inequality.

Now, let 
\begin{equation}\label{linearized-line-bundle}\frac{n_{1}}{n_{2}}=\frac{P(l)}{2r}.\end{equation}

We fix an $l$ such that 
\begin{enumerate}[(i)]
\item (\ref{choice-l-plucker-embedding}) holds;
\item (\ref{WG}) holds if and only if it holds as an inequality of polynomials in $l$:
\begin{equation}\label{WG-polynomial}
P_{G}>\frac{n_{1}}{n_{2}}\Big(\epsilon_{W}(\sigma)-\frac{\dim W}{\dim V}\Big)+P\frac{\dim W}{\dim V}.\end{equation}
\end{enumerate}
We can ask for the last condition because the family of such $G$'s is bounded.

\begin{cor}\label{InjGlobal} 
If $([\sigma],[q])\in \bar Z$ is GIT-semistable, then 
\[H^0(q(m)):V\to H^{0}(E(m))\]
is injective and for any coherent subsheaf $G\subset E$ such that $\dim G\leq d-1$, $ H^{0}(G(m))=0$.
\end{cor}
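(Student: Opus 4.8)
The plan is to derive both claims of Corollary~\ref{InjGlobal} from the GIT-semistability inequality~(\ref{WG-polynomial}) by feeding in carefully chosen subspaces $W < V$ and reading off what semistability forces about the induced subsheaves $G = q(W\otimes\oo_X(-m))$. The governing inequality, in its weak (semistable) form and after substituting $n_1/n_2 = P(l)/2r$ from~(\ref{linearized-line-bundle}), reads
\[
P_G \ \geq\ \frac{P(l)}{2r}\Big(\epsilon_W(\sigma)-\frac{\dim W}{\dim V}\Big)+P\,\frac{\dim W}{\dim V},
\]
an inequality of polynomials in $l$. The two assertions correspond to two opposite extreme regimes: injectivity of $H^0(q(m))$ is about subspaces $W$ whose image under $q$ is \emph{zero} (these must be trivial), while the vanishing $H^0(G(m))=0$ for low-dimensional $G$ is about subsheaves of small support.

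First I would prove injectivity of $H^0(q(m)):V\to H^0(E(m))$. Suppose $W=\ker\big(H^0(q(m))\big)\neq 0$; I want a contradiction. Because $\xi\in\bar Z$, condition~(iii) defining $Z$ (and its specialization to $\bar Z$) controls the relationship between $V$ and $H^0(E(m))$, so a nonzero $W$ in the kernel should force $G=q(W\otimes\oo_X(-m))$ to be ``too small.'' Concretely, for $W$ in the kernel of $H^0(q(m))$ one expects $H^0(G(m))=0$, hence (for $m$ large, using the boundedness and vanishing established in Proposition~\ref{Bdd} and the conditions imposed in choosing $N$) that $G$ has $P_G$ with $\dim G \le d-1$, so $P_G$ has degree $<d$ while the right-hand side has degree $d$ with positive leading coefficient $\tfrac{\dim W}{\dim V}\,r > 0$. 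Comparing leading terms in $l$ then contradicts the semistability inequality. I would handle the two cases $\epsilon_W(\sigma)=0,1$ uniformly, since only the degree-$d$ term on the right matters for the leading-coefficient comparison.

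Second, for the vanishing statement, let $G\subset E$ be any coherent subsheaf with $\dim G\le d-1$, and set $W = H^0(q(m))\big(H^0(G(m))\big)\subset V$ via the already-established injectivity, so that $\dim W = h^0(G(m))$ and $q(W\otimes\oo_X(-m))\subset G$, whence $P_{q(W\otimes\oo_X(-m))}$ also has degree $<d$. If $W\neq 0$, the same degree comparison in the semistability inequality~(\ref{WG-polynomial}) yields a contradiction: the left side has degree $\le d-1$, the right side has a strictly positive degree-$d$ coefficient. Hence $W=0$, and by injectivity of $H^0(q(m))$ this forces $h^0(G(m))=0$, i.e. $H^0(G(m))=0$.

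\emph{The main obstacle} will be verifying carefully that the relevant subsheaf $G=q(W\otimes\oo_X(-m))$ genuinely has support of dimension $\le d-1$ (equivalently $\deg P_G < d$) in the injectivity argument, since $G$ is defined via the quotient $q$ at a possibly non-reduced limit point of $\bar Z$, where the clean picture from $Z$ (the isomorphism $V\xrightarrow{\sim} H^0(E(m))$) may degenerate. The key technical point is that semistability itself, applied to such $W$, pins down the multiplicity $r(G)$: if $G$ had full multiplicity $r$ the contradiction disappears, so one must use that $W$ sits in the kernel of $H^0(q(m))$ to rule this out. I would lean on the boundedness of the family of all such $G$ (noted just before the Corollary) to ensure the cohomology-vanishing and the equivalence~(\ref{WG-polynomial}) hold uniformly for the chosen~$m$ and~$l$, so that the leading-coefficient comparison is legitimate across every point of $\bar Z$.
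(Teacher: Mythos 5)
Your overall strategy---feeding chosen subspaces $W<V$ into the semistability inequality~(\ref{WG-polynomial}) and comparing degrees in $l$---is the same as the paper's, but your injectivity argument has a genuine gap at its crux. You need the left-hand side $P_G$, where $G=q(W\otimes\oo_X(-m))$ and $W=\ker H^0(q(m))$, to have degree $<d$, and your route to this (``one expects $H^0(G(m))=0$, hence $\dim G\le d-1$'') does not work. The vanishing and boundedness results you invoke do not apply here: Proposition~\ref{Bdd} and the conditions fixing $m$ concern sheaves underlying $\delta$-semistable pairs and their Harder-Narasimhan factors, whereas $E$ and $G$ live at an arbitrary point of $\bar Z$, which may be a non-pure, non-semistable limit; for the same reason condition (iii) defining $Z$ does not pass to $\bar Z$, so your opening appeal to it is a non-starter (the Corollary exists precisely because that condition degenerates on the boundary). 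The uniform cohomological control coming from boundedness of the family of all $G=q(W\otimes\oo_X(-m))$ is arranged for the twist by $l$---that is exactly how $l$ is chosen in~(\ref{choice-l-plucker-embedding})---not for the twist by $m$, which was fixed earlier; and in any case $H^0(G(m))=0$ by itself never implies $\dim G\le d-1$. The observation you are missing, which is the paper's one-line proof, is simpler and stronger: if $W\subset\ker H^0(q(m))$, then the twisted map $W\otimes\oo_X\to E(m)$ kills every global section of $W\otimes\oo_X$, which is globally generated, so the map is zero and hence $G=0$ identically. Then $P_G$ is the zero polynomial while the right-hand side of~(\ref{WG-polynomial}) is a positive polynomial whenever $\dim W>0$ (in both cases $\epsilon_W(\sigma)=0,1$), and the contradiction is immediate. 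Your ``main obstacle'' paragraph correctly senses that controlling $r(G)$ is the crux, but the fix you propose does not close it.

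In the second claim there is a further slip: $H^0(q(m))$ maps $V\to H^0(E(m))$, so ``$W=H^0(q(m))\big(H^0(G(m))\big)$'' is not meaningful; you intend the preimage $W=\big(H^0(q(m))\big)^{-1}\big(H^0(G(m))\big)$. Injectivity then gives $\dim W=\dim\big(V\cap H^0(G(m))\big)$, not $\dim W=h^0(G(m))$: that equality would require $H^0(G(m))$ to lie in the image of $V$, i.e.\ surjectivity of $H^0(q(m))$, which is exactly what can fail on $\bar Z\setminus Z$. Consequently your degree comparison (which is the paper's argument, and is correct: the left side has degree $\le d-1$ since $q(W\otimes\oo_X(-m))\subset G$, the right side has degree $d$ with positive leading coefficient) yields only $V\cap H^0(G(m))=0$, and your final step ``by injectivity this forces $h^0(G(m))=0$'' is a non sequitur. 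To be fair, the paper's own proof writes $W=H^0(G(m))$ and makes the same identification silently; what that argument literally establishes is the intersection statement, which is what is really needed downstream in Proposition~\ref{gitss-semistable}. But you should be aware that, as written, neither your bookkeeping nor that inference is justified at points of $\bar Z$ where $V\to H^0(E(m))$ is not surjective.
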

In defining $Z$, we require the quotient to be pure. When we take the closure, we may include non-pure quotients. But this lemma imposes restrictions.
\begin{proof}Let $W$ be the kernel of $H^0(q(m)):V\to H^0(E(m))$, then the image 
\[G=q( W\otimes \oo_X(-m))=0.\]
 The inequality (\ref{WG-polynomial}) forces $\dim W$ to be zero, otherwise the right-hand side of the inequality is a positive polynomial while the left-hand side is $0$.

Suppose $G\subset E$ such that $\dim G\leq d-1$. Let $W=H^{0}(G(m))$, then $q(W\otimes\oo_X(-m))\subset G$.
 By the inequality (\ref{WG-polynomial}), we have $\dim W=0$, otherwise the right-hand side will be a positive polynomial of degree no less than $d$, while the left hand side is of degree $\leq d-1$.\end{proof}

We are ready to relate the $\delta$-stability condition to the GIT-stability condition.
\begin{prop}\label{gitss-semistable}Let $([\sigma],[q])$ and $(E,\alpha)$ be as in the previous lemma. 
The following two assertions are equivalent
\begin{enumerate}[(i)]
\item $([\sigma],[q])$ is GIT-(semi)stable with respect to $L$;
\item $(E,\alpha)$ is (semi)stable and $q$ induces an isomorphism $V\stackrel{\sim}{\to} H^0(E(m))$.
\end{enumerate}\end{prop}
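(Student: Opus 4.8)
The plan is to run the standard Gieseker--Simpson GIT argument adapted to pairs, using Lemma~\ref{ReIntGIT} to replace GIT-(semi)stability of $\xi=([\sigma],[q])$ by the polynomial inequality~(\ref{WG-polynomial}) for all proper $0\neq W\subsetneq V$, and then matching that inequality with the reformulations of $\delta$-(semi)stability in Lemmas~\ref{ReIntSS} and~\ref{EquiSS2}. Throughout I use the dictionary between a subspace $W$ and the subsheaf $G=q(W\otimes\oo_X(-m))\subset E$; the induced sub-pair $(G,\alpha^{\prime})$ satisfies $\epsilon(\alpha^{\prime})=\epsilon_W(\sigma)$, since $\im\sigma\subset W$ forces $\alpha=q\circ\sigma\circ\mathrm{ev}$ to factor through $G$. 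Recall the normalization $\tfrac{n_1}{n_2}=\tfrac{P(l)}{2r}$ from~(\ref{linearized-line-bundle}) and $\dim V=P(m)$, so that~(\ref{WG-polynomial}) reads
\[
P_G>\frac{P(l)}{2r}\Big(\epsilon_W(\sigma)-\frac{\dim W}{P(m)}\Big)+P\,\frac{\dim W}{P(m)}.
\]

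For the implication (ii)$\Rightarrow$(i), I would first reduce to \emph{saturated} subspaces. With $G$ fixed, the right-hand side above is increasing in $\dim W$, because the coefficient of $\dim W$ is $\tfrac{1}{P(m)}\bigl(P-\tfrac{P(l)}{2r}\bigr)$, a polynomial of degree $d\geq 1$ with positive leading coefficient; hence it suffices to verify the inequality for the largest admissible $W$, namely $W=H^0(G(m))$, which generates the same $G$ once $G(m)$ is globally generated. Since these $G$ range over a bounded family, the choice of $m$ gives $\dim W=h^0(G(m))=P_G(m)$. Now $\delta$-stability of $(E,\alpha)$ supplies, via Lemma~\ref{ReIntSS} (equivalently Lemma~\ref{EquiSS2}), the inequality $\tfrac{P_G}{2r(G)-\epsilon(\alpha^{\prime})}<\tfrac{P}{2r-\epsilon(\alpha)}$, and a direct computation---using $\dim W=P_G(m)$, $\dim V=P(m)$ and the normalization of $\tfrac{n_1}{n_2}$---converts this into~(\ref{WG-polynomial}). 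The degenerate case $G=E$ with $W$ proper is immediate, since then the left-hand side is $P$ while the right-hand side has strictly smaller leading coefficient.

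For (i)$\Rightarrow$(ii), I start from Corollary~\ref{InjGlobal}: GIT-semistability forces $H^0(q(m))\colon V\to H^0(E(m))$ to be injective and annihilates the sections of every subsheaf of dimension $\leq d-1$. Purity of $E$ follows, for a nonzero torsion subsheaf would acquire sections after twisting by $m$ (the $E$'s form a bounded family, so a single $m$ works), contradicting Corollary~\ref{InjGlobal}. Reversing the computation of the previous paragraph, the inequality~(\ref{WG-polynomial}) applied to the saturated $W$ attached to an arbitrary proper sub-pair yields the defining inequality of Lemma~\ref{EquiSS2}(ii), whence $(E,\alpha)$ is $\delta$-stable. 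Finally, stability places $E$ in the bounded family of Proposition~\ref{Bdd}; by the choice of $m$, $E(m)$ has no higher cohomology, so $h^0(E(m))=P(m)=\dim V$, and the injection $V\hookrightarrow H^0(E(m))$ is upgraded to the isomorphism required in (ii).

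The main obstacle is the numerical bookkeeping in the two conversions between~(\ref{WG-polynomial}) and the multiplicity-based stability inequality. The GIT inequality is phrased through the \emph{subspace dimension} $\dim W$ and the constant $\tfrac{P(l)}{2r}$, whereas $\delta$-stability is phrased through the \emph{multiplicities} $r(G),r$ and the Hilbert polynomials $P_G,P$; bridging them requires (a) the saturation step together with boundedness to replace $\dim W$ by $h^0(G(m))=P_G(m)$, and (b) a comparison of leading and subleading coefficients, which is exactly where the largeness of $l$ secured in~(\ref{choice-l-plucker-embedding}) is used so that~(\ref{WG}) and~(\ref{WG-polynomial}) coincide. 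One must also track the $\epsilon$-terms that distinguish the pair setting from the plain-sheaf setting, and carry out the whole argument in parallel for the semistable (weak-inequality) variant, as permitted by the conventions recorded after Lemmas~\ref{ReIntGIT} and~\ref{EquiSS2}.
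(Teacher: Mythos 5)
Your skeleton (reduce GIT-(semi)stability to the inequality~(\ref{WG-polynomial}) via Lemma~\ref{ReIntGIT}, then match it against the stability criteria) is the same as the paper's, but both directions of your execution rest on a circular boundedness argument that the paper's proof is specifically structured to avoid. In (ii)$\Rightarrow$(i), the step ``the choice of $m$ gives $\dim W=h^0(G(m))=P_G(m)$'' is not available: the subsheaves $G=q(W\otimes\oo_X(-m))$ exist only after $m$ has been fixed (they live inside $\quot^P_X(V\otimes\oo_X(-m))$), so $m$ cannot be chosen in advance to kill their higher cohomology. This is exactly why the paper adapts the \emph{second} parameter $l$, not $m$, to this bounded family, and why Lemma~\ref{EquiSS2} is phrased in terms of $h^0(G(m))$ with no regularity hypothesis on $G$. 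Moreover, your intended conversion from the polynomial inequality of Lemma~\ref{ReIntSS} genuinely fails in the case $\epsilon(\alpha^{\prime})=0$: there its leading-coefficient content is $\tfrac{1}{2}\leq \tfrac{r}{2r-1}$, which is vacuous, and a lexicographic inequality of polynomials cannot be evaluated at the fixed value $m$. What closes the argument is the paper's route: apply Lemma~\ref{EquiSS2}(ii) directly, use only the free inequality $\dim W\leq h^0(G(m))$ (no saturation, no cohomology vanishing for $G$), and deduce the strict leading-coefficient inequality $r(G)>\tfrac{1}{2}\epsilon(\alpha^{\prime})-\tfrac{1}{2}\cdot\tfrac{\dim W}{\dim V}+r\tfrac{\dim W}{\dim V}\geq \tfrac{1}{2}\epsilon_W(\sigma)-\tfrac{1}{2}\cdot\tfrac{\dim W}{\dim V}+r\tfrac{\dim W}{\dim V}$, which gives~(\ref{WG-polynomial}).

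The (i)$\Rightarrow$(ii) direction has two more serious gaps. First, purity: ``a nonzero torsion subsheaf would acquire sections after twisting by $m$'' is the same circularity --- the torsion subsheaves of sheaves parametrized by $\bar Z$ form a family defined only once $m$ is fixed, and a positive-dimensional torsion subsheaf can well satisfy $H^0(T(m))=0$. Corollary~\ref{InjGlobal} \emph{restricts} torsion but does not exclude it (the paper says exactly this after the corollary). The paper instead uses that $\bar Z$ is the closure of the locus of pure quotients: by \cite[Proposition 4.4.2]{huybrechts-lehn-moduli-of-sheaves} there is $\phi:E\to F$ with $F$ pure, $P_F=P$, $\ker\phi=T_{d-1}(E)$, and purity of $E$ is obtained only at the very end, from $\phi$ being an isomorphism. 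Second, a circular dependency: to turn~(\ref{WG-polynomial}) into the subsheaf inequality of Lemma~\ref{EquiSS2}(ii) you must identify $\dim\bigl(H^0(G(m))\cap V\bigr)$ with $h^0(G(m))$, i.e.\ you need $V\to H^0(E(m))$ to be \emph{onto}; but you establish that isomorphism only at the last step, after stability. Corollary~\ref{InjGlobal} gives injectivity only, so for a sub-pair whose sections are not contained in the image of $V$ your argument produces no inequality at all. The paper breaks this circle by switching to the quotient criterion, Lemma~\ref{EquiSS2}(iii), applied to quotients $F''$ of the pure model $F$: there one needs only the lower bound $h^0(F''(m))\geq \dim V-\dim W$, which follows from injectivity alone; semistability of $(F,\phi\circ\alpha)$, the isomorphism $E\cong F$ (via global generation of $F(m)$), purity, and $V\cong H^0(E(m))$ then all come out together. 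As written, your proof does not go through; repaired along these lines it becomes the paper's proof.
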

Recall that when $\deg \delta\geq \deg P$, there are no strictly semistable pairs.
\begin{proof}
First, assume that a point $([\sigma],[q])\in \bar Z$ is GIT-semistable. Denote the quotient by $q:V\otimes\oo(-m)\to E$. Then by Corollary~\ref{InjGlobal},  we know that the induced linear map $V\to H^0(E(m))$ is injective. Since $([\sigma],[q])$ is in the closure of $Z$, $E$ can be deformed to a pure sheaf. By \cite[Proposition 4.4.2]{huybrechts-lehn-moduli-of-sheaves}, there is an exact sequence
\[0\to T_{d-1}(E)\to E\stackrel{\phi}{\to} F\]where $T_{d-1}(E)$ is the torsion of $E$ and such that $P_F=P_E=P$. According to Corollary~\ref{InjGlobal}, the exact sequence provides an injective linear map 
\[H^0(E(m))\hookrightarrow H^0(F(m)).\]

For any dimension $d$ quotient $\pi:F\twoheadrightarrow F''$, let $G$ be the kernel of $\pi\circ \phi$
\[0\to G\to  E\stackrel{\pi\circ \phi}{\longrightarrow} F''\to 0.\] 
Let 
\[W=V\cap H^0(G(m)).\]
 Then we have
 \begin{equation}\label{GEF''}
h^0(F''(m))\geq h^0(E(m))-h^0(G(m))\geq \dim V-\dim W.
\end{equation}

Let $r''=r(F'')$. Let's consider the leading coefficients of two sides of (\ref{WG}), viewed as polynomials in $l$. (This is where the argument diverges, depending on the degree of $\delta$. Here, we focus on the case where $\deg\delta\geq d$.) 
Then
\begin{eqnarray}\label{VW}(2r(G)-\epsilon_W(\sigma))\dim V\geq(2r-1)\dim W.\end{eqnarray}
Combining (\ref{GEF''},\ref{VW}), we have
\begin{eqnarray*}\frac{h^0(F''(m))}{2r''-\epsilon(\pi\circ\phi\circ\alpha)}
\geq\frac{\dim V}{2r-1}\cdot\frac{2r''-(1-\epsilon_W(\sigma))}{2r''-\epsilon(\pi\circ\phi\circ\alpha)}
\geq\frac{P(m)}{2r-1}\end{eqnarray*}
To prove the second inequality, notice that, when 
\[\epsilon(\pi\circ\phi\circ\alpha)=0,\]
 $\im \alpha\subset G$. Therefore $\im\sigma\subset H^0(G(m))$. Thus, $\im\sigma\subset W$. 
 
 According to Lemma~\ref{EquiSS2}, the pair $(F,\phi\circ\alpha)$ is semistable. Therefore, by our choice of $m$, $h^0(F(m))=P(m)$. We have the following commutative diagram 
 \[\begin{tikzcd}
 V\otimes\oo_{X}(-m)\arrow{r}{\sim} \arrow{rd}{q} 
 &H^0(E(m))\otimes\oo_{X}(-m)\arrow{r}{\sim} \arrow{d}{\rm ev} 
 &H^0(F(m))\otimes\oo_{X}(-m)\arrow[two heads]{d}{\rm ev}\\
 &E\arrow{r}{\phi} &F
 \end{tikzcd}\]
  
So $\phi$ is surjective. Since they have the same Hilbert polynomial, it is an isomorphism. Therefore, $(E,\alpha)$ is a semistable pair.

Next, we assume that $(E,\alpha)$ is semistable, thus stable, and $q(m)$ induces an isomorphism between global sections. For any nontrivial proper subspace $W< V$, let
\[ G= q(W\otimes\oo(-m))\]
 and $(G,\alpha^{\prime})$ the corresponding sub-pair. If $(G,\alpha^{\prime})=(E,\alpha)$, the inequality in Lemma~\ref{ReIntGIT} holds. Assume that $(G,\alpha^{\prime})$ is a proper sub-pair. According to Lemma~\ref{EquiSS2}, we have
\[\frac{h^{0}(G(m))}{2r(G)-\epsilon(\alpha^{\prime})}< \frac{h^{0}(E(m))}{2r-1}.\]
From the commutative diagram
\[\begin{tikzcd}
W\arrow{r}\arrow[hook]{d}&H^0(G(m))\arrow[hook]{d}\\
V\arrow{r}{\cong}&H^0(E(m))\end{tikzcd},\]we know that $\dim W\leq h^{0}(G(m))$. Thus,
\[\frac{\dim W}{2r(G)-\epsilon(\alpha^{\prime})}< \frac{h^{0}(E(m))}{2r-1}.\]Therefore,\[r(G)>\frac{1}{2}\epsilon(\alpha^{\prime})-\frac{1}{2}\cdot\frac{\dim W}{\dim V}+r\frac{\dim W}{\dim V},\]which implies the inequality in Lemma~\ref{ReIntGIT}, since $\epsilon(\alpha^{\prime})\geq \epsilon_{W}(\sigma)$. Hence, $([\sigma],[q])$ is GIT-stable.
\end{proof}
We still need the following lemma, which will help us identify closed orbits. A pair is {\it polystable} if it is isomorphic to a direct sum of stable pairs, degenerate or not, with the same reduced Hilbert polynomial. 
\begin{lem}
\label{closed-orbits}The closures in $\bar Z^{ss}$ of orbits of two points, $([\sigma_{1}],[R_{1}])$ and $([\sigma_{2}],[R_{2}])$, intersect if and only if their associated semistable pairs $(E_{1},\alpha_{1})$ and $(E_{2},\alpha_{2})$ have the same Jordan-H\"older factors. The orbit of a point $([\sigma],[q])$ is closed if and only if the associated pair $(E,\alpha)$ is polystable. 
\end{lem}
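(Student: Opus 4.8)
The plan is to follow the standard Hilbert--Mumford analysis of orbit closures (compare \cite[\S4.3--4.4]{huybrechts-lehn-moduli-of-sheaves}), the central device being a one-parameter subgroup adapted to the Jordan--H\"older filtration. By Proposition~\ref{gitss-semistable}, every point of $\bar Z^{ss}$ carries an associated semistable pair $(E,\alpha)$ together with a canonical identification $V\stackrel{\sim}{\to}H^{0}(E(m))$; conversely the $\SL(V)$-orbit recovers the pair up to isomorphism, so two points lie in one orbit exactly when their pairs are isomorphic. I would first record that the associated graded $\gr(E,\alpha)=\bigoplus_i\gr_i$ of the Jordan--H\"older filtration is polystable and semistable, hence defines a point of $\bar Z^{ss}$.

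First I would construct, for $\xi=([\sigma],[q])$ with pair $(E,\alpha)$ and Jordan--H\"older filtration $0\subsetneqq(F_1,\alpha_1)\subsetneqq\cdots\subsetneqq(F_l,\alpha_l)=(E,\alpha)$, the one-parameter subgroup $\lambda$ whose weight filtration on $V$ is $V_i=H^{0}(F_i(m))$; this is well defined because $m$ is large and $V\cong H^{0}(E(m))$. I would normalize the weights $\gamma_i$ so that $\sum_i\gamma_i\dim(V_i/V_{i-1})=0$, making $\lambda$ a $1$-PS of $\SL(V)$. Using Lemma~\ref{mu} I would then compute $\mu^{L}(\xi,\lambda)$ and show it vanishes: since every $\gr_i$ has the same reduced Hilbert polynomial $p_{(E,\alpha)}$, the multiplicities $r(\gr_i)$ and the values $P_{F_i}(l)$ satisfy the exact linear relation that, after substituting $n_1/n_2=P(l)/2r$ from \eqref{linearized-line-bundle}, cancels the $\oo_{Q}(n_2)$-contribution $-n_2\sum_i\gamma_i(P_{F_i}(l)-P_{F_{i-1}}(l))$ against the $\oo_{\p}(n_1)$-contribution $n_1\gamma(\sigma)$ coming from the factor containing $\im\alpha$. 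This vanishing is the numerical heart of the argument and is precisely where the linearization \eqref{linearized-line-bundle} is used.

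Next I would identify the limit. By \cite[Lemma 4.4.3]{huybrechts-lehn-moduli-of-sheaves} the $Q$-component of $\lim_{z\to0}\xi\cdot\lambda(z)$ is $\bigoplus_i H^{0}(\gr_i(l))$, corresponding to the quotient $\bigoplus_i\gr_i$, while the $\p$-component converges to the section supported in the graded factor containing $\im\alpha$; together these give the point attached to $\gr(E,\alpha)$. Because $\mu^{L}(\xi,\lambda)=0$, the limit again lies in $\bar Z^{ss}$ (if $\mu^{L}>0$ the limit would be unstable for $\lambda^{-1}$), so $O_{\gr(E,\alpha)}\subset\overline{O_\xi}$. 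Conversely, for any $1$-PS $\lambda$ with semistable limit one has $\mu^{L}(\xi,\lambda)=0$, and tracing through Lemma~\ref{ReIntGIT} the induced filtration of $E$ has all its graded pieces of reduced Hilbert polynomial $p_{(E,\alpha)}$; when $(E,\alpha)$ is polystable this filtration splits, so the limit is isomorphic to $(E,\alpha)$ and lies in $O_\xi$. Hence the orbit of a polystable point is closed, and it is the unique closed orbit contained in $\overline{O_\xi}$.

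Finally I would assemble the two assertions. If $(E,\alpha)$ is not polystable its Jordan--H\"older filtration is non-split, so $\gr(E,\alpha)\not\cong(E,\alpha)$ and the limit above lies in $\overline{O_\xi}\setminus O_\xi$; thus the orbit is not closed, proving the polystability criterion. For the S-equivalence statement, each intersection $\overline{O_{\xi_i}}\cap\bar Z^{ss}$ contains a unique closed orbit, which by the above is $O_{\gr(E_i,\alpha_i)}$, so the two closures meet if and only if $\gr(E_1,\alpha_1)\cong\gr(E_2,\alpha_2)$, that is, if and only if the pairs share the same Jordan--H\"older factors. The main obstacle is the identity $\mu^{L}(\xi,\lambda)=0$ together with the matching of the $\p$- and $Q$-contributions and the splitting argument in the polystable case; the surrounding reductions are formal GIT.
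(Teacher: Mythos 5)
Your proposal takes a genuinely different route from the paper, but as written it has two concrete gaps. First, your proof that the orbit of a polystable point is closed only examines limits along one-parameter subgroups: you show every $1$-PS limit of $\xi$ with semistable limit stays in $O_\xi$, and conclude ``hence the orbit is closed.'' That inference requires the nontrivial theorem of Kempf that if $O_\xi$ is \emph{not} closed in $\bar Z^{ss}$, then the unique closed orbit in $\overline{O_\xi}$ is actually reached by some $1$-PS limit of $\xi$. The Hilbert--Mumford criterion as stated in Theorem~\ref{hilbert-mumford} characterizes (semi)stability of points, not the orbit-closure structure, so it cannot substitute; you neither cite nor prove the needed statement, and without it the polystability criterion (and hence the identification of the unique closed orbit, on which your S-equivalence argument rests) does not follow. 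Second, your ``numerical heart'' is miscalibrated. You substitute $n_1/n_2=P(l)/2r$ from (\ref{linearized-line-bundle}), but that linearization belongs to the regime $\deg\delta\geq\deg P$, where every non-degenerate semistable pair is stable; moreover every point of $\bar Z^{ss}$ carries a non-degenerate pair (by Corollary~\ref{InjGlobal} the map $V\to H^{0}(E(m))$ is injective, which forces $\alpha\neq0$ since $\sigma\neq 0$), so in that regime all Jordan--H\"older filtrations are trivial, there is no nontrivial $\lambda$ to build, and indeed $\bar Z^{ss}=\bar Z^{s}$ so all orbits are closed for elementary reasons. The lemma has real content --- strictly semistable pairs with nontrivial JH factors --- only when $\deg\delta<\deg P$, and there the linearization is the appendix's $n_1/n_2=P(l)\frac{\delta(m)}{P(m)+\delta(m)}-\delta(l)\frac{P(m)}{P(m)+\delta(m)}$, not $P(l)/2r$; with your constant the asserted cancellation $\mu^{L}(\xi,\lambda)=0$ is false. (A smaller patchable point: ``$\mu^{L}(\xi,\lambda)=0$ implies the limit lies in $\bar Z^{ss}$'' is not valid by itself; semistability of the limit should be deduced by applying Proposition~\ref{gitss-semistable} to $\gr(E,\alpha)$, which is a semistable pair.)

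For contrast, the paper avoids both issues by following the proof of \cite[Theorem 4.3.3]{huybrechts-lehn-moduli-of-sheaves}: the graded object is placed in the orbit closure by elementary deformation families (no weight computation, hence no dependence on the linearization), and the boundary of an orbit is controlled by the semicontinuity of $\Hom$ of \emph{pairs}, Lemma~\ref{semicontinuity}, whose mapping-cone proof is the actual new content supplied by the paper; this argument is uniform in $\delta$ and needs no Kempf-type input. If you want to keep your GIT route, you must (i) invoke Kempf's instability theorem explicitly for the closedness of polystable orbits, and (ii) redo the weight computation with the small-$\delta$ linearization, since that is the only case in which the computation is doing any work.
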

The proof is similar to that of \cite[Theorem 4.3.3]{huybrechts-lehn-moduli-of-sheaves}, using the following lemma on semicontinuity.

\begin{lem}[Semicontinuity]\label{semicontinuity}
Suppose $(\mathscr F, \alpha)$ and $(\mathscr G, \beta)$ over $X_{T}=T\times  X$ are two flat families of pairs, with Hilbert polynomials $P_{\mathscr F}$ and $P_{\mathscr G}$, parametrized by a scheme $T$ of finite type over $k$. Then, the following function is semicontinuous:
\[t\mapsto \dim_{k}\Hom_{ \{t\}\times X}((\mathscr F_{t},\alpha_{t}),(\mathscr G_{t},\beta_{t})).\]
\end{lem}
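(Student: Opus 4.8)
The plan is to prove semicontinuity of the function
\[
t\mapsto \dim_{k}\Hom_{\{t\}\times X}\big((\mathscr F_{t},\alpha_{t}),(\mathscr G_{t},\beta_{t})\big)
\]
by exhibiting $\Hom$ of pairs as the kernel of a map of vector bundles (or more precisely as a fiber of a determinantal/complex construction) to which the standard base-change and semicontinuity machinery applies. The key observation is that a morphism of pairs $\phi:(\mathscr F_t,\alpha_t)\to(\mathscr G_t,\beta_t)$ is the data of a morphism of sheaves $\phi:\mathscr F_t\to\mathscr G_t$ together with a scalar $b\in k$ satisfying $\phi\circ\alpha_t=b\,\beta_t$. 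So I would first recall the definition of a morphism of pairs and then realize the Hom-space of pairs as the kernel of the linear map
\[
\Psi_t:\Hom(\mathscr F_t,\mathscr G_t)\oplus k\longrightarrow \Hom(E_0|_{\{t\}\times X},\mathscr G_t),\qquad (\phi,b)\mapsto \phi\circ\alpha_t-b\,\beta_t,
\]
where I use that $\alpha_t$ and $\beta_t$ have common source (the restriction of $\pi_2^*E_0$). Thus $\Hom$ of pairs $=\ker\Psi_t$.

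First I would set up the relative versions over $T$. The sheaves $\mathscr F$ and $\mathscr G$ are flat over $T$, and $E_0$ is fixed, so by Grothendieck's theory of cohomology and base change (applied to the relative $\EXT$ sheaves, or equivalently to $R\mathcal Hom$ of flat families) the functions
\[
t\mapsto \dim_k\Hom(\mathscr F_t,\mathscr G_t),\qquad t\mapsto \dim_k\Hom(E_0|_{\{t\}\times X},\mathscr G_t)
\]
are each upper-semicontinuous on $T$, and the formation of these Hom-groups, together with the composition-and-subtraction maps $\Psi_t$, is compatible with base change after passing to a suitable bounded complex of vector bundles. Concretely, I would invoke the standard fact that for flat families one may represent $R\pi_{T*}R\HOM(\mathscr F,\mathscr G)$ locally by a bounded complex of finite free modules, so that $\Hom(\mathscr F_t,\mathscr G_t)$ and the relevant composition maps all arise from a single complex whose formation commutes with base change.

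The substance of the argument is then purely linear algebra and semicontinuity: $\ker\Psi_t$ has dimension
\[
\dim_k\ker\Psi_t=\big(\dim_k\Hom(\mathscr F_t,\mathscr G_t)+1\big)-\mathrm{rk}\,\Psi_t,
\]
and since $\dim_k\Hom(\mathscr F_t,\mathscr G_t)$ jumps up on closed sets while $\mathrm{rk}\,\Psi_t$ jumps down on closed sets (rank is lower-semicontinuous for a map of coherent sheaves whose formation commutes with base change), their combination makes $t\mapsto\dim_k\ker\Psi_t$ upper-semicontinuous. I expect the main obstacle to be bookkeeping the base-change compatibility carefully: one must ensure that the source and target of $\Psi_t$ and the map itself all arise by restriction to $\{t\}$ from a fixed map of coherent $\oo_T$-modules (or complexes), so that the two semicontinuity statements can be combined on the same stratification. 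I would handle this by replacing $T$ with an affine base if needed and using the standard perfect-complex representative for $R\HOM$ of flat families, following the treatment of $\dim\Hom$ semicontinuity for sheaves in \cite[Lemma~2.1.7 and Lemma~2.2.7]{huybrechts-lehn-moduli-of-sheaves}; the extra rank-one factor $k$ and the term $-b\,\beta_t$ are harmless additions that do not affect the base-change formalism.
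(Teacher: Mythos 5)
Your strategy is in essence the paper's own: the space you call $\ker\Psi_{t}$ is exactly the fiber product $C_{t}$ that the paper forms from the two maps $\circ\alpha_{t}$ and $\cdot\beta_{t}$, and both arguments ultimately rest on representing the relevant Ext-groups by bounded complexes of finite free modules over an affine base, as in \cite[Lemma 3.4]{MR1316305}. But two of your steps do not hold as written, and the first one touches the actual content of the lemma. The identification ``$\Hom$ of pairs $=\ker\Psi_{t}$'' is false when $\beta_{t}=0$: by the definition of a morphism of pairs, $\phi$ qualifies iff \emph{there exists} $b$ with $\phi\circ\alpha_{t}=b\beta_{t}$, so the Hom-space of pairs is the \emph{image} of the projection $\ker\Psi_{t}\to\Hom(\mathscr F_{t},\mathscr G_{t})$, whose kernel $\{(0,b):b\beta_{t}=0\}$ is one-dimensional exactly when $\beta_{t}=0$. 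Hence $\dim_{k}\Hom((\mathscr F_{t},\alpha_{t}),(\mathscr G_{t},\beta_{t}))=\dim_{k}\ker\Psi_{t}-1+\epsilon(\beta_{t})$, which is precisely the bookkeeping the paper does with its $C_{t}$. This is not pedantry: if $\epsilon(\beta_{t})$ jumps, semicontinuity of $\dim\ker\Psi_{t}$ does not transfer to the Hom of pairs. For instance, take $T=\A^{1}=\spec k[u]$, $E_{0}=\oo_{X}$, $\mathscr F=\mathscr G=\pi_{2}^{*}\oo_{X}$, $\alpha_{T}=\id$, $\beta_{T}=u\cdot\id$: then $\dim\ker\Psi_{t}\equiv 1$, while the Hom-space of pairs has dimension $1$ for $u\neq0$ and $0$ at $u=0$, so it is not upper semicontinuous. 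The paper's proof closes this hole by recording the correction term $-1+\epsilon(\beta_{t})$ and noting that for the families under consideration $\beta_{t}$ is either always zero or never zero; your proof needs the same step (or that hypothesis made explicit), since without it the reduction to $\ker\Psi_{t}$ — and indeed the naive reading of the lemma — breaks down.

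Second, your ``purely linear algebra'' step is unsound as stated: $\Psi_{t}$ is not the fiber of a map of locally free sheaves, because its source $\Hom(\mathscr F_{t},\mathscr G_{t})\oplus k$ and target $\Hom(E_{0},\mathscr G_{t})$ themselves jump in dimension. For such a family of linear maps the rank is \emph{not} lower semicontinuous: if at a special point a new $\phi$ appears in $\Hom(\mathscr F_{t},\mathscr G_{t})$ with $\phi\circ\alpha_{t}\notin k\beta_{t}$, then $\rk\Psi_{t}$ jumps up. So ``dim Hom is USC, rank is LSC, combine'' is not a valid inference here. The correct repair is the one you defer to in your last paragraph but never execute, and it is exactly the paper's proof: lift $(\alpha,-\beta)$ to a map of complexes $\psi:M_{\mathscr F}^{\bullet}\oplus A\to M_{E_{0}}^{\bullet}$ of finite free $A$-modules whose cohomology computes the Ext-groups after arbitrary base change, and identify $\ker\Psi_{t}\cong h^{-1}(C(\psi)\otimes_{A}k(t))$ for the mapping cone $C(\psi)$. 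There the semicontinuity argument does apply, precisely because the terms of $C(\psi)$ are free, so their fiber dimensions are constant and only the ranks of the differentials (honest matrices over $A$) vary. In short, your outline becomes a proof only after (a) inserting the $\epsilon(\beta_{t})$ bookkeeping and (b) replacing the rank argument on $\Psi_{t}$ by the mapping-cone computation.
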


The proof is modified from that of \cite[Lemma 3.4]{MR1316305}.
\begin{proof}The space $\Hom((\mathscr F_{t},\alpha_{t}),(\mathscr G_{t},\beta_{t}))$ is related to the pullback in the following diagram
\begin{equation*}
\begin{tikzcd}
C_{t}
\arrow[dashed]{r} \arrow[dashed]{d}
& k\arrow{d}{\cdot \beta_{t}}\\
\Hom(\mathscr F_{t},\mathscr G_{t}) \arrow{r}{\circ \alpha_{t}}
& \Hom(E_{0}, \mathscr G_{t})
\end{tikzcd},
\end{equation*}
in the sense that it satisfies the following equality
\[\dim \Hom((\mathscr F_{t},\alpha_{t}),(\mathscr G_{t},\beta_{t}))=\dim C_{t}-1+\epsilon(\beta_{t}).\]
By our flatness assumption, $\beta_{t}$ is either always zero or never zero.  
Thus, it is enough to show that $C_{t}$ is a fiber of a common coherent $\oo_{T}$-module, as $t$ varies.

Since the question is local on $T$, assume $T=\spec A$, where $A$ is a $k$-algebra. 

It is shown in the proof of \cite[Lemma 3.4]{MR1316305} that, there is a bounded above complex $M_{E_{0}}^{\bullet}$ of finite type free $A$-modules, such that for any $A$-module $M$,
\begin{equation}\label{complex-ext-eknot}
h^{i}(M_{E_{0}}^{\bullet}\otimes_{A}M)\cong \Ext^{i}_{X_{T}}(\pi_{2}^{*}E_{0}, \mathscr G\otimes_{A} M).
\end{equation}
Similarly, there is such an $M_{\mathscr F}^{\bullet}$ that 
\begin{equation}\label{complex-ext-f}
h^{i}(M_{\mathscr F}^{\bullet}\otimes_{A}M)\cong \Ext^{i}_{X_{T}}(\mathscr F, \mathscr G\otimes_{A} M).
\end{equation}
The morphism $\alpha$ induces a morphism of complexes, which is still denoted as $\alpha:M_{\mathscr F}^{\bullet}\to M_{E_{0}}^{\bullet}$. The morphism $\beta$ induces a morphism $\beta:A\to M_{E_{0}}^{\bullet}$. 
Thus, there is a morphism
\begin{equation*}\psi=(\alpha,-\beta):M_{\mathscr F}^{\bullet}\oplus A\to M_{E_{0}}^{\bullet}.
\end{equation*}
Then the mapping cone $C(\psi)$ fits in the following distinguished triangle
\begin{equation*}
C(\psi)[-1]\to M_{\mathscr F}^{\bullet}\oplus A\to M_{E_{0}}^{\bullet} \to C(\psi) .
\end{equation*}
Taking the long exact sequence, we have
\begin{equation*}
0\to h^{-1}(C(\psi))\to \Hom_{X_{T}}(\mathscr F, \mathscr G)\oplus A\to \Hom_{X_{T}}(\pi_{2}^{*}E_{0}, \mathscr G)\to 
\end{equation*}
Thus, we have the following fiber diagram
\begin{equation*}
\begin{tikzcd}
h^{-1}(C(\psi))
\arrow{r} \arrow{d}
& A \arrow{d}{ \beta}\\
\Hom_{X_{T}}(\mathscr F, \mathscr G) \arrow{r}{ \alpha}
& \Hom_{X_{T}}(\pi_{2}^{*}E_{0}, \mathscr G)
\end{tikzcd}.
\end{equation*}
Therefore, together with (\ref{complex-ext-eknot},\ref{complex-ext-f}) and the isomorphism
\[\Ext^{i}_{X_{T}}(\mathscr F,\mathscr G\otimes k(t))\cong \Ext^{i}_{X_{t}}(\mathscr F_{t},\mathscr G_{t}),\]
we know that
\[C_{t}\cong h^{-1}(C(\psi))\otimes k(t).\]
This finishes the proof.
\end{proof}We can now prove the existence of the moduli space.
\begin{proof}[Proof of Theorem~\ref{MainThm}]
Let 
\[S=S_X(P,\delta)= \bar{Z}\sslash SL(V)\]
 be the GIT quotient. This is a projective scheme. We will show that this is the coarse moduli space of S-equivalence classes of semistable pairs.

Suppose we are given a family of semistable pairs parametrized by $T$ 
\[\beta: \pi_2^*E_0\to \mathscr F,\]
 where $\pi_i$ is the projection from $T\times X$ onto the $i$-th factor.
Let $m$ be chosen as before, then $\pi_{1*}(\mathscr F(m))$ is locally free of rank $P(m)=\dim V$ and we acquire a morphism over $T$
\[\pi_{1*}(\beta(m)):\pi_{1*}(\pi_{2}^*E_0(m))\to \pi_{1*}(\mathscr F(m)).\]
Therefore, there is an open affine cover $T=\cup T_i$, such that over each $T_i$, $\pi_1(\mathscr F(m))|_{T_i}$ is free of rank $P(m)$. 
Choose an isomorphism over $T_i$
\[\omega_i:V\otimes\oo_{T_i}\to \pi_{1*}(\mathscr F(m))|_{T_i}.\]
 Then $\omega_i^{-1}\circ\pi_{1*}(\beta(m))$ induces a morphism $T_i\to \p$.
Also, the quotient 
\[\mathrm{ev}\circ \pi^*_1(\omega_i):V\otimes\oo_X(-m)\stackrel{\cong}{\to}\pi_1^*\pi_{1*}(\mathscr F(m))\otimes \oo_X(-m)\twoheadrightarrow \mathscr F\]
 over $T_i\times X$
 induces a morphism $T_{i}\to Q$.
Thus, they induce a morphism 
\[f_i:T_i\to \p\times Q.\]
 By the definition of $Z$ and Proposition~\ref{gitss-semistable}, $f_{i}$ factors through $\bar Z^{ss}$.
Therefore, we obtain unambiguously a morphism 
\[f_{\beta}:T\to S.\]
Thus, we have a natural transformation 
\[\mathcal{S}=\mathcal{S}_X(P,\delta)\to \mathrm{Mor\,}(-,S).\]

Suppose there is a natural transformation 
\begin{equation}\label{coarse-natural}\mathcal{S}\to \mathrm{Mor\,}(-,N).\end{equation}
Let $T=\bar{Z}^{ss}$. Universal families (\ref{universal-family-p},\ref{universal-family-q}) induce 
\[H^0(E_0(m))\otimes \oo_{X}(-m)\to V\otimes \oo_{\p}(1) \otimes \oo_{X}(-m)\twoheadrightarrow \mathscr E\otimes \oo_{\p}(1).\]Over $T$, the composition induces a family
\begin{equation}\label{family-before-quotient}
\pi^{*}_{2}E_{0}\to \mathscr E\otimes \oo_{\p}(1),
\end{equation}
thus an element in $\mathcal S(T)$. This in turn produces a map 
\[T=\bar{Z}^{ss}\to N.\] 
Because the transformation (\ref{coarse-natural}) is natural, this map is $\SL(V)$-equivariant, with the action on $N$ being trivial.
According to properties of a quotient, the map factors uniquely through $S$.
Therefore, we have the following commutative diagram of functors
\[\begin{tikzcd}
\mathcal{S} \arrow{r} \arrow{dr} &\mathrm{Mor\,}(-,S)\arrow{d}\\
& \mathrm{Mor\,}(-,N).
\end{tikzcd}
\]
Moreover, closed points in $S$ are in bijection with S-equivalence classes of semistable pairs, according to Lemma~\ref{closed-orbits}.
Thus, $S$ is the coarse moduli space.

Let us consider the open set $\bar{Z}^s\subset\bar{Z}^{ss}$ of stable points. 
The geometric quotient 
\[\bar{Z}^s\to \bar{Z}^s/{\rm SL}(V)=S^{s}(P,\delta)=S^{s}\] provides a quasi-projective scheme parametrizing equivalence classes of stable pairs. 
We shall prove this quotient to be a principal $\mathrm{PGL}(V)$-bundle. 
It is enough to show that the stabilizers are products of the identity matrix and roots of unity.

 Suppose a point $([\sigma],[q])\in \bar Z^s$ gives rise to a stable pair $\alpha:E_0\to E$ and $([\sigma],[q])$ is fixed by $g\in \SL(V)$, that is,
\[[\sigma]=[g^{-1}\circ\sigma]\mbox{\quad and \quad}[q]=[q\circ g].\]
Then there is a scalar $a\in k^{\times}$, such that $g^{-1}\circ \sigma=a\sigma$, and there is an isomorphism $\phi:E\to E$, such that $\phi\circ q=q\circ g$. Therefore, 
\[\phi\circ\alpha\circ{\rm ev}=a\alpha\circ{\rm ev}:H^{0}(E_{0}(m))\otimes \oo_{X}(-m)\to E.\]
So, $\phi\circ\alpha=a\alpha$. 
Thus, $\phi$ is a multiplication by a nonzero scalar, by Lemma~\ref{SmallLarge}. 
In the following diagram
\[\begin{CD}V@>H^0(q(m))>>H^0(E(m))\\
@VgVV @VVH^0(\phi(m))V\\
V@>H^0(q(m))>>H^0(E(m)),\end{CD}\]
 the horizontal arrows are isomorphisms and the right vertical arrow is a multiplication by a nonzero scalar. Therefore, $g$ is also a multiplication by a nonzero scalar. Because $g$ lies in $\SL(V)$, it is the product of a root of unity and the identity matrix.

In the family (\ref{family-before-quotient}), $\mathscr E\otimes \oo_{\p}(1)$ is $\SL(V)$-equivariant.
 Although the actions of the center of $\SL(V)$ on $ \oo_{\p}(1)$ and $\mathscr E$ are not trivial,
  its action on $\mathscr E\otimes \oo_{\p}(1)$ is. Thus, $\mathscr E\otimes \oo_{\p}(1)$ is $\rm{PGL}(V)$-equivariant.
Therefore, the restriction of (\ref{family-before-quotient}) to $\bar Z^{ss}\times X$ descends to $S^s\times X$ to give a universal family of pairs. 
Hence, $S^s$ represents the functor $\mathcal{S}^s_X(P,\delta)$.
\end{proof}

\vskip45pt

\section{Deformation and obstruction theories}\label{deformation-theory}
\vskip35pt

This section is devoted to the proof of Theorem~\ref{deformation-obstruction}, following \cite{huybrechts-lehn-moduli-of-sheaves,MR1966840}. In \ref{constructions}, we will outline the construction of the obstruction class and identify the deformation space. In \ref{proofs}, we will fill in the proofs.
\vskip20pt
\subsection{Constructions}\label{constructions}

Suppose $(E,\alpha)$ is a stable pair and
 \begin{equation*}
 0\to K\to B\stackrel{\sigma}{\to} A\to 0
 \end{equation*}
is a short exact sequence, where $A,B\in \mathcal Art_{k}$, such that $\frak m_{B}K=0$. 
Suppose
\[\alpha_{A}:E_{0}\otimes A\to E_{A}\]
over $X_{A}=X\times \spec A$ 
is a (flat) extension of $(E,\alpha)$. Let 
\[I^{\bullet}_{A}=\{E_{0}\otimes A\to E_{A}\}\]
 denote the complex positioned at $0$ and $1$. We would like to extend $(E_{A},\alpha_{A})$ to a pair $(E_{B},\alpha_{B})$ over $X_{B}$. This is similar to deforming a sheaf or a perfect complex. But we need to fix $E_{0}$.

We take two locally free resolutions $P^{\bullet}\stackrel{\sim}{\to } E_{0}$ and $Q_{A}^{\bullet}\stackrel{\sim}{\to } E_{A}$
and lift $\alpha_{A}$ to a morphism of complexes $\alpha^{\bullet}_{A}:P^{\bullet}\otimes A\to Q_{A}^{\bullet}$. Then, we have the following commutative diagram

\begin{equation*}
\begin{tikzcd}
\cdots \arrow{r}{d_{P}^{-2}\otimes A} 
&  P^{-1}\otimes A \arrow{r}{d_{P}^{-1}\otimes A} \arrow{d}{\alpha_{A}^{-1}} 
&  P^{0}\otimes A \arrow{r} \arrow{d}{\alpha_{A}^{0}} 
&E_{0}\otimes A \arrow{r} \arrow{d}{\alpha_{A}}
&0\\
\cdots \arrow{r}{d_{Q_{A}}^{-2}} 
&  Q_{A}^{-1} \arrow{r}{d_{Q_{A}}^{-1}}  
&  Q_{A}^{0} \arrow{r} 
&E_{A}\arrow{r} 
&0
\end{tikzcd},
\end{equation*}
where 
\begin{equation}\label{entries-resolution}P^{i}= V^{i}\otimes \oo_{X}(-m_{i})\quad\mbox{and}\quad Q_{A}^{i}=W^{i}\otimes \oo_{X_{A}}(-n_{i}).\end{equation}
 Here, $V^{i}$ and $W^{i}$ are vector spaces and $m_{i},n_{i}\in \N$. Then, 
\begin{equation*}
Q^{\bullet}=Q_{A}^{\bullet}\otimes _{A}k\end{equation*} 
is a resolution of $E$, because $E_{A}$ is flat over $A$. 

We can view the morphism $\alpha_{A}$ as a morphism between complexes concentrated at degree $0$, then $I^{\bullet}_{A}$ can be viewed as a mapping cone
\[I^{\bullet}_{A}\cong C(\alpha_{A})[-1]\cong C(\alpha_{A}^{\bullet})[-1].\]
For the sake of notations, we write down the mapping cone explicitly:
\begin{equation*}
\cdots\to P^{-1}\otimes A\oplus Q_{A}^{-2} \stackrel{d_{A}^{-2}}{\longrightarrow}
P^{0}\otimes A\oplus Q_{A}^{-1} \stackrel{d_{A}^{-1}}{\longrightarrow}
 Q_{A}^{0}\to 0,
\end{equation*}
where 
\begin{equation}
\label{differential-cone}d_{A}^{i}=
\bigg(\begin{array}{cc}
-d_{P}^{i+1}\otimes A & 0\\
\alpha_{A}^{i+1} & d_{Q_{A}}^{i}
\end{array}\bigg).\end{equation}

We lift $d^{i}_{Q_{A}}$ to $d^{i}_{Q_{B}}$, getting a sequence
\begin{equation*}
(Q_{B}^{i},d_{Q_{B}}^{i})_{i\leq 0},\quad\mbox{where } Q_{B}^{i}=W^{i}\otimes \oo_{X_{B}}(-n_{i}).
\end{equation*}
We also lift $\alpha_{A}^{i}:P^{i}\otimes A\to Q_{A}^{i}$ to 
\begin{equation*}\alpha_{B}^{i}:P^{i}\otimes B\to Q_{B}^{i}.\end{equation*}
We then obtain a sequence 
\begin{equation}\label{cone-sequence}
(P^{i+1}\otimes B\oplus Q_{B}^{i},d_{B}^{i})_{i\leq 0},
\end{equation}
where $d_{B}^{i}$ is similar to $d_{A}^{i}$ in (\ref{differential-cone}). 
This is not necessarily a complex: 
\begin{equation}\label{differential-composition}
d^{i}_{B}\circ d^{i-1}_{B} =
\bigg(\begin{array}{cc}
0 &0 \\
-\alpha_{B}^{i+1} \circ (d_{P}^{i}\otimes B) + d_{Q_{B}}^{i}\circ \alpha^{i}_{B}
& d_{Q_{B}}^{i} \circ d_{Q_{B}}^{i-1}
\end{array}\bigg)
\mbox{ \ may not vanish.}\end{equation} 
But when it is a complex, $(Q_{B}^{\bullet},d_{Q_{B}}^{\bullet})$ forms a complex and $\alpha_{B}^{\bullet}:P^{\bullet}\otimes B\to Q_{B}^{\bullet}$ is a morphism of complexes. Thus,
\begin{equation*}H^{0}(\alpha_{B}^{\bullet}):E_{0}\otimes B\to H^{0}(Q_{B}^{\bullet},d_{Q_{B}}^{\bullet})
\end{equation*}
provides a flat extension of $\alpha_{A}$, according to Lemma~\ref{complex-implies-exact}.
 
The lower row of (\ref{differential-composition}) constitutes a map
\begin{equation}\label{degree-2-map}
P^{\bullet}[1]\otimes B\oplus Q_{B}^{\bullet}\to Q_{B}^{\bullet}[2].
\end{equation}
When restricted to $X_{A}$, it becomes zero. Moreover, $\frak m_{B}K=0$. The map above induces a map\footnote{The argument to deduce (\ref{obstruction-morphism-complex}) from (\ref{degree-2-map}) will be applied repeatedly. } 
\begin{equation}\label{obstruction-morphism-complex}
(\omega_{P}^{\bullet},\omega_{Q}^{\bullet})
:C(\alpha^{\bullet}) \to Q_{B}^{\bullet}[2]\otimes_{B} K\cong Q^{\bullet}[2]\otimes_{k} K. 
\end{equation}
We claim that $(\omega_{P}^{\bullet},\omega_{Q}^{\bullet})$ is a morphism of complexes, which will be proven, Lemma~\ref{obstruction-morphism-complex-proof}.
This induces a class, which will be shown to be the obstruction class,
\begin{equation*}
\ob(\alpha_{A},\sigma)=[(\omega_{P}^{\bullet},\omega_{Q}^{\bullet})] \in \Hom_{K(X)}(C(\alpha^{\bullet}),Q^{\bullet}[2]\otimes_{k} K).
\end{equation*}

To identify $\Hom_{K(X)}(C(\alpha^{\bullet}),Q^{\bullet}[2]\otimes K)$ with $\Ext^{1}_{D(X)}(I^{\bullet},E\otimes K)$ in the theorem, 
we only need to take (\ref{entries-resolution}) to be very negative such that
\[H^{i}(X,E(m_{j}))=0\quad \mbox{and}\quad H^{i}(X,E(n_{j}))=0,\quad\forall i>0\ \mbox{and }j\leq 0.\]
 Because, then%complete proof?
\begin{equation*}\Ext^{1}_{D(X)}(I^{\bullet},E\otimes K)\cong \Hom_{K(X)}(C(\alpha^{\bullet}),E[2]\otimes K) \cong\Hom_{K(X)}(C(\alpha^{\bullet}),Q^{\bullet}[2]\otimes K).
\end{equation*}

Suppose we have two extensions $\alpha_{B}:E_{0}\otimes B\to E_{B}$ and $ \beta_{B}:E_{0}\otimes B\to F_{B}$, which 
 arise from liftings 
 \[\{d_{E_{B}}^{i}:Q_{B}^{i}\to Q_{B}^{i+1},\alpha_{B}^{i}:P^{i}\otimes B\to Q_{B}^{i}\}\quad\mbox{and} \quad\{d_{F_{B}}^{i}:Q_{B}^{i}\to Q_{B}^{i+1},\beta_{B}^{i}:P^{i}\otimes B\to Q_{B}^{i}\}.\] 
The differences $d_{E_{B}}^{i}-d_{F_{B}}^{i}$ and $\alpha_{B}^{i}-\beta_{B}^{i}$ induce  a morphism of complexes
\begin{equation}\label{deformation-complex-morphism}
(f_{P}^{\bullet},f_{Q}^{\bullet}):C(\alpha^{\bullet})\to Q^{\bullet}[1]\otimes K.\end{equation}
This induces a class
\begin{equation*}%\label{deformation}
v=[(f_{P}^{\bullet},f_{Q}^{\bullet})]\in \Hom_{K(X)}(C(\alpha^{\bullet}),Q^{\bullet}[1]\otimes K)\cong \Ext^{1}_{D(X)}(I^{\bullet}, E\otimes K).
\end{equation*}
Conversely, given $\alpha_{B}$ and $(f_{P}^{\bullet},f_{Q}^{\bullet})$, we can produce another extension $\beta_{B}$.

Moreover, $\alpha_{B}$ and $\beta_{B}$ are equivalent if and only if $v=0$.

\vskip20pt
\subsection{Proofs}\label{proofs}
In this sub-section, we fill in the proofs of several claims we made in \ref{constructions}. We will assume the independence of choices in \ref{obs-cl} and provide proofs of independence in \ref{independence-choices}. To simplify the notation, we will sometimes omit the superscripts in maps between complexes, such as $\alpha^{\bullet}$ and $\alpha^{i}$.
\vskip15pt
\subsubsection{Obstruction classes}\label{obs-cl}
We first show that $\ob(\alpha_{A},\sigma)$ is an obstruction class.

Suppose an extension $(E_{B},\alpha_{B})$ exists. The definition of $\ob(\alpha_{A},\sigma)$ does not depend on the choice of the resolution $Q_{A}^{\bullet}$.
 We can assume $(E_{B},\alpha_{B})$ arises by lifting $d_{Q_{A}}^{i}$ and $\alpha_{A}^{i}$, 
making $Q_{B}^{\bullet}$ into a complex and $\alpha^{\bullet}_{B}$ a morphism of complexes. 
Then, $(\omega_{P}^{\bullet},\omega_{Q}^{\bullet})=0$.  
Thus, $\ob(\alpha_{A},\sigma)=0$.

Conversely, suppose $\ob(\alpha_{A},\sigma)=0$. It is enough to show that $(\omega_{P}^{\bullet},\omega_{Q}^{\bullet})=0$,  
after possible modifications of the liftings.
The vanishing of $\ob(\alpha_{A},\sigma)$ is equivalent to that $(\omega_{P}^{\bullet},\omega_{Q}^{\bullet})$ is homotopic to $0$. 
Let
$(g^{\bullet}_{P},g^{\bullet}_{Q})$ be a homotopy. 
By abuse of notation, let $\iota$ denote inclusions 
\[  
\iota: Q_{B}^{i}\otimes K\hookrightarrow Q_{B}^{i}.\]
Similarly, $\pi$ denotes the corresponding quotients, 
\[\pi :P^{i}\otimes B\twoheadrightarrow P^{i} \quad\mbox{and}\quad \pi: Q_{B}^{i}\twoheadrightarrow Q^{i}.\]
We can replace $\alpha_{B}$ and $d_{Q_{B}}$ by 
\begin{equation*}
\alpha_{B}-\iota\circ g_{P}\circ \pi \quad \mbox{and} \quad d_{Q_{B}}-\iota\circ g_{Q}\circ \pi, \quad \mbox{resp.,}
\end{equation*}
then the new $(\omega_{P}^{\bullet},\omega_{Q}^{\bullet})$ is zero.

 The following well-known lemma is central to our argument. For completeness, we give a proof here.
 
\begin{lem}\label{complex-implies-exact}Let $(Q_{A}^{\bullet},d^{\bullet}_{Q_{A}})$ be a sequence of the form $Q_{A}^{i}\cong W^{i}\otimes \oo_{X_{A}}(-n_{i})$, $i\leq 0$, such that
\[(Q^{\bullet}_{A},d_{Q}^{\bullet})\otimes_{A}k\cong (Q^{\bullet},d^{\bullet})\]
is a resolution of $E$. If $(Q^{\bullet}_{A},d_{Q_{A}}^{\bullet})$ is a complex, then
it is exact except at the $0$-th place and 
the cohomology $H^{0}(Q^{\bullet}_{A},d_{Q_{A}}^{\bullet})$ is an extension of $E$ flat over $A$.
\end{lem}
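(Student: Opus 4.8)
The plan is to exploit the flatness of $E_A$ over $A$ together with the local structure of $\mathcal Art_k$, reducing exactness of the lift to a statement that is controlled by the known exactness after tensoring with $k$. The essential tool will be the local criterion for flatness, combined with an induction on the length of $A$ via small extensions, which is the natural setting since every morphism in $\mathcal Art_k$ factors through such extensions. First I would set up the induction: if $A=k$ there is nothing to prove, since $(Q^\bullet,d^\bullet)$ is assumed to be a resolution of $E$. For the inductive step, I would choose a small extension $0\to (t)\to A\to A'\to 0$ with $\mathfrak m_A\cdot(t)=0$, so that $(t)\cong k$ as an $A$-module. Tensoring the sequence $(Q_A^\bullet,d_{Q_A}^\bullet)$ with this short exact sequence of $A$-modules gives, for each degree, a short exact sequence relating $Q_A^i$, $Q_{A'}^i$, and $Q^i$ (the latter because $(t)\cong k$ and $Q_A^i$ is a free, hence flat, $\mathcal O_{X_A}$-module).

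Next I would run the cohomology long exact sequence of this short exact sequence of complexes. By the inductive hypothesis $(Q_{A'}^\bullet,d_{Q_{A'}}^\bullet)$ is exact off degree $0$ with flat cohomology $E_{A'}$, and the base case gives that $(Q^\bullet,d^\bullet)$ is exact off degree $0$ with cohomology $E$. The long exact sequence then squeezes the cohomology of $(Q_A^\bullet,d_{Q_A}^\bullet)$: in negative degrees the outer terms vanish, forcing $H^i(Q_A^\bullet)=0$ for $i<0$. For the top degree I would examine the tail
\[
0\to H^0(Q_A^\bullet)\otimes_A k \to \text{(connecting maps)} \to \cdots
\]
more carefully, so that $H^0(Q_A^\bullet,d_{Q_A}^\bullet)$ surjects onto $E_{A'}$ with kernel $E\otimes_k(t)\cong E$; this identifies $H^0$ as an extension of $E_{A'}$ by $E$ lying over $A$, hence an extension of $E$ in the sense required.

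The main obstacle, and the step I would treat most carefully, is verifying the \emph{flatness} of $H^0(Q_A^\bullet,d_{Q_A}^\bullet)$ over $A$, rather than merely computing it as an abstract cohomology group. Here I would invoke the local criterion for flatness: an $A$-module (or sheaf flat in the fibered sense) $M$ is flat over $A$ if and only if, for the small extension above, the natural map $M\otimes_A(t)\to M$ induces the expected exactness, equivalently $\mathrm{Tor}_1^A(M,k)=0$. Concretely, flatness of $H^0$ over $A$ reduces to showing that the connecting homomorphism in the long exact sequence is injective, i.e. that tensoring the resolution with $0\to(t)\to A\to A'\to 0$ does not create spurious $H^{-1}$ terms — but this is exactly the vanishing $H^{-1}(Q^\bullet)=0$ coming from the base case, since $(t)\cong k$. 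Thus the flatness and the exactness feed into one another through the same long exact sequence, and organizing this interdependence cleanly is the crux of the argument. Once flatness is established, the identification of $H^0$ as a flat extension of $E$ over $A$ follows formally.
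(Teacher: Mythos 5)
Your proposal is correct, and it rests on the same two pillars as the paper's own proof: long exact sequences obtained by tensoring $Q_A^{\bullet}$ with a short exact sequence of $A$-modules, and the flatness criterion for square-zero extensions (Lemma~\ref{flatness-square-zero-extension}). The decomposition of $A$ is genuinely different, though. The paper runs \emph{two} separate inductions along the $\frak m_A$-adic filtration: a descending one, tensoring with $0\to \frak m_A^{i+1}\to \frak m_A^{i}\to \frak m_A^{i}/\frak m_A^{i+1}\to 0$, to get exactness off degree $0$, and then an ascending one over the quotients $A/\frak m_A^{i}$ to get flatness of $H^0$. You instead run a single induction on the length of $A$ through an arbitrary small extension $0\to (t)\to A\to A'\to 0$, extracting exactness and flatness simultaneously from one long exact sequence; this is tidier, and it works precisely because $A'\in\mathcal Art_{k}$ again, so the full inductive hypothesis (exactness \emph{and} flatness over $A'$) is available at each step, whereas the paper's intermediate objects $\frak m_A^i$ are only $A$-modules, forcing the two-pass structure. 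Two small repairs for a final write-up. First, the injectivity demanded by the flatness criterion, namely that $E_A\otimes_A(t)\cong E\otimes_k(t)\to E_A$ is injective, is controlled by the vanishing of $H^{-1}$ of the \emph{quotient} complex $Q_{A'}^{\bullet}$, i.e.\ by the inductive hypothesis, not by $H^{-1}(Q^{\bullet})=0$ from the base case as you assert; both groups vanish, so nothing breaks, but the citation should be corrected. Second, to invoke the criterion you must identify $E_A\otimes_A A'$ with $E_{A'}$ (and $E_A\otimes_A k$ with $E$); this is not automatic for cohomology of a complex, but holds here because $H^0$ of a complex concentrated in degrees $\leq 0$ is a cokernel and hence commutes with arbitrary base change --- this deserves an explicit sentence.
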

 
 \begin{proof}
 There is a short exact sequence of complexes
\[0\to Q_{A}^{\bullet}\otimes_{A} \frak m_{A}\to Q_{A}^{\bullet}\to Q^{\bullet}\to 0.\]

First, let $n$ be the least integer such that $\frak m_{A}^{n}=0$.
We shall show that for $0\leq i\leq n$, $Q_{A}^{\bullet}\otimes A/\frak m_{A}^{i}$ is exact except at the $0$-th place, by induction on $i$ decreasingly. Tensor $Q_{A}^{\bullet}$ over $A$ with the short exact sequence
\[0\to \frak m_{A}^{n-1}\to \frak m_{A}^{n-2}\to \frak m_{A}^{n-2}/\frak m_{A}^{n-1} \to 0,\]
whose last term is a direct sum of copies of $k$. 
On the other hand, $Q_{A}^{\bullet}\otimes \frak m_{A}^{n-1}\cong Q^{\bullet}\otimes_{k}\frak m_{A}^{n-1}$.
We deduce that the complexes 
$Q_{A}^{\bullet}\otimes\frak m_{A}^{n-1}$
 and $Q_{A}^{\bullet}\otimes\frak m_{A}^{n-2}/\frak m_{A}^{n-1}$ are exact except at the $0$-th places.
 So, from the associated long exact sequence,
 \[Q_{A}^{\bullet}\otimes\frak m_{A}^{n-2}\]
 is also exact except at the $0$-th place. 
 Inductively, we can prove this for $Q_{A}^{\bullet}$.
 
Next, let 
\[E_A=H^0(Q_{A}^\bullet,d_{Q_{A}}^{\bullet}).\]
We shall show that $E_{A}\otimes A/\frak m_{A}^{i}$
 is flat for $1\leq i\leq n$,  by induction on $i$. 

Of course $E_{A}\otimes_{A} A/\frak m_{A}\cong E$ is flat over $A/\frak m_{A}\cong k$. 
Tensor the short exact sequence 
\begin{equation}\label{ses-induction-flatness}
0\to \frak m_{A}/\frak m^{2}_{A}\to A/\frak m^{2}_{A}\to A/\frak m_{A}\to 0\end{equation}
by $Q_{A}^{\bullet}$ over $A$. 
Since the ideal $\frak m_{A}/\frak m^{2}_{A}$ is square zero, we have the short exact sequence of complexes
\[0\to Q^{\bullet}\otimes_{k} \frak m_{A}/\frak m^{2}_{A}  \to Q_{A}^{\bullet} \otimes_{A} A/\frak m^{2}_{A}\to Q^{\bullet}\to 0.\]
The associated long exact sequence degenerates to
\begin{equation}\label{ses-extension}0\to E\otimes \frak m_{A}/\frak m^{2}_{A} \to E_{A} \otimes A/\frak m_{A}^{2}\to E\to 0.\end{equation}
Therefore, 
\[E_{A}\otimes_{A} A/\frak m_{A}^{2}\]
is flat over $A/\frak m_{A}^{2}$, according to Lemma~\ref{flatness-square-zero-extension}. 

Replace (\ref{ses-induction-flatness}) by 
\[0\to \frak m_{A}^{2}/\frak m^{3}_{A}\to A/\frak m^{3}_{A}\to A/\frak m_{A}^{2}\to 0,\]
 we can repeat this argument. Inductively, we can prove $E_{A}$ is flat over $A$.

Similar to (\ref{ses-extension}), we also have the short exact sequence
\begin{equation*}0\to E_{A}\otimes \frak m_{A}\to E_{A}\to E\to 0.\end{equation*}
So, $E_{A}$ is a extension of $E$.
  \end{proof}
  
  For the readers' convenience, we include the following basic lemma about flatness. For a proof, see \cite[Proposition 2.2]{MR2583634}.
\begin{lem}\label{flatness-square-zero-extension}Let $B\to A$ be a surjective homomorphism of noetherian rings whose kernel $K$ is square zero. Then a $B$-module $M^{\prime}$ is flat over $B$ if and only if $M=M^{\prime}\otimes_{B} A$ is flat over $A$ and the natural map $M\otimes _{A}K\to M^{\prime}$ is injective.
\end{lem}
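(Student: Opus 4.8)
The plan is to recast the lemma as the infinitesimal local criterion for flatness and then prove the nontrivial direction by a dévissage onto $A$-modules. First I would apply $M'\otimes_B-$ to the exact sequence $0\to K\to B\to A\to 0$ of $B$-modules. Since $\operatorname{Tor}_1^B(M',B)=0$, this produces
\[
0\to \operatorname{Tor}_1^B(M',A)\to M'\otimes_B K\xrightarrow{\ \mu\ } M'\to M\to 0,
\]
where $\mu(m'\otimes k)=km'$. Because $K^2=0$ and $M=M'/KM'$, a scalar from $K$ moved across a $\otimes_B$ lands in $K^2=0$, so the canonical map gives $M'\otimes_B K\cong M\otimes_A K$, and under this identification $\mu$ is exactly the natural map $M\otimes_A K\to M'$ of the statement. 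Hence its injectivity is equivalent to $\operatorname{Tor}_1^B(M',A)=0$, and the lemma reduces to: $M'$ is $B$-flat if and only if $M$ is $A$-flat and $\operatorname{Tor}_1^B(M',A)=0$.

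The forward implication is immediate: if $M'$ is $B$-flat then $\operatorname{Tor}_1^B(M',A)=0$, and $M=M'\otimes_B A$ is $A$-flat by base change.

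For the converse I would show $\operatorname{Tor}_1^B(M',N)=0$ for every $B$-module $N$, which is equivalent to flatness. The first step is a dévissage: since $K^2=0$, both $KN$ and $N/KN$ are annihilated by $K$, hence are $A$-modules, so the long exact Tor sequence of $0\to KN\to N\to N/KN\to 0$ reduces the problem to $A$-modules $N'$. For such $N'$ I would invoke the change-of-rings (Grothendieck) spectral sequence
\[
E^2_{p,q}=\operatorname{Tor}^A_p\!\big(\operatorname{Tor}^B_q(M',A),\,N'\big)\ \Longrightarrow\ \operatorname{Tor}^B_{p+q}(M',N'),
\]
which applies because $-\otimes_B A$ sends projective $B$-modules to flat (hence $-\otimes_A N'$-acyclic) $A$-modules.

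The crux, and the only place where the hypotheses are consumed, is the vanishing of the two terms contributing to total degree one: $E^2_{1,0}=\operatorname{Tor}^A_1(M,N')=0$ because $M$ is $A$-flat, and $E^2_{0,1}=\operatorname{Tor}^B_1(M',A)\otimes_A N'=0$ by hypothesis. As these are the only subquotients of the abutment $\operatorname{Tor}^B_1(M',N')$, it vanishes; together with the dévissage this gives $B$-flatness of $M'$. I expect the main care to lie in this last bookkeeping; should one prefer to avoid the spectral sequence, the same conclusion follows by a direct cycle-and-boundary computation in the complex $G_\bullet=F_\bullet\otimes_B A$ (for a $B$-free resolution $F_\bullet\to M'$), whose only nonzero low homology is $H_0(G_\bullet)=M$, using $A$-flatness of $M$ to see that the degree-one cycles and boundaries of $G_\bullet\otimes_A N'$ coincide.
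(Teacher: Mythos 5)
Your proof is correct, and it is worth noting that the paper does not actually prove this lemma: it only cites Hartshorne's \emph{Deformation Theory}, Proposition 2.2, so your argument supplies a proof that the paper delegates to a reference. Your route — identifying injectivity of the natural map $M\otimes_A K\to M'$ with the vanishing of $\operatorname{Tor}_1^B(M',A)$ via the four-term exact sequence, then proving $B$-flatness by d\'evissage to $A$-modules and the base-change spectral sequence — is the standard proof of the local criterion of flatness (essentially Matsumura's Theorem 22.3), and it is in spirit the same argument that underlies the cited Proposition 2.2. Each step checks out: the identification $M'\otimes_B K\cong M\otimes_A K$ is valid because $K^2=0$ makes $K$ an $A$-module, so $M'\otimes_B K\cong (M'\otimes_B A)\otimes_A K$; the d\'evissage is legitimate since $KN$ and $N/KN$ are annihilated by $K$; the spectral sequence applies because a projective $B$-resolution $F_\bullet\to M'$ yields a complex $F_\bullet\otimes_B A$ of flat $A$-modules with homology $\operatorname{Tor}_q^B(M',A)$ and with $(F_\bullet\otimes_B A)\otimes_A N'=F_\bullet\otimes_B N'$; and in total degree one the only $E_2$-terms are $\operatorname{Tor}_1^A(M,N')$ and $\operatorname{Tor}_1^B(M',A)\otimes_A N'$, both of which vanish under your hypotheses. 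A small bonus: your argument never uses the noetherian hypothesis, so it establishes the lemma in slightly greater generality than stated.
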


\begin{lem}\label{obstruction-morphism-complex-proof}
(\ref{obstruction-morphism-complex}) is a morphism of complexes.
\end{lem}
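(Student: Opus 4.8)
The plan is to derive the chain-map property as the reduction modulo $K$ of the associativity of composition for the lifted differential. Write $C_B^\bullet=(P^\bullet[1]\otimes B)\oplus Q_B^\bullet$ for the lifted graded module carrying the degree-one map $d_B$ of (\ref{cone-sequence}), whose matrix has the shape of (\ref{differential-cone}). By (\ref{differential-composition}) the curvature $\kappa:=d_B\circ d_B$ has vanishing upper row, so it is given by the two families of maps
\[ h^i=-\alpha_B^{i+1}\circ(d_P^i\otimes B)+d_{Q_B}^i\circ\alpha_B^i,\qquad k^i=d_{Q_B}^i\circ d_{Q_B}^{i-1}, \]
and its image lies in $Q_B^\bullet[2]$. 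Restricting to $X_A$ makes $\kappa$ vanish, since the cone of $\alpha_A^\bullet$ is an honest complex; hence $\kappa$ factors through $Q_B^\bullet[2]\otimes_B K$, and because $\mathfrak{m}_B K=0$ it annihilates $\mathfrak{m}_B C_B^\bullet$ and descends to the graded map $(\omega_P^\bullet,\omega_Q^\bullet)\colon C(\alpha^\bullet)\to Q^\bullet[2]\otimes_k K$ of (\ref{obstruction-morphism-complex}), with $\omega_P^\bullet$ and $\omega_Q^\bullet$ the reductions of $h^\bullet$ and $k^\bullet$.

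Next I would invoke associativity: as degree-three maps one has $d_B\circ\kappa=d_B\circ d_B\circ d_B=\kappa\circ d_B$. Computing both products from the matrix form of $d_B$ and $\kappa$, the upper rows give $0=0$, while the lower rows yield the two relations
\[ d_{Q_B}\circ h+h\circ(d_P\otimes B)=k\circ\alpha_B,\qquad d_{Q_B}\circ k=k\circ d_{Q_B}. \]
These are the Bianchi-type identities measuring that the curvature $\kappa$ is flat with respect to $d_B$; they hold on the nose because composition of morphisms is associative.

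Finally I would reduce these two relations modulo $K$. Since $h$ and $k$ already take values in $Q_B^\bullet\otimes_B K$ and $\mathfrak{m}_B K=0$, in each product the remaining factor---$\alpha_B$ in $k\circ\alpha_B$ and $d_{Q_B}$ in $d_{Q_B}\circ h$ and $d_{Q_B}\circ k$---contributes only through its reduction modulo $\mathfrak{m}_B$, namely $\alpha^\bullet$ and $d_Q^\bullet$. The identities therefore descend to
\[ d_Q\circ\omega_P+\omega_P\circ d_P=\omega_Q\circ\alpha,\qquad d_Q\circ\omega_Q=\omega_Q\circ d_Q, \]
and these are exactly the two components, along the $P$- and $Q$-summands of $C(\alpha^\bullet)$, of the equation $d_{Q^\bullet[2]\otimes K}\circ(\omega_P^\bullet,\omega_Q^\bullet)=(\omega_P^\bullet,\omega_Q^\bullet)\circ d_{C(\alpha^\bullet)}$, where $Q^\bullet[2]$ carries the differential $d_Q$ and $C(\alpha^\bullet)$ the differential of (\ref{differential-cone}). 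Hence $(\omega_P^\bullet,\omega_Q^\bullet)$ commutes with the differentials, which is the claim. I expect the main obstacle to be the bookkeeping in this last reduction: justifying that only the mod-$\mathfrak{m}_B$ reductions of $\alpha_B$ and $d_{Q_B}$ survive---precisely the mechanism already used to pass from (\ref{degree-2-map}) to (\ref{obstruction-morphism-complex})---and keeping the shift signs in $Q^\bullet[2]$ consistent so that the reduced relations match the chain-map equation without a stray sign.
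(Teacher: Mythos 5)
Your proof is correct and takes essentially the same approach as the paper: the identity the paper asks one to verify, $\iota\circ\bigl(d_{Q}\otimes K\circ(\omega_{P},\omega_{Q})-(\omega_{P},\omega_{Q})\circ d_{C(\alpha^{\bullet})}\bigr)\circ\pi=0$, is established exactly by your associativity computation $d_{B}\circ(d_{B}\circ d_{B})=(d_{B}\circ d_{B})\circ d_{B}$ together with the defining equations (\ref{def-degree-2-map}), and your descent step is the paper's trick of concluding from the injectivity of $\iota$ and surjectivity of $\pi$. The only difference is presentational: the paper leaves this computation implicit, while you spell it out.
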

\begin{proof}
We have two equalities
\begin{equation}\label{def-degree-2-map}
-\alpha_{B}\circ d_{P}\otimes B+d_{Q_{B}}\circ \alpha_{B} =\iota\circ \omega_{P}\circ \pi\quad
\mbox{and}\quad d_{Q_{B}}\circ d_{Q_{B}}=\iota\circ \omega_{Q}\circ \pi.
\end{equation}
The map (\ref{obstruction-morphism-complex}) is indeed a morphism: one can show that
\[\iota\circ \Bigg( d_{Q}\otimes K \circ(\omega_{P},\omega_{Q})-(\omega_{P},\omega_{Q})
\bigg(\begin{array}{cc}
-d_{P} & 0\\
\alpha & d_{Q}
\end{array}\bigg)
\Bigg)\circ \pi=0.\]
Because $\iota$ is injective and $\pi$ is surjective, $(\omega_{P},\omega_{Q})$ commutes with differentials.\footnote{The trick using $\iota$ and $\pi$ will be applied repeatedly. }
\end{proof}

\vskip15pt
\subsubsection{Obstructions -- independence of choices}\label{independence-choices}
We now show that the $\ob(\alpha_{A},\sigma)$ is independent of the various choices we have made: $\alpha_{A}^{\bullet}$, $\alpha_{B}^{\bullet}$, $d_{Q_{B}}^{\bullet}$, and $Q_{A}^{\bullet}$.
\vskip4pt
To start, if we choose a different lifting $\alpha_{A}^{\bullet}$ of $\alpha_{A}$, then $(\omega_{P}^{\bullet},\omega_{Q}^{\bullet})$ will only differ by a homotopy.
\vskip4pt
We next show that the morphism $(\omega_{P}^{\bullet},\omega_{Q}^{\bullet})$ is independent of liftings $\alpha_{B}$ and $d_{Q_{B}}$, modulo homotopy.

Let $\alpha^{\prime}_{B}$ and $d^{\prime}_{Q_{B}}$ be different liftings, giving rise to $(\omega_{P}^{\prime\bullet},\omega_{Q}^{\prime\bullet})$. 
The differences $\alpha_{B}-\alpha^{\prime}_{B}$ and $d_{Q_{B}}-d^{\prime}_{Q_{B}}$ induce a map, which will be shown to be a homotopy,
\begin{equation*}
(h_{P}^{\bullet},h_{Q}^{\bullet}):P^{\bullet}[1]\oplus Q^{\bullet} \to Q^{\bullet}[1]\otimes_{k} K. 
\end{equation*}
The related equalities are
\begin{equation}\label{homotopy-different-liftings}\iota\circ h_{P} \circ \pi=\alpha_{B}-\alpha_{B}^{\prime}
\quad\mbox{and}\quad 
\iota\circ h_{Q}\circ \pi= d_{Q_{B}}-d^{\prime}_{Q_{B}}.
\end{equation}
Then, combining (\ref{def-degree-2-map}) and (\ref{homotopy-different-liftings}), we obtain
\begin{eqnarray*}
\omega_{P}-\omega_{P}^{\prime}&=& -h_{P}\circ d_{P} + d_{Q}\otimes K\circ h_{P} + h_{Q}\circ \alpha,\\
\omega_{Q}-\omega_{Q}^{\prime} &=& d_{Q}\otimes K \circ h_{Q} + h_{Q}\circ d_{Q}.
\end{eqnarray*}
Therefore,
\begin{equation*}
(\omega_{P},\omega_{Q})-(\omega_{P}^{\prime},\omega_{Q}^{\prime})= d_{Q}\otimes K\circ (h_{P},h_{Q}) + (h_{P},h_{Q})\bigg(
\begin{array}{cc}
-d_{P} & 0\\
\alpha & d_{Q}
\end{array}\bigg),
\end{equation*}
which means $(\omega_{P}^{\bullet},\omega_{Q}^{\bullet})$ and $(\omega_{P}^{'\bullet},\omega_{Q}^{'\bullet})$ are homotopic.
\vskip4pt
Finally, we show the independence of $Q_{A}^{\bullet}$. 

Let $(R^{\bullet}_{A},d_{R_{A}}^{\bullet})$ be another very negative resolution of the form:
\[R_{A}^{i}=W^{i\prime}\otimes \oo_{X_{A}}(-n_{i}^{\prime}).\]
 Then there is a lifting of the identity map 
$q^{\bullet}_{A}:Q_{A}^{\bullet}\to R^{\bullet}_{A}$,
 unique up to homotopy. 
 Let 
 \[\beta^{\bullet}_{A}= q_{A}^{\bullet}\circ \alpha_{A}^{\bullet}:P^{\bullet}\otimes A\to R^{\bullet}_{A}.\] 

 Moreover, there is a morphism 
 \[{\rm diag}(\id, q^{\bullet}_{A}):C(\alpha^{\bullet}_{A}) \to C(\beta^{\bullet}_{A})
 \]
Lift $q_{A}^{\bullet}$ and $\beta_{A}^{\bullet}$ to  
$q^{\bullet}_{B}:Q^{\bullet}_{B}\to R^{\bullet}_{B}$ and $\beta^{\bullet}_{B}:P^{\bullet}\otimes B\to R^{\bullet}_{B}$.
 Then, we have a map of sequences
 \[{\rm diag}(\id, q^{\bullet}_{B}):P^{\bullet}[1]\otimes B\oplus Q_{B}^{\bullet} \to P^{\bullet}[1]\otimes B\oplus R_{B}^{\bullet}.\]
 This fits in the following square, which is not necessarily commutative,
 \begin{equation}\label{square-2-resolutions}\begin{tikzcd}
 P^{\bullet}[1]\otimes B\oplus Q^{\bullet}_{B} \arrow{r}\arrow{d}[swap]{{\rm diag}(\id, q^{\bullet}_{B})}
 & Q^{\bullet}_{B}[2]\arrow{d}{q^{\bullet}_{B}}\\
 P^{\bullet}[1]\otimes B\oplus R^{\bullet}_{B} \arrow{r} 
 & R^{\bullet}_{B}[2]
 \end{tikzcd}.
 \end{equation}
Here, the two horizontal maps are as defined in (\ref{degree-2-map}).
The square above induces
 \begin{equation*}\begin{tikzcd}
 P^{\bullet}[1]\oplus Q^{\bullet} \arrow{r}{(\omega_{P}^{\bullet},\omega_{Q}^{\bullet})}
 \arrow{d}[swap]{{\rm diag}(\id, q^{\bullet})}
 & Q^{\bullet}[2]\otimes K \arrow{d}{q^{\bullet}}\\
 P^{\bullet}[1]\oplus R^{\bullet} \arrow{r}{(\bar\omega_{P}^{\bullet},\bar\omega_{R}^{\bullet})} 
 & R^{\bullet}[2]\otimes K
 \end{tikzcd}.
 \end{equation*}

To show that $\ob(\alpha_{A},\sigma)$ is independent of the resolution, it is enough to show that the two compositions differ by a homotopy. This is because, if they differ by a homotopy, two classes $[(\omega_{P}^{\bullet},\omega_{Q}^{\bullet})]$ and $[(\bar\omega_{P}^{\bullet},\bar\omega_{R}^{\bullet})]$ are identified via the isomorphism
\[\Hom_{K(X)}(C(\alpha^{\bullet}),Q^{\bullet}[2]\otimes K)\cong \Hom_{K(X)}(C(\beta^{\bullet}),R^{\bullet}[2]\otimes K).\]
Indeed, the difference $d_{R_{B}}\circ q_{B}-q_{B}\circ d_{Q_{B}}$ and $\beta_{B}-q_{B}\circ \alpha_{B}$ induce maps
\[\tau^{\bullet}:Q^{\bullet}\to R^{\bullet}[1]\otimes K \quad \mbox{and}\quad \upsilon^{\bullet}:P^{\bullet}\to Q^{\bullet}\otimes K.\]
There are the following equalities
\begin{eqnarray}
\label{tau-upsilon}d_{R_{B}}\circ q_{B}-q_{B}\circ d_{Q_{B}}=\iota \circ \tau \circ \pi
\quad \mbox{and}\quad
\beta_{B}-q_{B}\circ \alpha_{B} = \iota \circ \upsilon\circ \pi.
\end{eqnarray}

Combining (\ref{def-degree-2-map}) and (\ref{tau-upsilon}), we know that the difference of two compositions in (\ref{square-2-resolutions}) is
\begin{eqnarray*}
&&\iota\circ\big((\bar\omega_{P},\bar\omega_{R})\circ\mbox{diag}(\id, q)- q\circ (\omega_{P},\omega_{Q})\big)\circ \pi\\
&=&(-\beta_{B}\circ d_{P}\otimes B+d_{R_{B}}\circ\beta_{B},\ d_{R_{B}}\circ d_{R_{B}}\circ q_{B} )
\\
&&\quad \quad-q_{B}\circ(- \alpha_{B}\circ d_{P}\otimes B+ d_{Q_{B}}\circ\alpha_{B},  \ d_{Q_{B}}\circ d_{Q_{B}})\\
&=& \iota\circ \big(-\upsilon\circ d_{P} + \tau\circ \alpha +d_{R}\otimes K\circ \upsilon,\  \tau\circ d_{Q}+d_{R}\otimes K\circ \tau
\big)\circ \pi\\
&=&\iota\circ \Bigg ( (\upsilon, \tau)\circ\bigg(
\begin{array}{cc}
-d_{P} & 0\\
\alpha & d_{Q}
\end{array}
\bigg)+d_{R}\otimes K\circ (\upsilon, \tau)\Bigg)\circ \pi. 
\end{eqnarray*}
Thus, $(\upsilon^{\bullet},\tau^{\bullet})$ is a homotopy.

\vskip15pt
\subsubsection{Deformations} 
Assume that the obstruction class $\ob(\alpha_{A},\sigma)$ vanishes.

Suppose there are two extensions:
\[\alpha_{B}:E_{0}\otimes B\to E_{B}\quad \mbox{and} \quad \beta_{B}:E_{0}\otimes B\to F_{B}.\] 
Resolve $E_{B}$ and $F_{B}$ by two very negative complex with identical terms but different differentials:
 $(Q_{B}^{\bullet},d_{E_{B}}^{\bullet})
 $ and $ (Q_{B}^{\bullet},d_{F_{B}}^{\bullet})$.
Then, lift $\alpha_{B}$ and $\beta_{B}$ 
\[\begin{tikzcd}
P^{\bullet}\otimes B \arrow{d}{\alpha_{B}^{\bullet}} \arrow{r}{\sim} & E_{0}\otimes B\arrow{d}{\alpha_{B}}\\
(Q_{B}^{\bullet},d_{E_{B}}^{\bullet}) \arrow{r}{\sim} & E_{B}
\end{tikzcd}\quad\mbox{and}\quad
\begin{tikzcd}
P^{\bullet}\otimes B \arrow{d}{\beta_{B}^{\bullet}} \arrow{r}{\sim} & E_{0}\otimes B\arrow{d}{\beta_{B}}\\
(Q_{B}^{\bullet},d_{F_{B}}^{\bullet}) \arrow{r}{\sim} & F_{B}.
\end{tikzcd}\]
 The differences $d_{E_{B}}^{i}-d_{F_{B}}^{i}$ and $\alpha_{B}^{i}-\beta_{B}^{i}$ induce maps 
\begin{equation*}
f_{Q}^{i}:Q^{i}\to Q^{i+1}\otimes K
\quad\mbox{and}\quad
f_{P}^{i}:P^{i}\to Q^{i}\otimes K.\end{equation*}
One can show that these provide a morphism of complexes
\begin{equation}
(f_{P}^{\bullet},f_{Q}^{\bullet}):C(\alpha^{\bullet})\to Q^{\bullet}[1]\otimes K.\end{equation}
Thus, this induces a class $v$ defined by
\begin{equation*}
v=[(f_{P}^{\bullet},f_{Q}^{\bullet})]\in  
\Ext^{1}_{D(X)}(I^{\bullet}, E\otimes K).
\end{equation*}

Conversely, if we are given an extension $(E_{B},\alpha_{B})$ and a class $v$ represented by $(f_{P},f_{Q})$, then 
\[\beta_{B}=\alpha_{B}-\iota\circ f_{P}\circ \pi\quad \mbox{and}\quad d_{F_{B}}=d_{E_{B}}-\iota \circ f_{Q}\circ \pi\]
produce a morphism of complexes
$P^{\bullet}\otimes B\to (Q_{B}^{\bullet},d_{F_{B}}^{\bullet})$. 
This induces an extension of $(E_{A},\alpha_{A})$:
\[(F_{B},\beta _{B})=(H^{0}(Q_{B}^{\bullet},d_{F_{B}}^{\bullet}),H^{0}(\beta_{B}^{\bullet})).\]

If we choose a different resolution $R_{B}^{\bullet}$ 
and define $(\bar f_{P}^{\bullet},\bar f_{R}^{\bullet})$ similarly as in (\ref{deformation-complex-morphism}), 
then $[(f_{P}^{\bullet},f_{Q}^{\bullet})]$ and $[(\bar f_{P}^{\bullet},\bar f_{R}^{\bullet})]$ are identified under the isomorphism
 \[\Hom_{K(X)}(P^{\bullet}[1]\oplus Q^{\bullet},Q^{\bullet}[1]\otimes K)\cong \Hom_{K(X)}(P^{\bullet}[1]\oplus R^{\bullet},R^{\bullet}[1]\otimes K).\]
So, $v$ is independent of the resolution $Q_{B}^{\bullet}$.

 \vskip6pt
We next show that the difference of two equivalent extensions gives a zero class $v$.
 Indeed, suppose $\alpha_{B}$ and $\beta_{B}$ are equivalent, 
then by Lemma~\ref{SmallLarge}, there is a constant $z\in B$ such that $\beta_{B}=z\alpha_{B}$.
 Denote the image of $z$ in $k$ as $\bar z$.
We have proven that $v$ is independent of resolutions. So, for our convenience, we take the same resolution $Q^{\bullet}_{B}$ for $E_{B}$ and $F_{B}$, and take $\beta^{\bullet}=z\alpha^{\bullet}$. Then $f_{Q}^{\bullet}=0$. Furthermore, $f_{P}^{\bullet}$ in (\ref{deformation-complex-morphism}) is homotopic to zero via homotopy 
\[(0,1-\bar z):P^{i+1}\oplus Q^{i}\to Q^{i}\otimes K.\]
 Thus, the associated $v=0$.

\vskip6pt
It remains to prove that if $(h_{P}^{\bullet}, h_{Q}^{\bullet})$ is a homotopy between $(f_{P}^{\bullet},f_{Q}^{\bullet})$ and zero, then $\alpha_{B}$ and $\beta_{B}$ are equivalent. One can actually check that 
\begin{enumerate}[(i)]
\item $\id -\iota \circ h_{Q}\circ \pi: (Q_{B}^{\bullet}, d_{E_{B}}^{\bullet})\to (Q_{B}^{\bullet}, d_{F_{B}}^{\bullet})$ is a morphism of complexes;
\item $(\id -\iota \circ h_{Q}\circ \pi)\circ \alpha_{B}=\beta_{B}-d_{F_{B}}\circ \iota \circ h_{P}\circ \pi-\iota \circ h_{P}\circ \pi\circ d_{P}\otimes B$.
\end{enumerate}
Hence, there is a morphism $\phi$ commuting two families of stable pairs $\alpha_{B}$ and $\beta_{B}$. Therefore, by Lemma~\ref{SmallLarge}, this is an isomorphism.
\vskip45pt

%----------------
\section{Stable pairs on surfaces}\label{virtual-class}
\vskip35pt

In this section, we assume that $(X,\oo_{X}(1))$ is a smooth projective surface, $E_{0}$ is torsion free, $P$ and $\delta$ are of degree $1$. We shall demonstrate that in these cases, the moduli space of stable pairs admits a virtual fundamental class, proving Theorem~\ref{virtual-fundamental-class}.

To show the existence of the virtual fundamental class, it suffices to show that the obstruction theory is {\it perfect} \cite{MR1437495,MR1467172}. 
That is, 
there is a two term complex of locally free sheaves resolving the deformation and obstruction sheaves. 
In order to do this, we essentially need to show that there are no higher obstructions, which is guaranteed by the following lemma.
\begin{lem}\label{no-higher-obstructions}Fix a stable pair $(E,\alpha)$. Then 
\[\Ext^{i}_{D(X)}(I^{\bullet},E)=0,\quad \mbox{unless } i=0,1.\]\end{lem}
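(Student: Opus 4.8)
The plan is to compute $\Ext^i_{D(X)}(I^\bullet, E)$ directly from the defining triangle of the complex $I^\bullet = \{E_0 \stackrel{\alpha}{\to} E\}$, reducing the question about the complex to questions about the sheaves $E_0$ and $E$, and then use the hypotheses of the surface case ($X$ a smooth projective surface, $E_0$ torsion free, $E$ pure of dimension one) to force vanishing outside degrees $0$ and $1$.

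Let me sketch the key steps. First, since $I^\bullet$ sits (up to shift) in a distinguished triangle relating it to $E_0$ (in degree $0$) and $E$ (in degree $1$), I would apply $\Hom_{D(X)}(-, E)$ to obtain a long exact sequence connecting $\Ext^i(I^\bullet, E)$ to $\Ext^i(E, E)$ and $\Ext^i(E_0, E)$. Concretely, the triangle $E \to I^\bullet[1] \to E_0[1] \to E[1]$ (placing $E_0$ in degree $0$ and $E$ in degree $1$) yields
\begin{equation*}
\cdots \to \Ext^{i}(E_0, E) \to \Ext^{i}(I^\bullet, E) \to \Ext^{i-1}(E, E) \to \Ext^{i+1}(E_0, E) \to \cdots
\end{equation*}
so it suffices to control $\Ext^i(E_0, E)$ and $\Ext^i(E,E)$ for the relevant range of $i$. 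The goal is then to show both of these vanish for $i$ outside a small window.

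The main input is a dimension/depth estimate. Since $X$ is a smooth surface, it has homological dimension $2$, so all $\Ext$-groups between coherent sheaves vanish above degree $2$; this immediately kills $\Ext^i(I^\bullet, E)$ for $i \geq 3$. The crux is the top degree $i=2$. Here I would use local duality or the local-to-global spectral sequence together with the structure of $E$: because $E$ is pure of dimension one on a surface, it has depth $1$ and its $\EXT$-sheaves are controlled, so $\Ext^2(E,E)$ can be related by Serre duality to $\Hom(E, E\otimes \omega_X)^\vee$, and the analogous group $\Ext^2(E_0,E)$ to $\Hom(E, E_0 \otimes \omega_X)^\vee$. The torsion-freeness of $E_0$ ensures there are no nonzero maps from the one-dimensional sheaf $E$ into the twisted $E_0$ (a map from a pure dimension-one sheaf to a torsion-free sheaf on a surface must vanish, as its image would be a dimension-one subsheaf of a torsion-free sheaf), and stability of $(E,\alpha)$ together with Lemma~\ref{SmallLarge} constrains $\Hom(E, E \otimes \omega_X)$, forcing the degree-$2$ contributions to die. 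One also checks the negative degrees: $\Ext^{i}(I^\bullet, E) = 0$ for $i<0$ because $\Ext^{<0}(E_0,E)=0$ and $\Ext^{<-1}(E,E)=0$ trivially, so the only surviving piece in degree $-1$ or below would come from $\Ext^{-1}(E,E)=0$.

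I expect the degree-$2$ vanishing to be the main obstacle, since it is the only place where the hypotheses (surface, $E_0$ torsion free, $\dim E = 1$) are genuinely used and where Serre duality must be invoked carefully for the non-locally-free, impure complex $I^\bullet$. The cleanest route is probably to first establish $\Ext^2(E_0, E) = 0$ and $\Ext^2(E,E)=0$ as separate sheaf statements via duality and the torsion-freeness/purity dichotomy, and only then feed them into the long exact sequence; I would be careful that $\Hom(E, E\otimes\omega_X)$ does not sneak in a nonzero contribution, using that $E$ is \emph{stable} (so its endomorphisms are scalars by Lemma~\ref{SmallLarge}) and that twisting by $\omega_X$ strictly lowers the reduced Hilbert polynomial, ruling out nonzero twisted endomorphisms.
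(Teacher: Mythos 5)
Your underlying strategy --- applying $\Hom(-,E)$ to the triangle $I^{\bullet}\to E_{0}\stackrel{\alpha}{\to} E\to I^{\bullet}[1]$ rather than to the paper's truncation triangle $K\to I^{\bullet}\to Q[-1]\to K[1]$ (with $K=\ker\alpha$ torsion free, $Q=\coker\alpha$ zero-dimensional) --- is viable; indeed the paper uses your triangle in the proof of Proposition~\ref{higher-rank-smooth}. But your long exact sequence is mis-indexed, and the slip is not cosmetic. The correct sequence is
\[
\cdots\to \Ext^{i}(E,E)\to \Ext^{i}(E_{0},E)\to \Ext^{i}(I^{\bullet},E)\to \Ext^{i+1}(E,E)\to \Ext^{i+1}(E_{0},E)\to\cdots,
\]
with the connecting map \emph{raising} the index on $\Ext^{\bullet}(E,E)$, not lowering it as in your display (which, as written, also jumps from $i-1$ to $i+1$ across a single map). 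Because of this error you set yourself the task of proving $\Ext^{2}(E,E)=0$, and that step would fail: by Serre duality $\Ext^{2}(E,E)\cong\Hom(E,E\otimes\omega_{X})^{\vee}$, and on a K3 surface --- precisely the surfaces of Section~\ref{examples} --- $\omega_{X}\cong\oo_{X}$, so this group is $\Hom(E,E)^{\vee}\cong k\not=0$ for a stable pair. Your proposed mechanism, that twisting by $\omega_{X}$ strictly lowers the reduced Hilbert polynomial, is false whenever $\omega_{X}$ is trivial or effective. With the correct indexing the problem evaporates: for $i=2$ the sequence reads $\Ext^{2}(E_{0},E)\to\Ext^{2}(I^{\bullet},E)\to\Ext^{3}(E,E)=0$, so only $\Ext^{2}(E_{0},E)\cong\Hom(E,E_{0}\otimes\omega_{X})^{\vee}=0$ is needed, and your argument for that vanishing (no nonzero map from a pure one-dimensional sheaf into a torsion-free sheaf) is correct.

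The second gap is in negative degrees, which you dismiss as trivial; under the correct indexing they are not. For $i=-1$ one gets
\[
0=\Ext^{-1}(E_{0},E)\to \Ext^{-1}(I^{\bullet},E)\to \Hom(E,E)\stackrel{\circ\alpha}{\longrightarrow}\Hom(E_{0},E),
\]
so $\Ext^{-1}(I^{\bullet},E)$ is the kernel of composition with $\alpha$, and you must show that $\phi\circ\alpha=0$ forces $\phi=0$. This is exactly where stability enters: by Lemma~\ref{GenSurj}, $\coker\alpha$ is zero-dimensional, so such a $\phi$ factors through $\coker\alpha$, making $\im\phi$ a zero-dimensional subsheaf of the pure one-dimensional sheaf $E$, hence zero. (The paper obtains the same vanishing from $\Hom(Q,E)=0$ in its triangle.) With these two repairs --- the corrected indexing, and the degree $-1$ injectivity argument --- your route does prove the lemma, via a genuinely different decomposition of $I^{\bullet}$ than the paper's: yours trades on $E_{0}$ torsion free and $E$ pure directly, while the paper isolates the cohomology sheaves $K$ (torsion free) and $Q$ (zero-dimensional) and never needs the degree $-1$ kernel computation.
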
 
\begin{proof}The stable pair fits into an exact sequence
\begin{equation*}
0\to K\to E_{0}\to E\to Q\to 0,
\end{equation*}
which can be written as a distinguished triangle
\begin{equation*}
K\to I^{\bullet}\to Q[-1]\to K[1]. 
\end{equation*}
 Notice that $K$ is torsion free and $Q$ is $0$-dimensional.

Apply the functor $\Hom(-,E)$ to this triangle. 
The associated long exact sequence is
\newpage 
\begin{table*}[h]
\centering
\begin{tabular}{cccccccc c}
$0$ & $\to$ & $\Hom(Q,E)$ & $\to$  & $\Ext^{-1}(I^{\bullet}, E)$ & $\to$ & $0$ & $\to$ & \\
&& $\Ext^{1}(Q,E)$ & $\to$  & $\Hom(I^{\bullet}, E)$ & $\to$ & $\Hom(K,E)$ & $\to$ &\\
&&$\Ext^{2}(Q,E)$ & $\to$  & $\Ext^{1}(I^{\bullet}, E)$ & $\to$ & $\Ext^{1}(K,E)$ & $\to$ & \\
&& $0$ & $\to$ & $\Ext^{2}(I^{\bullet}, E)$ & $\to$  & $\Ext^{2}(K,E)$ & $\to $ & 0
\end{tabular}
\end{table*}

Because $Q$ is $0$-dimensional and $E$ is pure, $\Hom(Q,E)=0$. 
Therefore, $\Ext^{-1}(I^{\bullet},E)=0$.
The kernel $K$ is torsion free, so
\[\Ext^{2}(K,E)\cong \Hom(E,K\otimes \omega_{X})^{\vee}=0.\]
Thus, 
$\Ext^{2}_{D(X)}(I^{\bullet},E)=0$.
\end{proof}
Using this lemma, the expected dimension of the moduli space can be easily calculated via Hirzebruch-Riemann-Roch, knowing invariants of $E_{0}$.
%\vskip6pt

Now, let 
\[\mathbb I^{\bullet}=\{\pi_{2}^{*}E_{0}\stackrel{\tilde \alpha}{\to } \mathbb E\}\]
 be the universal pair, according to Theorem~\ref{MainThm}. By Theorem~\ref{deformation-obstruction}, the deformation sheaf and the obstruction sheaf are calculated by 
 % explain more about this
\begin{equation*}\label{deformation-obstruction-complex}
R\pi_{*}R\HOM(\mathbb I^{\bullet},\mathbb E).
\end{equation*}
Take a finite complex $P^{\bullet}$ of locally free sheaves resolving $\mathbb E$ and a finite complex $Q^{\bullet}$ of very negative locally free sheaves resolving $\mathbb I^{\bullet}$. Take a finite, very negative locally free resolution $A^{\bullet}$ of $(Q^{\bullet})^{\vee}\otimes P^{\bullet}$. Then 
\begin{equation}
R\pi_{*}R\HOM(\mathbb I^{\bullet},\mathbb E)\cong
R\pi_{*}R\HOM(Q^{\bullet},P^{\bullet})
\cong R\pi_{*}A^{\bullet}.
\end{equation}
Denote this complex as $B^{\bullet}$. By Grothendieck-Verdier duality,
\begin{eqnarray*}
B^{\bullet}=R\pi_{*}A^{\bullet}&\cong& R\pi_{*}R\HOM(A^{\bullet\vee}\otimes \omega_{X},\omega_{X})\\
&\cong& R\HOM(R\pi_{*}(A^{\bullet\vee}\otimes \omega_{X})[-2],\oo)
\end{eqnarray*}
Moreover, notice that 
\[R\pi_{*}(A^{\bullet\vee}\otimes \omega_{X})=\pi_{*}(A^{\bullet\vee}\otimes \omega_{X})\]
 is a complex of locally free sheaves, due to the negativity of $A^{j}$'s. Thus, $B^{\bullet}$ is a complex of locally free sheaves as well. Denote the differentials as $d^{i}$'s.

Next, we show that $B^{\bullet}$ can be truncated to degree $0$ and $1$. 
The cohomologies of $B^{\bullet}$ concentrate at degree $0$ and $1$, by Lemma~\ref{no-higher-obstructions}. 
Suppose $B^{i\geq2}$ is the last term that is nonzero. 
Both $B^{i}$ and $B^{i-1}$ are locally free, then $\ker d^{i-1}$ is also locally free. 
Replace $B^{i}$ by zero and $B^{i-1}$ by $\ker d^{i-1}$. 
We get a new complex of locally free sheaves, which is quasi-isomorphic to $B^{\bullet}$. 
Inductively, we can trim $B^{\bullet}$ down to degree $1$. 
On the other side, suppose $B^{j< 0}$ is the first term that is nonzero. 
Then, $d^{j}$ is injective fiberwise. Therefore, $\coker d^{j}$ is flat, thus locally free. 
Hence, we can replace $B^{j-1}$ by zero and $B^{j}$ by $\coker d^{j}$ to get a new complex of locally free sheaves. Inductively, $B^{\bullet}$ becomes a complex concentrated in degree $0$ and $1$, with cohomologies the deformation sheaf and the obstruction sheaf. 
Namely, we have the following exact sequence on $S_{X}(P,\delta)$
\begin{equation*}0\to \mathcal Def\to B^{0}\to B^{1}\to \mathcal Obs\to 0,\end{equation*}
where $B^{0}$ and $B^{1}$ are locally free.

Therefore, the moduli space admits a virtual fundamental class. 

\vskip45pt

%------------------------

\section{Examples}\label{examples}
\vskip35pt
In this section, we study examples of moduli spaces of dimension $1$ stable pairs over K3 surfaces. %\vskip10pt
Let $(X,\oo_{X}(1))$ be a polarized K3 surface, $P$ be a Hilbert polynomial of degree $1$, and $\delta$ be a positive polynomial of degree larger than $1$. Let $E_0$ be a fixed coherent sheaf over $X$. Then a pair $(E,\alpha) $, such that $P_{E}=P$, is stable if $E$ is pure and $\coker \alpha$ has dimension $0$, Lemma \ref{GenSurj}.

Let $H=c_{1}(\oo(1))\in H_{2}(X,\Z)$. Suppose the schematic support of $E$ has arithmetic genus $h$. There are two discrete invariants of $E$:\footnote{There is a slight abuse of notation about $\beta$ and $d$. But they are unlikely to cause confusions. }
\begin{equation}\label{chern-char}\beta_{h}=c_{1}(E)\in H_{2}(X,\Z)\quad \mbox{and} \quad \chi(E)=1-h+d.
\end{equation}
They are related to the Hilbert polynomial by $P_{E}(m)=(\beta_{h}.H)m+1-h+d$.
So, with the Hilbert polynomial fixed, there are only finitely many possible $\beta_{h}$'s. The moduli space decomposes as a disjoint union:
\begin{equation*}S_{X}^{E_{0}}(P,\delta)=\coprod_{\beta_{h}} S_{X}^{E_{0}}(\beta_{h},1-h+d)
\end{equation*}
where $S_{X}^{E_{0}}(\beta_{h},1-h+d)$ denote the moduli space of stable pairs satisfying conditions (\ref{chern-char}).

Let $C_{h}$ be a representative in the class $\beta_{h}$, then %every algebraic curve in the class $\beta_{h}$ is an element in 
the linear system 
%\begin{equation*}
 $|C_{h}|\cong \p^{h}$.
%\end{equation*} 
Let 
\[\mathcal C_{h}\subset |C_{h}|\times X\] 
be the universal curve.

When $E_{0}\cong \oo_{X}$, by \cite[Proposition B.8]{pandharipande-thomasiii}, 
\begin{equation*}
S_{X}^{\oo_{X}}(\beta_{h},1-h+d)\cong \mathcal C_{h}^{[d]}\end{equation*}
where $\mathcal C_{h}^{[d]}$ is the relative Hilbert scheme of points. 
If there is an ample line bundle $H$ such that
\begin{equation}\label{minimal-degree}C_h.H=\min\{L.H|L\in {\rm Pic}(X),\ L.H>0\},\end{equation}
then $S_{X}^{\oo_{X}}(\beta_{h},1-h+d)$ is a smooth scheme of dimension $h+d$, 
see \cite[Lemma 5.117, Lemma 5.175]{kawai-yoshioka} or \cite[Proposition C.2]{pandharipande-thomasiii}. 

The moduli space is not smooth in general for a higher rank $E_{0}$. For example, assume $E_{0}\cong \oo_{X}^{\oplus 2}$ and the stable pair $(E,\alpha:\oo_{X}^{\oplus 2}\to E)$ maps a summand $\oo_{X}$ to $0$. Then, the deformation space of this stable pair is 
\[\Hom(I^{\bullet},E)\cong \Hom(\oo_{X}\to E, E)\oplus H^{0}(E).\]
The dimension of $\Hom(\oo_{X}\to E, E)$ is $h+d$, while $h^{0}(E)$ may vary as $E$ varies. But when $d$ is large, we do expect the moduli space to be smooth for higher rank $E_{0}$.

\begin{prop}\label{higher-rank-smooth}Suppose $\beta_{h}$ is irreducible, i.e. $\beta_{h}$ is not a sum of two curve classes, and $d> 2h-2$. Then the moduli space $S_{X}^{\oo_{X}^{\oplus r}}(\beta_{h}, 1-h+d)$ is smooth of dimension $rd+(r-2)(1-h)+1$.
\end{prop}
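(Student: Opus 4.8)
The plan is to realize $S:=S_X^{\oo_X^{\oplus r}}(\beta_h,1-h+d)$ as a smooth fibration over the moduli space of its underlying sheaves. First I would observe that the hypothesis that $\beta_h$ is irreducible forces the underlying sheaf $E$ of every stable pair to be a \emph{stable sheaf}: $E$ is pure of dimension one, and any proper nonzero subsheaf $E'\subsetneq E$ has $c_1(E')$ nonzero, effective and $\le\beta_h$, hence $c_1(E')=\beta_h$ and $E/E'$ is zero-dimensional of positive length, so that $p_{E'}<p_{E}$. Consequently $\Hom(E,E)=k$ and, by Serre duality on the K3 surface ($\omega_X\cong\oo_X$), $\Ext^2(E,E)\cong\Hom(E,E)^\vee\cong k$; thus $E$ defines a point of the moduli space $M(v)$ of stable sheaves with Mukai vector $v=(0,\beta_h,1-h+d)$, which is smooth of dimension $\langle v,v\rangle+2=\beta_h^2+2=2h$, using the adjunction identity $\beta_h^2=2h-2$ on $X$. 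The flat family $\mathbb E$ underlying the universal pair of Theorem~\ref{MainThm} then induces a forgetful morphism $f\colon S\to M(v)$, $(E,\alpha)\mapsto E$.

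Next I would use the numerical hypothesis $d>2h-2=\beta_h^2$ to prove that $H^1(X,E)=0$ for every such $E$. By irreducibility of $\beta_h$ every member of $|\beta_h|$ is integral, so writing $E=i_*\mathcal F$ for the inclusion $i\colon C\hookrightarrow X$ of its schematic support — an integral curve of arithmetic genus $h$ — the sheaf $\mathcal F$ is rank-one torsion-free of degree $d$ on $C$, and $H^1(X,E)=H^1(C,\mathcal F)$ is Serre-dual on $C$ to the sections of a sheaf of degree $(2h-2)-d<0$ and therefore vanishes. Since $E$ is torsion while $\oo_X$ is torsion-free, $H^2(X,E)=\Hom(E,\oo_X)^\vee=0$ as well, so $h^0(E)=\chi(E)=1-h+d$ is constant over $M(v)$. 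This vanishing is exactly where the inequality $d>2h-2$ enters.

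The heart of the argument is to show that $f$ is a smooth morphism. Over a fixed $E$ the fibre consists of the maps $\alpha\colon\oo_X^{\oplus r}\to E$ with $\coker\alpha$ zero-dimensional — which by Lemma~\ref{GenSurj} is precisely the stability of the pair — taken modulo the automorphisms $\aut(E)=k^{\times}$ of Lemma~\ref{SmallLarge}, acting by scaling on $\Hom(\oo_X^{\oplus r},E)=H^0(E)^{\oplus r}$. Hence each fibre is an open subscheme of $\p(H^0(E)^{\oplus r})=\p^{\,r(1-h+d)-1}$, smooth of dimension $r(1-h+d)-1$. To see that $f$ itself is smooth I would verify the infinitesimal lifting criterion: given a square-zero extension $B\to A$ with ideal $J$, a $B$-family of sheaves extending a given $A$-family, and a lifting problem for the section, the obstruction to lifting $\alpha\colon\oo_X^{\oplus r}\to E$ to the deformed sheaf lies in $\Ext^1(\oo_X^{\oplus r},E\otimes_kJ)=H^1(X,E)^{\oplus r}\otimes_kJ=0$. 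Thus $\alpha$ always lifts, $f$ is smooth of relative dimension $r(1-h+d)-1$, and being smooth over the smooth base $M(v)$ the scheme $S$ is smooth of dimension $2h+\bigl(r(1-h+d)-1\bigr)=rd+(r-2)(1-h)+1$.

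As a consistency check, and to tie this to Theorem~\ref{deformation-obstruction}, I would compute the Zariski tangent space $\Hom(I^\bullet,E)$ directly from the exact triangle $E[-1]\to I^\bullet\to\oo_X^{\oplus r}\xrightarrow{\alpha}E$. Because $\Ext^{\ge1}(\oo_X^{\oplus r},E)=H^{\ge1}(X,E)^{\oplus r}=0$, applying $\Hom(-,E)$ yields the short exact sequence $0\to\Hom(\oo_X^{\oplus r},E)/k\,\alpha\to\Hom(I^\bullet,E)\to\Ext^1(E,E)\to0$, so that $\dim\Hom(I^\bullet,E)=(r(1-h+d)-1)+2h$, in agreement with the sum of the relative and base dimensions found above; the same sequence gives $\Ext^1(I^\bullet,E)\cong\Ext^2(E,E)\cong k$. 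The main obstacle is precisely the smoothness of $f$: the obstruction space $\Ext^1(I^\bullet,E)$ does \emph{not} vanish, and the content of the argument is that its sole contribution comes from deforming $E$ within the unobstructed $M(v)$, so that the relative obstruction for the pair — governed by $H^1(X,E)^{\oplus r}=0$ — is zero. Making the relative deformation theory and the descent to the coarse space $M(v)$ precise, together with the cohomological vanishing of the second step, is where the real work lies.
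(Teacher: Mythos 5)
Your proposal is correct, and it takes a genuinely different route from the paper's, while resting on the same two key inputs: irreducibility of $\beta_h$ forces the sheaf $E$ underlying any stable pair to be a stable sheaf, and $d>2h-2$ forces $H^1(X,E)=0$ by Serre duality on the integral support curve. The paper's proof is pointwise: it applies $\Hom(-,E)$ to the triangle $I^{\bullet}\to\oo_X^{\oplus r}\to E\to I^{\bullet}[1]$ and, using Lemma~\ref{no-higher-obstructions}, reads off that the tangent space $\Hom(I^{\bullet},E)$ has constant dimension $\chi(I^{\bullet},E)+1=rd+(r-2)(1-h)+1$ --- exactly the long exact sequence you relegate to a ``consistency check.'' Your proof is global: $S$ sits as an open subscheme of a $\p^{\,r(1-h+d)-1}$-bundle over the Mukai-smooth moduli space $M(v)$, with relative smoothness coming from the vanishing of $H^1(E)^{\oplus r}$. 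Your route buys something real: since the obstruction space $\Ext^1(I^{\bullet},E)\cong\Ext^2(E,E)\cong k$ does \emph{not} vanish, constancy of the tangent dimension alone does not formally exclude, say, a generically non-reduced moduli space; your fibration supplies the missing unobstructedness by absorbing the obstruction into the unobstructed deformations of $E$ on the K3 (equivalently, the forgetful map $\Ext^1(I^{\bullet},E)\to\Ext^2(E,E)$ carries the pair obstruction to the sheaf obstruction, which vanishes by Mukai's trace argument). It also yields the stronger structural statement that $S$ is open in a projective bundle over $M(v)$, recovering for $r=1$ the relative Hilbert scheme description quoted from Pandharipande--Thomas. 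What the paper's route buys is brevity and no contact with the coarse space $M(v)$. The coarse-versus-fine issue you flag is standard to repair: all sheaves involved are simple, so a quasi-universal family $\mathcal E$ exists \'etale-locally on $M(v)$; $\pi_{*}\mathcal E$ is locally free because $h^0(E)=1-h+d$ is constant (as $h^1=h^2=0$), and $\p\bigl((\pi_{*}\mathcal E)^{\oplus r}\bigr)$ is unchanged by twisting $\mathcal E$ by a line bundle pulled back from the base, so your bundle and the morphism $f$ are globally well-defined.
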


\begin{proof}Apply the functor $\Hom(-,E)$ to
\[I^{\bullet}\to \oo_{X}^{\oplus r}\to E\to I^{\bullet}[1].\]
According to Lemma \ref{no-higher-obstructions}, the associated long exact sequence is 
\begin{table*}[h]
\centering
\begin{tabular}{cccccccc c}
0 & $\to$ & $\Hom(E,E)$ & $\to$  & $H^{0}(X, E)^{\oplus r}$ & $\to$ & $\Hom(I^{\bullet},E)$ & $\to$ &\\
&&$\Ext^{1}(E,E)$ & $\to$  & $H^{1}(X, E)^{\oplus r}$ & $\to$ & $\Ext^{1}(I^{\bullet},E)$ & $\to$ & \\
&& $\Ext^{2}(E,E)$ & $\to$ & 0 &   &  &  & 
\end{tabular}
\end{table*}

Since $\beta_{h}$ is irreducible, $E$ is stable. Therefore, $\ext^{2}(E,E)=\hom(E,E)=1$. When $d>2h-2$, by Serre duality, $h^{1}(X,E)=h^{1}(C,E)=0$ where $C$ is the support of $E$.
Thus, the tangent space $\Hom(I^{\bullet},E)$ has constant dimension
%\begin{eqnarray*}
$\chi(I^{\bullet},E)+1 %& = & r\chi(E)-\chi(E,E)+1%\\
%&=& r(1-h+d)+2h-2+1
=rd+(r-2)(1-h)+1$.
%\end{eqnarray*}
\end{proof}

For every $h\geq 0$, there exists a K3 surface $X_{h}$ and a curve class $\beta_{h}\in H_{2}(X_{h},\Z)$, such that $\beta_{h}.\beta_{h}=2h-2$ and (\ref{minimal-degree}) is satisfied, see \cite[Remark 5.110]{kawai-yoshioka}. For each $h\geq 0$, we fix such $X_{h}$ and $\beta_{h}$.
%If the class $\beta_{h}=[C_{h}]$ is irreducible, then $C_{h}$ satisfies the condition (\ref{minimal-degree}). Under the condition (\ref{minimal-degree}), when $h=0$, the moduli space is simply $\p^{d}$. %is it
%For each $h\geq 1$, there is a polarized K3 surface $X_{h}$ and $\beta_{h}\in H_{2}(X_{h}, \Z)$ satisfying the condition (\ref{minimal-degree}).

Kawai and Yoshioka calculated the generating series of topological Euler characteristics of the moduli spaces \cite[Corollary 5.85]{kawai-yoshioka}.
\begin{customthm}{KY}[Kawai-Yoshioka]\label{kawai-yoshioka}For $0<|q|<|y|<1$, the generating series of topological Euler characteristics is
\begin{eqnarray*}
&&\sum_{h=0}^{\infty}\sum_{d=0}^{\infty}\chi_{\rm top}\big(S_{X_{h}}^{\oo}(\beta_{h},1-h+d)\big)q^{h-1}y^{1-h+d}\\
&=& \frac{1}{(y^{-1/2}-y^{1/2})^{2}q\prod_{n=1}^{\infty}(1-q^n)^{20}(1-q^ny)^2(1-q^ny^{-1})^2}.
\end{eqnarray*}
\end{customthm}

%align the exposition
\vskip10pt
Next, we consider stable pairs over $X_{h}$ of the form
\[\alpha:L_{h}\to E,\]
where $L_{h}$ is a line bundle with the first Chern class $c_{1}(L_{h})=l\beta_{h}$. Such a stable pair is equivalent to 
\[\oo_{X}\to E\otimes L^{-1}_{h}.\]
Notice that $c_{1}(E\otimes L^{-1}_{h})=\beta_{h}$ and $\chi(E\otimes L^{-1}_{h})=1-h+d-2l(h-1)$.
Therefore, 
\[S^{L_{h}}_{X_{h}}(\beta_{h},1-h+d)\cong S^{\oo_{X}}_{X_{h}}(\beta_{h},1-h+d-2l(h-1)).\]
If $\alpha\not= 0$, then $d\geq 2l(h-1)$. The generating series is
\begin{eqnarray*}
&& \sum_{h=0}^{\infty}\sum_{d=2l(h-1)}^{\infty}\chi_{\rm top}
\big(
S^{L_{h}}_{X_{h}}(\beta_{h},1-h+d)
\big)
q^{h-1}y^{d+1-h}\\
%&=&  \sum_{h=0}^{\infty}\sum_{d=2l(h-1)}^{\infty}\chi_{\rm top}
%\big(
%S^{\oo_{X}}_{X_{h}}(\beta_{h},1-h+d-2l(h-1))
%\big)
%q^{h-1}y^{d+1-h}\\
&=&\sum_{h=0}^{\infty}\sum_{d=0}^{\infty}\chi_{\rm top}
\big(
S^{\oo_{X}}_{X_{h}}(\beta_{h},1-h+d)
\big)
(qy^{2l})^{h-1}y^{d+1-h}\\
&=&\frac{1}{(y^{-1/2}-y^{1/2})^{2}qy^{2l}\prod_{n=1}^{\infty}(1-q^ny^{2nl})^{20}(1-q^ny^{2nl+1})^2(1-q^ny^{2nl-1})^2}
\end{eqnarray*}

%range of h, from?
\vskip10pt
Now, we consider stable pairs over $X_{h}$ of the form
\[\alpha:\bigoplus_{i} L_{i,h}\to E,\]
where $L_{i,h}$ is a line bundle with $c_{1}(L_{i,h})=l_{i}\beta_{h}$. 
The proof of Proposition~\ref{higher-rank-smooth} can also show that the moduli space  is smooth when $d$ is large compared to $l_{i}$ and $h$. 
Let $\mathbb G_{m}$ act on direct summands with distinct weights, then there is a natural $\mathbb G_{m}$-action on the moduli space $S^{\oplus L_{i,h}}_{X_{h}}(\beta_{h},1-h+d)$. A morphism $\oplus L_{i,h}\to E$ is fixed under the action if and only if exactly one summand $L_{i,h}$ is mapped to $E$ nontrivially. Thus, the fixed loci
\begin{equation*}
S^{\oplus L_{i,h}}_{X_{h}}(\beta_{h},1-h+d)^{\mathbb G_{m}}\cong \coprod_{i} S^{ L_{i,h}}_{X_{h}}(\beta_{h},1-h+d).
\end{equation*}
When $\alpha\not=0$, $d\geq \min\{2l_{i}(h-1)\}$. To calculate the Euler characteristics, we can use the localization formula, even when the moduli space is not smooth \cite{lawson-yau}. Then,
\begin{eqnarray*}
%&& \sum_{h=0}^{\infty}\sum_{d=2h-2}^{\infty}\chi_{\rm top}
&& \sum_{h}\sum_{d}\chi_{\rm top}
\big(
S^{\oplus L_{i,h}}_{X_{h}}(\beta_{h},1-h+d)
\big)
q^{h-1}y^{d+1-h}\\
&=&\sum_{i}\frac{1}{(y^{-1/2}-y^{1/2})^{2}qy^{2l_{i}}\prod_{n=1}^{\infty}(1-q^ny^{2nl_{i}})^{20}(1-q^ny^{2nl_{i}+1})^2(1-q^ny^{2nl_{i}-1})^2}.
\end{eqnarray*}
 
%\vskip20pt
%\subsection{Higher rank vector bundles}

%------------------------

\vskip45pt
\appendix
\section*{Appendix. The case $\deg \delta<\deg P$}\label{small-delta}
\vskip35pt

This appendix contains the proofs of parallel statements when $\deg \delta<\deg P$ and a remark on critical values of $\delta$.

\begin{proof}[Proof of Lemma~\ref{mu-min}]
Take the Harder-Narasimhan filtration $\{F_{t}\}_{1\leq t\leq l}$ of $E$ with respect to slope. If the induced map 
\begin{equation*}E_{0}\to F_{l}/F_{l-1}=:\gr_{l}E\end{equation*} is nonzero, then the argument for the case where $\deg\delta\geq d$ works here. 
Otherwise, $\im\alpha\subset F_{l-1}$. Thus, by semistability, 
\[\frac{\delta}{r(F_{l-1})}+p_{F_{l-1}}\leq \frac{\delta}{r(E)}+p_{E}. \]
Therefore, $p_{F_{l-1}}\leq p_{E}$, which in turn implies that $p_{\gr_{l}E}\geq p_{E}$. Thus, 
\[\mu_{\min}(E)=\mu(\gr_{l}E)\geq \mu(E).\]
Therefore, $\mu_{\min}(E)$ is bounded below by a constant determined by $P$ and $X$.
\end{proof}

In the construction of the moduli space, we need to replace Lemma~\ref{EquiSS2} by the following lemma.
\begin{lem}\label{EquiSS1}Fix $P$ and $\delta$ with $\deg\delta< \deg P$. Then there is an $m_{0}\in \Z_{>0}$, such that for any integer $m\geq m_{0}$ and any pair $(E,\alpha)$, where $E$ is a pure with $P_{E}=P$ and multiplicity $r(E)=r$, the following assertions are equivalent.
\begin{enumerate}[i)]
\item The pair $(E,\alpha)$ is stable.
\item $P_{E}(m)\leq h^{0}(E(m))$, and for any nontrivial proper sub-pair $(G,\alpha^{\prime})$ with $G$ of multiplicity $r(G)$,
\[\frac{h^{0}((G,\alpha^{\prime})(m))}{r(G)}< p_{(E,\alpha)}(m).\]
\item For any proper quotient pair $(F,\alpha^{\prime\prime})$ with $F$ of dimension $d$ and multiplicity $r(F)$,
\[\frac{h^{0}((F,\alpha^{\pprime})(m))}{r(F)}> p_{(E,\alpha)}(m).\]
\end{enumerate}
Here, 
\[h^{0}((G,\alpha^{\prime})(m)) =h^0(G(m))+\epsilon(\alpha^{\prime})\delta(m),\] 
and $h^{0}((F,\alpha^{\pprime})(m))$ has a similar meaning.
\end{lem}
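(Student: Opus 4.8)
The plan is to establish the cycle of implications $i)\Rightarrow ii)\Rightarrow iii)\Rightarrow i)$ in parallel with Lemma~\ref{EquiSS2}, fixing $m_{0}$ non-explicitly along the way. The one structural change forced by $\deg\delta<d$ is that Lemma~\ref{ReIntSS} is no longer available, so instead of the normalization by $2r-\epsilon$ I compare the reduced quantities $h^{0}((G,\alpha^{\prime})(m))/r(G)$ directly; correspondingly the $\delta$-contributions $\epsilon(\alpha^{\prime})\delta(m)/r(G)$ are now of strictly lower order than the leading term of $P(m)/r$, which simplifies some comparisons and complicates others. The boundedness input, Proposition~\ref{Bdd}, is still valid here, because the lower bound on $\mu_{\min}(E)$ for every $E$ underlying a semistable pair is furnished by the small-$\delta$ proof of Lemma~\ref{mu-min} given above.

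For $i)\Rightarrow ii)$ I would use Proposition~\ref{Bdd} to choose $m_{0}$ so that $h^{0}(E(m))=P(m)$ and $\mu_{\max}(E)\le\mu$ for every semistable $E$ and all $m\ge m_{0}$. Given a proper sub-pair $(G,\alpha^{\prime})$, I bound $h^{0}(G(m))/r(G)$ by applying Simpson's estimate (Lemma~\ref{SimpsonH0}) to the slope Harder--Narasimhan factors of $G$, in terms of $\mu_{\max}(G)\le\mu$ and $\nu=\mu_{\min}(G)$, exactly as in (\ref{1to2BySimpson}). For $\nu$ below a sufficiently negative threshold $\nu_{0}$, this bound drops below $P(m)/r$ already in the coefficient of $m^{d-1}$, by a margin that can be made to absorb the correction $\epsilon(\alpha)\delta(m)/r-\epsilon(\alpha^{\prime})\delta(m)/r(G)$, which has degree $\le d-1$; this yields $ii)$ for such $G$ after enlarging $m_{0}$ as in (\ref{1to2}). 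For $\nu>\nu_{0}$ the family of such $G$ is bounded by Theorem~\ref{GrSlope}---after passing to the saturation $\bar G$ of $G$ in $E$, which preserves $r(G)$ and, for $m\gg0$, does not decrease $h^{0}((G,\alpha^{\prime})(m))$, so that $E/\bar G$ is pure---so only finitely many Hilbert polynomials occur (the degenerate case $\bar G=E$, where $E/G$ is torsion and $r(G)=r$, being handled directly by $h^{0}(G(m))<P(m)$ for $m\gg0$); enlarging $m_{0}$ once more turns the strict polynomial inequality $p_{(G,\alpha^{\prime})}<p_{(E,\alpha)}$ supplied by stability into its evaluation at $m$, which is $ii)$.

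The implication $ii)\Rightarrow iii)$ is formal. A proper dimension-$d$ quotient pair $(F,\alpha^{\pprime})$ sits in a short exact sequence of pairs $0\to(G,\alpha^{\prime})\to(E,\alpha)\to(F,\alpha^{\pprime})\to0$; left-exactness of global sections together with the additivities $r=r(G)+r(F)$ and $\epsilon(\alpha)=\epsilon(\alpha^{\prime})+\epsilon(\alpha^{\pprime})$ gives $h^{0}((F,\alpha^{\pprime})(m))\ge h^{0}((E,\alpha)(m))-h^{0}((G,\alpha^{\prime})(m))$. Since $P(m)\le h^{0}(E(m))$ forces $p_{(E,\alpha)}(m)\le h^{0}((E,\alpha)(m))/r$, condition $ii)$ yields $h^{0}((G,\alpha^{\prime})(m))/r(G)<h^{0}((E,\alpha)(m))/r$, and the mediant inequality then produces
\[\frac{h^{0}((F,\alpha^{\pprime})(m))}{r(F)}\ge\frac{h^{0}((E,\alpha)(m))-h^{0}((G,\alpha^{\prime})(m))}{r-r(G)}>\frac{h^{0}((E,\alpha)(m))}{r}\ge p_{(E,\alpha)}(m),\]
which is $iii)$.

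The implication $iii)\Rightarrow i)$ is where the real work lies, and I expect the bootstrapping of boundedness to be the main obstacle, since a priori $iii)$ constrains an unbounded family of $E$. To bound it I take the slope Harder--Narasimhan filtration of $E$, let $F$ be its minimal factor with $\mu(F)=\mu_{\min}(E)=:\mu^{\pprime}$, and apply $iii)$ to the induced quotient pair $(F,\alpha^{\pprime})$ (if $E$ is slope-semistable then $\mu_{\min}(E)=\mu(E)$ is already fixed by $P$). Bounding $h^{0}(F(m))$ above by Lemma~\ref{SimpsonH0} and $P(m)/r$ below by $(m-A)^{d}/d!$, and using that the $\delta$-terms have degree $<d$, the inequality of $iii)$ forces $\mu^{\pprime}\ge -A-C$ upon comparing the coefficients of $m^{d}$ and $m^{d-1}$; hence $\mu_{\min}(E)$ is bounded below and the family is bounded by Theorem~\ref{BddMuMax}. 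I then enlarge $m_{0}$ via Lemma~\ref{HNGrBdd} so that, for all such $E$, their Harder--Narasimhan factors as pairs, and all the dimension-$d$ quotient pairs in play, section counts compute Hilbert polynomials and polynomial inequalities are equivalent to their evaluations at $m$. If the Harder--Narasimhan filtration of the pair (Proposition~\ref{HN}) had length $\ge2$, its last factor would be a proper dimension-$d$ quotient pair with $p_{\gr_{l}}<p_{(E,\alpha)}$, contradicting $iii)$; so $(E,\alpha)$ is semistable. Finally, were it strictly semistable, a Jordan--H\"older factor of reduced Hilbert polynomial $p_{(E,\alpha)}$ would give a proper dimension-$d$ quotient pair realizing equality, again in a bounded family, contradicting the \emph{strict} inequality of $iii)$; thus $(E,\alpha)$ is stable. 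Replacing strict by weak inequalities throughout yields the semistable version.
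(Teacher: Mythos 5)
Your proof is correct and follows essentially the same route as the paper's appendix proof: the same cycle i) $\Rightarrow$ ii) $\Rightarrow$ iii) $\Rightarrow$ i), with Simpson's estimate and a $\nu_{0}$-threshold dichotomy for sub-pairs, the mediant inequality for the quotient step, and a $\mu_{\min}$ bound extracted from iii) feeding into boundedness and a Harder--Narasimhan/Jordan--H\"older contradiction. Your only deviations are refinements of detail---choosing $\nu_{0}$ negative enough to absorb the degree-$(d-1)$ contribution of $\delta$ rather than merely enlarging $m$, and explicitly treating the edge cases ($\bar G=E$, slope-semistable $E$, and strictly semistable pairs, where the last HN factor is not a proper quotient)---which the paper glosses over, so if anything your write-up is slightly more airtight.
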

\begin{proof}
A large part of the proof is the same as that of Lemma~\ref{EquiSS2}. Again, the proof will proceed as follows: $i)\Rightarrow ii)\Rightarrow iii)\Rightarrow i)$.
\vskip6pt
i) $\Rightarrow$ ii):
With the same notation, we have the inequalities (\ref{1to2BySimpson}) and (\ref{1to2}). 
Therefore, when $\nu\leq\nu_{0}$, we can further enlarge $m_1$ such that $\forall m\geq m_1$\[\frac{1}{d!}\big((1-\frac{1}{r})([\mu+m+B]_{+})^{d}+\frac{1}{r}([\nu+m+B]_{+})^{d}\big)+\frac{\epsilon(\alpha^{\prime})\cdot\delta(m)}{r(G)}<\frac{P(m)}{r}+\frac{\epsilon(\alpha)\delta(m)}{r}.\]
This is because there are only finitely many choices for $\epsilon(\alpha^{\prime})/r(G)$. Hence, for $m\geq m_1$ and $\nu\leq\nu_{0}$,
\[\frac{h^{0}((G,\alpha^{\prime})(m))}{r(G)}<p_{(E,\alpha)}(m).\]

When $\nu>\nu_{0}$, by the same argument as in the proof of Lemma~\ref{EquiSS2}, we can enlarge $m_{0}$ again, if necessary, such that for $m\geq m_{0}$, 
$P_{G}(m)=h^{0}(G(m))$
 and
 \[p_{(G,\alpha^{\prime})}< p_{(E,\alpha)} \iff p_{(G,\alpha^{\prime})}(m)< p_{(E,\alpha)}(m).\]
Therefore, either $\nu\leq\nu_{0}$ or $\nu>\nu_{0}$,
\[\frac{h^0((G,\alpha^{\prime})(m))}{r(G)}< p_{(E,\alpha)}(m).\]
\vskip6pt
ii) $\Rightarrow$ iii): Also by studying the exact sequence (\ref{2to3}),
\begin{eqnarray*}\frac{h^{0}(F(m))+\epsilon(\alpha^{\pprime})\cdot\delta(m)}{r(F)}> \frac{h^{0}(E(m))+\epsilon(\alpha)\cdot\delta(m)}{r}\geq p_{(E,\alpha)}(m).\end{eqnarray*}

\vskip6pt
iii) $\Rightarrow$ i): We also have the inequality (\ref{3to1}).
Denote by $(K,\beta)$ the induced quotient pair. (Here, we change the notation.) By the hypothesis and (\ref{3to1}),
\begin{equation*}
\frac{P(m)+\epsilon(\alpha)\delta(m)}{r(E)} < \frac{1}{d!}([\mu(K)+m+C]_{+})^{d}+\frac{\epsilon(\beta)\delta(m)}{r(K)},
\end{equation*}
For large $m$, the right hand becomes an polynomial in $m$. The leading coefficients of both sides are the same. By considering the second coefficients, we deduce that $\mu_{\min}(E)$ is bounded below. Thus, $\mu_{\max(E)}$ is bounded above. Therefore, by Theorem~\ref{BddMuMax}, the family of pure sheaves $E$ satisfying the hypothesis for large $m$ is bounded.

If $(E,\alpha)$ is not stable, we denote by $(F,\alpha^{\pprime})$ the last Harder-Narasimhan factor of such a pair $(E,\alpha)$, which is a proper quotient. Then
\[p_{(F,\alpha^{\pprime})}\leq p_{(E,\alpha)}.\] 
By Theorem~\ref{GrSlope}, the family of these $F$'s is bounded. Thus, enlarge $m_{0}$ if necessary,
 \[h^0(F(m))=P_F(m)\quad \mbox{and}\quad h^0(E(m))=P(m),\quad\forall m\geq m_0.\] 
 Then
 \[\frac{h^0(F(m))+\epsilon(\alpha^{\pprime})\delta(m)}{r(F)}\leq p_{(E,\alpha)}(m),\]
 contradicting the hypothesis. So, $(E,\alpha)$ is stable.
 \end{proof}
Semistability can be characterized similarly, replacing the two strong inequalities by  weak inequalities.
\vskip8pt
In defining the $\SL(V)$-linearized line bundle $L$, let 
\[\frac{n_{1}}{n_{2}}=P(l)\frac{\delta(m)}{P(m)+\delta(m)}-\delta(l)\frac{P(m)}{P(m)+\delta(m)}.\]
This number is positive for $l$ large enough, due to the degree condition.
\begin{proof}[Proof of Proposition~\ref{gitss-semistable}]
Suppose $[\sigma]\times [q]$ is GIT-semistable. 
With the same notations as in the case where $\deg\delta\geq d$ and the new assignment of $n_{1}/n_{2}$,
\begin{eqnarray*}
r(E)-r''\geq r(G)\geq\frac{\dim W}{\dim V}\cdot\frac{r(E)P(m)}{P(m)+\delta(m)}+\epsilon_W(\sigma)\frac{r(E)\delta(m)}{P(m)+\delta(m)}.
\end{eqnarray*}Recall that $P(m)=\dim V$. Therefore,
\begin{eqnarray*}
\frac{\dim V-\dim W+(1-\epsilon_{W}(\sigma))\delta(m)}{r''}\geq\frac{P(m)+\epsilon(\alpha)\delta(m)}{r(E)}.
\end{eqnarray*}
By the same argument as before, if $\alpha^{\pprime}=\pi\circ\phi\circ\alpha=0$, then $\im\sigma\subset W$. From the last inequality and (\ref{GEF''}), we get
\[\frac{h^0(F''(m))+\epsilon(\alpha^{\pprime})\delta(m)}{r''}\geq \frac{P(m)+\epsilon(\alpha)\delta(m)}{r(E)}.\]
By the same argument, replacing Lemma~\ref{EquiSS2} by Lemma~\ref{EquiSS1}, we deduce that $(E,\alpha)$ is semistable.

Next, we assume that $(E,\alpha)$ is stable, and $q(m)$ induces an isomorphism between global sections. For any subspace $0\not= W< V$, let $G=q( W\otimes\oo(-m))$ and $(G,\alpha^{\prime})$ the corresponding sub-pair. If $(G,\alpha^{\prime})=(E,\alpha)$, the inequality (\ref{WG-polynomial}) holds. Assume that $(G,\alpha^{\prime})$ is a proper sub-pair. According to Lemma~\ref{EquiSS1}, we have
\[\frac{h^{0}(G(m))+\epsilon(\alpha^{\prime})\delta(m)}{r(G)}< \frac{h^{0}(E(m)+\epsilon(\alpha)\delta(m))}{r},\]
which gives (the inverses of) the coefficients of two sides in the inequality (\ref{WG-polynomial}). Thus, $([\sigma],[q])$ is stable in the GIT sense.

What is left to be proven is that, if $(E,\alpha)$ is a strictly semistable pair, then a point $[\sigma]\times [q]$ in $\bar Z$ with associated $(E,\alpha)$ is strictly semistable in the GIT sense. Suppose $(G,\alpha^{\prime})$ is a destabilizing sub-pair, let 
\[W=H^0(G(m))\subset H^0(E(m))\cong V.\]
Then
\[\epsilon(\alpha^{\prime})=\epsilon_{W}(\sigma).\]
 It is enough to show that
\[P_G=\Big(P(l)\frac{\delta(m)}{P(m)+\delta(m)}-\delta(l)\frac{P(m)}{P(m)+\delta(m)}\Big)
\Big(\epsilon_{W}(\sigma)-\frac{\dim W}{\dim V}\Big)
+P(l)\frac{\dim W}{\dim V}.\]
By our choice of $m$, the right hand side equals
\[(P+\delta)\cdot\frac{\dim W+\epsilon_W(\sigma)\delta}{P(m)+\delta(m)}-\epsilon_W(\sigma)\delta=(P+\delta)\frac{r(G)}{r(E)}-\epsilon(\alpha^{\prime})\delta=P_{G}.\]

Thus, we have finished proving the cases where $\deg \delta<d$.
\end{proof}

\vskip8pt
\noindent{\it Remark.} Fix the smooth projective variety $(X,\oo_{X}(1))$, coherent sheaf $E_{0}$ and the Hillbert polynomial $P$. When stability condition $\delta$ varies, the moduli space $S_{X}(P,\delta)$ also undergoes some changes. 

A $\delta$ is {\it regular} if there are two polynomials $\delta_{1}$ and $\delta_{2}$, such that $0<\delta_{1}<\delta<\delta_{2} $ and for any $\delta_{0}\in(\delta_{1},\delta_{2})$, the set of $\delta_{0}$-semistable pairs of type $P$ remains constant. Otherwise, $\delta$ is called {\it critical}. We have the following statement similar to \cite[Theorem 4.2]{MR1644040}:
\begin{prop}Fix $(X,\oo_{X}(1))$, $E_{0}$ and $P$. There are only finitely many critical values and they are all of degree $<\deg P$.
\end{prop}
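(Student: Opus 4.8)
The plan is to identify the set of critical values with the set of \emph{walls}, by which I mean those $\delta$ admitting a non-degenerate strictly $\delta$-semistable pair of type $P$. Exactly as in \cite[Theorem 4.2]{MR1644040}, the defining comparisons $p_{(E',\alpha')}$ versus $p_{(E,\alpha)}$ are affine functions of $\delta$, so in the lexicographic order they can change sign only at a value where some pair is strictly semistable; hence off the walls the set of $\delta$-semistable pairs is locally constant, and every critical $\delta$ is a wall. It therefore suffices to show that there are only finitely many walls and that each has $\deg\delta<\deg P$.

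At a wall, fix a non-degenerate strictly semistable pair $(E,\alpha)$ and let $(F_1,\alpha_1)=\gr_1$ be the first term of its Jordan--H\"older filtration. Since the pair is strictly semistable this is a proper sub-pair, it is stable, and $p_{(F_1,\alpha_1)}=p_{(E,\alpha)}$; writing this out gives the relation
\[p_{F_1}+\frac{\epsilon(\alpha_1)\,\delta}{r(F_1)}=p_E+\frac{\epsilon(\alpha)\,\delta}{r},\]
which is affine in $\delta$. Solving for $\delta$ yields
\[\delta=\frac{r\,P_{F_1}-r(F_1)\,P}{\epsilon(\alpha)\,r(F_1)-\epsilon(\alpha_1)\,r}.\]
For a non-degenerate pair $\epsilon(\alpha)=1$, and the denominator never vanishes on a proper $F_1$: if $\epsilon(\alpha_1)=0$ it equals $r(F_1)>0$, while if $\epsilon(\alpha_1)=1$ then $r(F_1)=r$ would force $p_{F_1}=p_E$, hence $P_{F_1}=P$ and $F_1=E$, which is excluded. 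Thus each admissible datum $(P_{F_1},r(F_1),\epsilon(\alpha_1))$ pins down a single $\delta$.

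The heart of the matter is to bound these data. The sheaf $E$ underlies a semistable pair, so by Proposition~\ref{Bdd} it lies in a bounded family whose bound is uniform in $\delta$; in particular $\mu_{\max}(F_1)\le\mu_{\max}(E)$ is bounded above because $F_1\subset E$. On the other hand $(F_1,\alpha_1)$ is itself a semistable pair of multiplicity $r(F_1)\le r$, so Lemma~\ref{mu-min} bounds $\mu_{\min}(F_1)$ from below; as stressed there, this bound is independent of $\delta$ and, inspecting the proof, depends only on $X$, $E_0$, $d$ and the multiplicity. With $\mu(F_1)$ bounded below — uniformly in $\delta$ — and $\mu_{\max}(F_1)$ bounded above, the sub-sheaves $F_1$ (saturated so that their quotients are pure) form a bounded family by Theorem~\ref{GrSlope}; hence only finitely many Hilbert polynomials $P_{F_1}$ can occur. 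Since also $r(F_1)\in\{1,\dots,r\}$ and $\epsilon(\alpha_1)\in\{0,1\}$, the admissible data are finite and there are finitely many walls.

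The degree statement then drops out: in the numerator $r\,P_{F_1}-r(F_1)\,P$ the two leading terms are both $r\,r(F_1)\,m^d/d!$ and cancel, because $F_1$ and $E$ share the dimension $d$; hence $\deg(r\,P_{F_1}-r(F_1)\,P)<d$ and every wall satisfies $\deg\delta<\deg P$. This is consistent with the Lemma that no non-degenerate strictly semistable pair exists once $\deg\delta\ge\deg P$, which by itself already confines the walls to the region $\deg\delta<\deg P$. I expect the only real obstacle to be the uniform boundedness in the third step: everything rests on the $\delta$-independence of the lower bound for $\mu_{\min}$ from Lemma~\ref{mu-min}, for it is precisely this that keeps the destabilizing sub-pairs — and with them the coefficients of the candidate walls — from running off to infinity as $\delta$ grows.
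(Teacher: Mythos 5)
Your overall architecture---identify critical values with walls, solve the wall equation for $\delta$, check the denominator cannot vanish, bound the destabilizing data, and read the degree bound off the cancellation of leading terms---is the same as the paper's, and your steps 1, 2 and 4 are correct. The gap is in the third step, exactly where you yourself locate the weight of the argument. Lemma~\ref{mu-min} is stated for pairs with the \emph{fixed} Hilbert polynomial $P$, and its proof in the regime $\deg\delta<\deg P$ (the appendix proof, which is the operative one here, since all walls live in that regime) does \emph{not} produce a constant depending only on $X$, $E_{0}$, $d$ and the multiplicity. That proof has two cases: if the composition $E_{0}\to E\to \gr_{l}E$ to the last slope-Harder--Narasimhan factor is nonzero, one gets the geometric bound you have in mind; otherwise (in particular whenever the map to the sheaf is zero) it yields only $\mu_{\min}(E)\geq\mu(E)$, i.e.\ the ``constant'' is the slope of the sheaf itself, which is pinned down by its full Hilbert polynomial, not merely by its multiplicity. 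Applied to your sub-pair $(F_{1},\alpha_{1})$, whose Hilbert polynomial $P_{F_{1}}$ is precisely the unknown you are trying to confine to finitely many values, this is circular: in the second case you learn only $\mu_{\min}(F_{1})\geq\mu(F_{1})$, hence the vacuous $\mu(F_{1})\geq\mu(F_{1})$. And the second case is unavoidable: the first Jordan--H\"older factor may well have $\alpha_{1}=0$ (the section need not land in $F_{1}$), and even when $\alpha_{1}\neq0$ nothing forces $\im\alpha_{1}$ to hit the last Harder--Narasimhan piece of $F_{1}$ once $\deg\delta<d$. So the uniformity you invoke (``depends only on $X$, $E_{0}$, $d$ and the multiplicity'') is exactly what fails; $\delta$-independence is not the issue, Hilbert-polynomial-independence is.

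The repair is what the paper does: treat the two cases by different means instead of citing Lemma~\ref{mu-min} as a black box for the sub-pair. If $\alpha_{1}=0$, the wall equation itself gives $p_{F_{1}}=p_{E}+\delta/r\geq p_{E}$, so $\mu(F_{1})\geq\mu(E)$, a constant determined by $P$. If $\alpha_{1}\neq0$, bound the image rather than $F_{1}$ directly: $G=\im\alpha_{1}$ is a nonzero, hence pure $d$-dimensional, quotient of $H^{0}(E_{0}(m))\otimes\oo_{Y}(-m)$ (with $E_{0}(m)$ globally generated and $Y$ the support), so $\mu(G)\geq \mu_{\min}(\oo_{Y})-m$ is bounded below by a constant depending only on $X$, $E_{0}$, $r$ and $d$ by Lemma~\ref{MuMin}; then semistability of the pair $(F_{1},\alpha_{1})$ applied to its sub-pair $(G,\alpha_{1})$ gives $p_{G}+\delta/r(G)\leq p_{F_{1}}+\delta/r(F_{1})$, and since $r(G)\leq r(F_{1})$ this forces $p_{G}\leq p_{F_{1}}$, whence $\mu(F_{1})\geq\mu(G)$ uniformly in $\delta$. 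With this lower bound in hand (and with $E/F_{1}$ automatically pure), Theorem~\ref{GrSlope} applied over the bounded family of Proposition~\ref{Bdd} gives the finiteness of the possible $P_{F_{1}}$, and the rest of your argument goes through unchanged.
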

\begin{proof}Suppose $\delta $ is critical. Let $(E,\alpha)$ be a strictly $\delta$-semistable pair. Then there is a proper sub-pair $(E^{\prime},\alpha^{\prime})$ such that
\[p_{E}+\frac{\delta}{r(E)}=p_{E^{\prime}}+\frac{\epsilon(\alpha^{\prime})\delta}{r(E^{\prime})}.\]
Therefore, each critical $\delta$ has the following form
\begin{equation}\label{critical-delta}\delta= \frac{r(E^{\prime})P_{E}-r(E)P_{E^{\prime}}}{\epsilon(\alpha^{\prime})r(E)-r(E^{\prime})}.\end{equation}

If $\alpha^{\prime}=0$, then $p_{E^{\prime}}> p_{E}$. Thus, $\mu(E^{\prime})$ is bounded below by the constant $ \mu(E)$ determined by $P$.
If $\alpha^{\pprime}\not=0$, then there is a nonzero map $\alpha^{\prime}:E_{0}\to E^{\prime}$. The proof of Lemma~\ref{mu-min} also shows that $\mu(E^{\prime})$ is bounded below by a constant determined by $X$ and $P$. Notice that $E/E^{\prime}$ is pure, since $(E,\alpha)$ is semistable. Therefore, such destabilizing sub-pairs $(E^{\prime},\alpha^{\prime})$ form a bounded family, according to Theorem~\ref{GrSlope}. So, there are only finitely many such $P_{E^{\prime}}$'s. Hence, there are only finitely many $\delta$'s of the form (\ref{critical-delta}).
\end{proof}

\medskip
%--------------------
%\newpage

\vskip20pt

\quad\quad\quad\textsc{Department of Mathematics, Northeastern University}
\vskip6pt
\indent\quad\quad\quad{\it Email:} \href{mailto:lin.yinb@husky.neu.edu}{\texttt{lin.yinb@husky.neu.edu}}
\end{document}